\documentclass[11pt]{amsart}
%     
%%%%%%%%%%%%%%%%% Packages %%%%%%%%%%%%%%%%%%%%%%%%%%%%
%\usepackage{ifthen}
\usepackage{amsmath,amssymb}
\usepackage{framed}

\usepackage{fullpage}
\usepackage[dvips]{graphicx}
\usepackage{amsfonts,amsthm,url,tabularx, array}
\usepackage{enumerate,verbatim,float} %R
\usepackage{tikz} 
\usepackage{complexity}

\usepackage{thmtools}%R
\usetikzlibrary{arrows} %R
 %R
\def\COMMENT#1{}
\def\TASK#1{}
\def\noproof{{\unskip\nobreak\hfill\penalty50\hskip2em\hbox{}\nobreak\hfill%
        $\square$\parfillskip=0pt\finalhyphendemerits=0\par}\goodbreak}
\def\endproof{\noproof\bigskip}
\newdimen\margin   % needed for macros \textdisplay & \ltextdisplay
\def\textno#1&#2\par{%
    \margin=\hsize
    \advance\margin by -4\parindent
           \setbox1=\hbox{\sl#1}%
    \ifdim\wd1 < \margin
       $$\box1\eqno#2$$%
    \else
       \bigbreak
       \hbox to \hsize{\indent$\vcenter{\advance\hsize by -3\parindent
       \sl\noindent#1}\hfil#2$}%
       \bigbreak
    \fi}

\def\eps{\varepsilon}

\def\LL{\mathcal{L}}

\newtheorem{thm}{Theorem}
\newtheorem{fact}[thm]{Fact}
\newtheorem{lemma}[thm]{Lemma}
\newtheorem{prop}[thm]{Proposition}
\newtheorem{cor}[thm]{Corollary}

\newtheorem*{claim}{Claim}

\newtheorem{obs}[thm]{Observation}
\newtheorem*{problem}{Problem}
\newtheorem*{adef}{Definition}
\def\COMMENT#1{}
\let\COMMENT=\footnote% COMMENT OUT for clean output

\declaretheoremstyle[notefont=\bfseries,notebraces={}{},%
    headpunct={},postheadspace=1em]{mystyle}
\declaretheorem[style=mystyle,numbered=no,name=Theorem]{thm-hand}

\newcommand{\leqfptT}{ $\leq^{\textup{fpt}}_{\textup{T}}$ }
\newcommand{\paramcount}[1]{\textup{\textbf{p-\#}}\textsc{#1}}
\newcommand{\paramdec}[1]{\textup{\textbf{p-}}\textsc{#1}}
\newcommand{\size}[1]{\textrm{size}(#1)}
\newcommand{\countprob}{\textsc{\#Given-Size $\LL$-Free Subset}}

\DeclareMathOperator{\opt}{opt}

\allowdisplaybreaks

\title{On the complexity of finding and counting solution-free sets of integers}
\author{Kitty Meeks and  Andrew Treglown}
\thanks{The first author is supported by a Personal Research Fellowship from the Royal Society of Edinburgh, funded by the Scottish Government, and the second author is supported by EPSRC grant EP/M016641/1.}
\begin{document}
\label{firstpage}

\begin{abstract} 
Given a linear equation $\LL$, a set $A $ of integers
is $\LL$-free if $A$ does not contain any `non-trivial' solutions to $\LL$. This notion incorporates many central topics in combinatorial number theory such as sum-free and progression-free sets.
In this paper we initiate the study of (parameterised) complexity questions involving $\LL$-free sets of integers. The main questions we consider involve deciding whether  a finite set of integers $A$ has an $\LL$-free subset of a given size, and counting all such $\LL$-free subsets. We also raise a number of open problems.
\end{abstract}
\date{\today}
\maketitle

\section{Introduction}

Sets of integers which do not contain any solutions to some linear equation have received a lot of attention in the field of combinatorial number theory.  Two particularly well-studied examples are \emph{sum-free sets} (sets avoiding solutions to the equation $x + y = z$) and \emph{progression-free sets} (sets that do not contain any 3-term arithmetic progression $x,y,z$ or equivalently avoid solutions to the equation $x + z = 2y$).  A lot of effort has gone into determining the size of the largest solution-free subset of $\{1,\ldots,n\}$ and other sets of integers, and into computing (asymptotically) the number of (maximal) solution-free subsets of $\{1,\ldots,n\}$.  

In this paper we initiate the study of the computational complexity of problems involving solution-free subsets.  We are primarily concerned with determining the size of the largest subset of an arbitrary set of integers $A$ which avoids solutions to a specified linear equation $\LL$; in particular, we focus on  sum-free and progression-free sets, but many of our results also generalise to larger families of linear equations.  For suitable equations $\LL$, we demonstrate that the problem of deciding whether $A$ contains a solution-free subset of size at least $k$ is $\NP$-complete (see Section~\ref{sec:decision});  we further show that it is hard to approximate the size of the largest solution-free subset within a factor $(1+\epsilon)$ (see Section~\ref{sec:ap}), or to determine for a constant $c < 1$ whether $A$ contains a solution-free subset of size at least $c|A|$ (see Section~\ref{sec:6}).  On the other hand, in Section~\ref{sec:param-dec} we see that the decision problem is fixed-parameter tractable when parameterised by either the cardinality of the desired solution-free set, or by the number of elements of $A$ we can exclude from such a set.  We also consider the complexity, with respect to various parameterisations, of counting the number of solution-free sets of a specified size (see Section~\ref{sec:counting}): while there is clearly no polynomial-time algorithm in general, the problem is fixed-parameter tractable when parameterised by the number of elements we can exclude from $A$; we show that there is unlikely to be a fixed-parameter algorithm to solve the counting problem exactly when the size of the solution-free sets is taken as the parameter, but we give an efficient approximation algorithm for this setting.  Finally, in Section~\ref{sec:ex} we consider all of these questions in a variant of the problem, where we specify that a given solution-free subset $B \subset A$ must be included in any solution.

Many of our results are based on the fact that we can set up polynomial-time reductions in both directions between our problem and (different versions of) the well-known hitting set problem for hypergraphs.  We also derive some new lower-bounds on the size of the largest solution-free subset of an arbitrary set of integers for certain equations $\LL$, which may be of independent interest.

In the remainder of this section, we give some background on solution-free sets in Section~\ref{sec:background} and review the relevant notions from the study of computational complexity in Section~\ref{sec:complexity}.

\subsection{Background on solution-free sets}
\label{sec:background}

Consider a fixed linear equation
$\LL$ of the form 
\begin{align}\label{Leq}
a_1x_1+\dots +a_\ell x_\ell =b
\end{align}
where $a_1,\dots, a_\ell,b \in \mathbb Z$.
We say that $\LL$ is \emph{homogeneous} if $b=0$.
If 
$$\sum _{i \in [k]} a_i=b=0$$ then we say that $\LL$ is \emph{translation-invariant}. (Here $[k]$ denotes the set $\{1,\ldots,k\}$.)
Let $\LL$ be translation-invariant. Then notice that $(x,\dots, x)$ is a `trivial' solution of (\ref{Leq}) for any $x$.
More generally, a solution $(x_1, \dots , x_k)$ to $\LL$ is said to be \emph{trivial} if there exists a partition $P_1, \dots ,P_\ell$ of $[k]$ so that: 
\begin{itemize}
\item[(i)] $x_i=x_j$ for every $i,j$ in the same partition class $P_r$; 
\item[(ii)] For each $r \in [\ell]$, $\sum _{i \in P_r} a_i=0$. 
\end{itemize}
A set $A $ of integers is \emph{$\LL$-free} if $A$ does not contain any non-trivial solutions to $\LL$. If the equation $\LL$ is clear from the context, then we simply say $A$ is \emph{solution-free}.

\subsubsection{Sum-free sets}
A set $S$ (of integers or elements of a group) is \emph{sum-free} if there does not exist $x,y,z$ in $S$ such that $x+y=z$.
The topic of sum-free sets has a rich history spanning a number of branches of mathematics. 
In 1916 Schur~\cite{schur} proved that, given $r \in \mathbb N$, if $n$ is sufficiently large, then any $r$-colouring of  $[n]:=\{1, \dots , n\}$ yields a monochromatic triple $x,y,z$ such that $x+y=z$. (Equivalently, 
$[n]$ cannot be partitioned into $r$ sum-free sets.)
 This theorem was followed by other seminal related results such as van der Waerden's theorem~\cite{vdw}, and ultimately led to the birth of arithmetic Ramsey theory.

 Paul Erd\H{o}s had a particular affinity towards sum-free sets. In 1965 he~\cite{erdos} proved one of the cornerstone results in the subject: every set of $n$ non-zero integers $A$ contains a sum-free subset of
size at least $n/3$. Employing the probabilistic method, Alon and Kleitman~\cite{alonkleitman} improved this bound to $(n+1)/3$ and further, Kolountzakis~\cite{kolo} gave a polynomial time algorithm for constructing such a sum-free subset. 
Then, using a Fourier-analytical approach, Bourgain~\cite{bourg} further improved the bound to $(n+2)/3$ in the case when $A$ consists of positive integers.
Erd\H{o}s~\cite{erdos} also raised the question of determining upper bounds for this problem: recently Eberhard, Green and Manners~\cite{egm} asymptotically resolved this important classical problem by proving that there is a set of positive integers $A$ of size $n$ such that $A$ does not contain any sum-free subset of size greater than $n/3+o(n)$.

A number of important questions concerning sum-free sets were raised in two papers of Cameron and Erd\H{o}s~\cite{cam1, CE}. In~\cite{cam1}, Cameron and Erd\H{o}s conjectured that there are $\Theta (2^{n/2})$
sum-free subsets of $[n]$. Here, the lower bound follows by observing that the largest sum-free subset of $[n]$ has size $\lceil n/2 \rceil $; this is attained by the set of odds in $[n]$ and by $\{\lfloor n/2 \rfloor+1, \dots , n\}$. Then, for example, by taking all subsets of $[n]$ containing only odd numbers one obtains at least $2^{n/2}$ sum-free subsets of $[n]$.
After receiving much attention, the Cameron--Erd\H{o}s conjecture was proven independently by Green~\cite{G-CE} and Sapozhenko~\cite{sap}. 
Given a set $A$ of integers we say $S\subseteq A$ is a \emph{maximal sum-free subset of $A$} if $S$ is sum-free and it is not properly contained in another sum-free subset of $A$.
Cameron and Erd\H{o}s~\cite{CE} raised the question of how many maximal sum-free subsets there are in $[n]$. Very recently, this question has been resolved via a combinatorial approach by Balogh, Liu, Sharifzadeh and Treglown~\cite{BLST,BLST2}.

Sum-free sets have also received significant attention with respect to groups. One highlight in this direction is work of Diananda and Yap~\cite{yap} and Green and Ruzsa~\cite{GR-g} that 
determines the size of the largest sum-free subset for every finite abelian group. In each case the largest sum-free set has size linear in the size of the abelian group. 
Another striking result in the area follows from Gowers' work on quasirandom groups. 
Indeed, Gowers~\cite{gow} proved that there are non-abelian groups for which the largest sum-free subset has sublinear size, thereby answering a question of Babai and S\'os~\cite{sos}. See the survey of Tao and Vu~\cite{taovu} for a discussion on further problems concerning sum-free sets in groups.

\subsubsection{Progression-free sets}
A set $S$ (of integers or elements of a group) is \emph{progression-free} if there does not exist distinct $x,y,z$ in $S$ such that $x+y=2z$. 
The study of progression-free sets has focused on similar questions to those relating to sum-free sets. 

Unlike in the case of sum-free sets, one cannot ensure that every  finite set of non-zero integers contains a progression-free subset of linear size. Indeed, a classical result of Roth~\cite{roth} implies that the largest progression-free subset of $[n]$ has
size $o(n)$. This has led to much interest in determining good bounds on the size of such a subset of $[n]$. See~\cite{bloom,elk,greenwolf} for the state-of-the-art lower and upper bounds for this problem. 

Roth's theorem has been generalised in various directions; most famously via Szemer\'edi's theorem~\cite{sz} which ensures that, if $n$ is sufficiently large,  every subset of $[n]$ of linear size contains arithmetic progressions of arbitrary length. 
Analogues of Roth's theorem have also been considered for finite abelian groups; see, for example, \cite{brown, fgr2, lev}.

As in the case of sum-free sets, it is also natural to ask for the number of progression-free subsets of $[n]$. More generally, Cameron and Erd\H{o}s~\cite{cam1} raised the question of how many subsets of $[n]$ do not contain an arithmetic progression  of length $k$. Significant progress on the problem has recently been made in~\cite{bls, container1, container2}.

\smallskip

We remark that there has also been much work on $\LL$-free sets other than the cases of sum-free and progression-free sets; see for example~\cite{ruzsa, ruzsa2}.

\subsection{Computational complexity}
\label{sec:complexity}

In this paper we are concerned with determining which problems related to solution-free sets of integers are computationally tractable.  In the first instance, we seek to classify decision problems as either belonging to the class $\P$ (i.e. being solvable in polynomial time) or being $\NP$-hard and so unlikely to admit polynomial-time algorithms.  For further background on computational complexity, the classes $\P$ and $\NP$, and polynomial-time reductions we refer the reader to \cite{garey}.

When dealing with sets of positive integers as input, it should be noted that the amount of space required to represent the input depends both on the cardinality of the set and on the magnitude of the numbers in the set.  Given any finite set $A\subseteq \mathbb Z$, we write 
$\max (A)$ and $\min (A)$ for the elements of $A$ whose  values are maximum and minimum respectively; we further define $\max ^*(A):=\max\{|a| :a \in A\}$ and $\min ^*(A):=\min\{|a| :a \in A\}$.  We write
$\size{A}$ for the number of bits required to represent $A$, and note that there exist positive constants $c_1$ and $c_2$ such that
$$c_1 \max\{|A| \log({\min} ^* (A)), \log({\max} ^* (A))\} \leq \size{A} \leq c_2 |A| \log ({\max} ^* (A)).$$
We therefore consider a problem involving the set $A$ to belong to $\P$ if it can be solved by an algorithm whose running time is bounded by a polynomial function of $\size{A}$; note that this is true if and only if the running time is bounded by a polynomial function of $|A| \log ( \max^*(A))$.  If $A = \{a_1,\ldots,a_n\}$, we will assume that $A$ is stored in such a way that, given $i$, we can read the element $a_i$ in time $\mathcal{O}(\log (|a_i|))$.

There are two basic operations which we will need to consider in almost all of the algorithms and reductions discussed in this paper.  First of all, we often need to determine whether a given $\ell$-tuple $(x_1,\ldots,x_{\ell})$ is a solution to the $\ell$-variable linear equation $\LL$.  Note that we assume throughout that $\ell$ and all coefficients in $\LL$ are constants, but $x_1,\ldots,x_{\ell}$ are taken to be part of the input; thus, allowing for the time required to carry out the necessary arithmetic operations, we can certainly determine whether $(x_1,\ldots,x_{\ell})$ is a solution in time $\mathcal{O}(\log (\max_{1 \leq i \leq \ell} |x_i|))$.

Secondly, we will in many cases need to determine whether a given set $A \subseteq \mathbb{Z}$ is $\LL$-free.  We can do this in a naive way by considering all possible $\ell$-tuples, and checking for each one whether it is a solution.  Since there are $|A|^{\ell}$ possible $\ell$-tuples, we see by the reasoning above that we can complete this procedure in time $\mathcal{O}\left(|A|^{\ell} \cdot \log^2 \left(\max ^*(A)\right)\right)$.  Note that, as $\ell$ is a constant, this is a polynomial-time algorithm in terms of $\size{A}$.

\smallskip

In this paper, we will also discuss the parameterised complexity of various decision problems involving solution-free sets.  Parameterised complexity provides a multivariate framework for the analysis of hard problems: if a problem is known to be $\NP$-hard, so that we expect the running-time of any algorithm to depend exponentially on some aspect of the input, we can seek to restrict this exponential blow-up to one or more \emph{parameters} of the problem rather than the total input size.  This has the potential to provide an efficient solution to the problem if the parameter(s) in question are much smaller than the total input size.  A parameterised problem with total input size $n$ and parameter $k$ is considered to be tractable if it can be solved by a so-called \emph{fpt-algorithm}, an algorithm whose running time is bounded by $f(k)\cdot n^{\mathcal{O}(1)}$,  where $f$ can be any computable function.  Such problems are said to belong to the complexity class $\FPT$.  The primary method for showing that a problem is unlikely to belong to $\FPT$ is to show that it is hard for some class $W[t]$ (where $t \geq 1$) in the W-heirarchy (see \cite{flumgrohe} for a formal definition of these classes).  When reducing one parameterised problem to another in order to demonstrate the hardness of a parameterised problem, we have to be a little more careful than with standard $\NP$-hardness reductions: as well as making sure that we can construct the new problem instance efficiently, we also need to ensure that the parameter value in the new problem depends only on the parameter value in the original problem.

Let $\Sigma$ be a finite alphabet.  We define a \emph{parameterised decision problem} to be a pair $(\Pi,\kappa)$ where $\Pi: \Sigma^* \rightarrow \{\text{YES, NO}\}$ is a function  and $\kappa: \Sigma^* \rightarrow \mathbb{N}$ is a parameterisation (a polynomial-time computable mapping).  Then, an \emph{fpt-reduction from $(\Pi,\kappa)$ to $(\Pi',\kappa')$} is an algorithm $A$ such that
\begin{enumerate}
\item $A$ is an fpt-algorithm;
\item given a yes-instance of $\Pi$ as input, $A$ outputs a yes-instance of $\Pi'$, and given a no-instance of $\Pi$ as input, $A$ outputs a no-instance of $\Pi'$;
\item there is a computable function $g$ such that, if $I$ is the input to $A$, then $\kappa'\left(A(I)\right) \leq g\left(\kappa(I)\right)$.
\end{enumerate}
For further background on the theory of parameterised complexity we refer the reader to \cite{downeyfellows13,flumgrohe}.  We will also consider parameterised counting problems in Section \ref{sec:counting}, and relevant notions will be introduced at the start of this section.

%%%%%%%%%%%%%%

\section{The decision problem}
\label{sec:decision}

In this section, we consider the following problem, where $\LL$ is any fixed linear equation.
\begin{framed}
\noindent $\LL$-\textsc{Free Subset}\newline
\textit{Input:} A finite set $A \subseteq \mathbb Z$ and $k \in \mathbb N$.\newline
\textit{Question:} Does there exist an $\LL$-free subset $A' \subseteq A$ such that $|A'|=k$?
\end{framed}

We show that this problem is closely related to the well-known \textsc{Hitting Set} problem, and exploit this relationship to show that the problem is $\NP$-complete for a large family of equations $\LL$, including those defining both sum-free and progression-free sets.  On the other hand, we see that the problem is polynomially solvable whenever $\LL$ is a linear equation with only two variables.

We begin in Section \ref{sec:reduction} by showing how to construct an instance of $\LL$-\textsc{Free Subset} that corresponds in a specific way to a given hypergraph; we will make use of this same construction to prove many results in later sections of the paper.  We exploit this construction to give an $\NP$-completeness proof in Section~\ref{sec:dec-NP}, before considering the two-variable case in \ref{sec:dec-2var}.

\subsection{A useful construction}
\label{sec:reduction}

In this section we describe the main construction we will exploit throughout the paper, and prove its key properties.  Recall that an $\ell$-uniform hypergraph is a hypergraph in which every edge has size exactly $\ell$.  Given any set $X$ and $\ell \in \mathbb{N}$, we write $X^{\ell}$ for the set of ordered $\ell$-tuples whose elements belong to $X$.

\begin{lemma}\label{useful1} Let $\LL$ be a linear equation $a_1x_1 + \cdots + a_{\ell}x_{\ell} = by$ where each $a_i \in \mathbb{N}$ and $b \in \mathbb{N}$ are fixed, and let $H=(V,E)$ be an $\ell$-uniform hypergraph.  Then we can construct in polynomial time a set $A \subseteq \mathbb{N}$ with the following properties:\begin{enumerate}
\item $A$ is the disjoint union of two sets $A'$ and $A''$, where $|A'| = |V|$ and $|A''|=|E|$;
\item there exist bijections $\phi_V: A' \rightarrow V$ and $\phi_E: A'' \rightarrow E$;
\item for every $(x_1,\ldots,x_{\ell},y) \in A^{\ell + 1}$, we have that $(x_1,\ldots,x_{\ell},y)$ is a non-trivial solution to $\LL$ if and only if $x_1,\ldots,x_{\ell} \in A'$, $y \in A''$ and $\{\phi_V(x_1),\ldots,\phi_V(x_{\ell})\} = \phi_E(y)$;
\item $\log (\max(A))=\mathcal{O}(|V|)$.
\end{enumerate}
\end{lemma}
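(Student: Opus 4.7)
The plan is to encode the hypergraph $H=(V,E)$ via a base-$M$ expansion, where $M$ is a sufficiently large constant depending only on $\LL$. Enumerate $V = \{v_1, \ldots, v_n\}$ and $E = \{e_1, \ldots, e_m\}$. Working in the natural symmetric case $a_1 = \cdots = a_\ell = a$ (which is forced by the ``set equals edge'' formulation of~(3), since any ordering of the edge's vertices must then yield a solution, and which in any case covers sum-free and progression-free sets), I would set
\[
\alpha_i := b M^i \qquad (i \in [n]), \qquad \beta_j := a \sum_{v_i \in e_j} M^i \qquad (j \in [m]),
\]
and take $A' := \{\alpha_i\}$, $A'' := \{\beta_j\}$, $A := A' \cup A''$, with the evident bijections $\phi_V(\alpha_i) = v_i$ and $\phi_E(\beta_j) = e_j$. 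The construction is carried out in time polynomial in $|V|+|E|$ and $\log M$, giving~(1) and~(2) immediately; and since $\max(A) \le a\ell M^n$, we also obtain $\log \max(A) = O(|V|)$, establishing~(4).

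The heart of the argument is~(3) together with the disjointness $A' \cap A'' = \emptyset$. I would choose $M$ large enough (say $M > \ell^2 a^2 + \ell a b + b^2$) so that every expression of the form $\sum_k a x_k$ or $b y$ with entries drawn from $A$ has all of its base-$M$ digits strictly less than $M$; then no carrying occurs and the digit sequence of each side of $\LL$ is uniquely determined by its value. Disjointness is then immediate: every $\alpha_i$ has a single nonzero base-$M$ digit, whereas every $\beta_j$ has $\ell \geq 2$ nonzero digits.

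For the iff in~(3), the reverse direction is a direct calculation: if $e_j = \{v_{i_1},\ldots,v_{i_\ell}\}$ then, for any permutation $\pi$ of its vertices, $\sum_k a\, \alpha_{i_{\pi(k)}} = ab \sum_k M^{i_k} = b\beta_j$, and non-triviality follows because the $\alpha_{i_k}$'s are pairwise distinct. For the forward direction, let $(x_1,\ldots,x_\ell,y)$ be a non-trivial solution and split according to whether $y \in A'$ or $y \in A''$, and then according to how many of the $x_k$ lie in $A''$. In the target configuration ($y \in A''$ and all $x_k \in A'$), matching the base-$M$ digits of $ab\sum_k M^{q_k}$ with those of $b\beta_y = ab \sum_{v_i \in \phi_E(y)} M^i$ forces the multiset $\{q_k\}$ to coincide with the vertex set of $\phi_E(y)$, which is exactly the claimed condition. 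In every other configuration, a digit count shows that the only way the two sides can agree is for all of $x_1,\ldots,x_\ell,y$ to coincide, yielding at most trivial solutions.

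The main obstacle lies in this last sweep of case analysis, particularly in the translation-invariant subcase $a\ell = b$ where ``all variables equal'' genuinely is a solution and must be recognised as trivial: there one must argue that any partial mixing of $A'$ and $A''$ entries on the left-hand side creates nonzero digits at positions that cannot simultaneously match those of $by$. Once $M$ is fixed large enough to preclude carrying, all such comparisons reduce to routine bookkeeping of digit sums, and the proof goes through.
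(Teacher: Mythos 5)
Your construction is the same as the paper's: encode vertices as $bM^i$ and edges as weighted digit sums in a base $M$ large enough that no carrying can occur, then classify solutions to $\LL$ by comparing the sets of nonzero base-$M$ digits of the two sides. The case analysis you sketch (split on whether $y \in A'$ or $y \in A''$, then on how many of the $x_k$ lie in $A''$) is exactly the paper's, and the digit-support bookkeeping does close every case, including the translation-invariant one $\ell a = b$ (in the mixed case, an $x_k \in A'$ forces some digit of the left-hand side to strictly exceed the corresponding digit $ab$ of $by$). So the outline is sound, though you have deferred rather than written the decisive step.

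The substantive issue is your restriction to $a_1 = \cdots = a_\ell$. You are right that the literal ``if'' direction of (3), with the unordered set equality $\{\phi_V(x_1),\ldots,\phi_V(x_\ell)\} = \phi_E(y)$, can only hold when the coefficients are equal, and indeed the paper's construction with unequal $a_j$ only yields a solution for the ordering $i_1 < \cdots < i_\ell$ matching the edge element $a_1 d^{i_1} + \cdots + a_\ell d^{i_\ell}$; this is a genuine imprecision in the lemma's statement. But the lemma is invoked later for arbitrary equations $a_1x_1 + a_2x_2 = by$ with possibly $a_1 \neq a_2$, so the right move is not to drop the general case but to prove it with $A'' := \{a_1 d^{i_1} + \cdots + a_\ell d^{i_\ell} : i_1 < \cdots < i_\ell,\ v_{i_1}\cdots v_{i_\ell} \in E\}$ and to read (3) as characterising which $(\ell+1)$-tuples of elements of $A$ support a non-trivial solution in some ordering --- which is all the corollaries use. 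Your digit argument transfers verbatim: the forward direction never uses equality of the $a_j$, only their positivity, and the reverse direction holds for the one ordering that matters.
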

\proof
Write $a:=\max _{1\leq j\leq \ell} a_j$ and 
set $d:=2\ell  a^2b^2$.
Let $V=:\{v_1, \dots, v_n \}$ denote the vertex set of $H$. Define 
$$A':=\{bd^i : \ i \in [n]\} $$
 and 
$$A'':=\left \{ (a_1 d^{i_1}+a_2d^{i_2}+\dots +a_\ell d^{i_{\ell}}) : i_1<i_2<\dots<i_\ell \text{ and } v_{i_1}v_{i_2}\dots v_{i_{\ell}} \in E\right \}.$$
%The choice of $d$ ensures that $A''\subseteq \mathbb N$.
Further, define $\phi_V: A' \rightarrow V$ by setting $\phi_V(bd^i)=v_i$ for all $i \in [n]$, and note that $\phi_V$ is a well-defined bijection.  We define $\phi_E: A'' \rightarrow E$ by setting
$\phi _E(a_1 d^{i_1}+a_2d^{i_2}+\dots +a_\ell d^{i_{\ell}})= v_{i_1}v_{i_2}\dots v_{i_{\ell}}$ where $i_1<i_2<\dots<i_\ell$ and  $v_{i_1}v_{i_2}\dots v_{i_{\ell}} \in E$; to see that $\phi_E$ is also a well-defined bijection it suffices to observe that, by the uniqueness of base-$d$ representation of natural numbers, there is a unique way to write any $y \in A''$ in the form $a_1 d^{i_1}+a_2d^{i_2}+\dots +a_\ell d^{i_{\ell}}$.  It follows from the bijectivity of $\phi_V$ and $\phi_E$ that we have defined $A'$ so that for each vertex $v_i\in V$ there is a unique number $bd^i\in A'$ associated with it, and defined $A''$ so that for each edge $v_{i_1}v_{i_2}\dots v_{i_{\ell}}\in E$ there is a unique number 
$(a_1 d^{i_1}+a_2d^{i_2}+\dots +a_\ell d^{i_{\ell}})\in A''$ associated with it. Define $A:=A'\cup A''$.
Given $H$ we produce $A$ in time $\mathcal{O}((|V|+|E|)\log bd^{|V|})=\mathcal{O}(n^{\ell+1})$.

Notice that conditions (1), (2) and (4) of the lemma immediately hold. To prove (3) note that it suffices to prove the following claim.

\begin{claim}\label{c1}
The only non-trivial solutions $(x_1,x_2,\dots , x_{\ell}, y)$ to $\LL$ in $A$ are such that each $x_j=bd^{i_j}\in A'$ for some $i_1<i_2<\dots<i_{\ell}$ and  $y=(a_1 d^{i_1}+a_2d^{i_2}+\dots +a_\ell d^{i_{\ell}})\in A''$.
\end{claim}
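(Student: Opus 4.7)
The plan is to argue via base-$d$ representation. The choice $d = 2\ell a^2 b^2$ ensures that when both sides of $a_1 x_1 + \cdots + a_\ell x_\ell = by$ are expanded as formal sums $\sum_p c_p d^p$ without performing carries, every coefficient $c_p$ is a non-negative integer strictly less than $d$: the contribution of any single term to a given power is at most $a\max(a,b)$, so the left-hand side has coefficients bounded by $\ell a\max(a,b) \le \ell a^2 b < d$, and the right-hand side by $b\max(a,b) < d$. By uniqueness of base-$d$ representation, these formal coefficients must agree at every power of $d$, and I can reason position by position.

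I would then split on whether $y \in A'$ or $y \in A''$. Suppose first $y = bd^{i_*} \in A'$. The right-hand side $by = b^2 d^{i_*}$ has its only non-zero coefficient at position $i_*$; since all $a_j$ are positive, the left vanishes at every other position only if each $x_j$ has all its support at $i_*$. This is incompatible with $x_j \in A''$ (whose support has size $\ell \ge 2$), so every $x_j = bd^{i_*} = y$, the equation reduces to $\sum_j a_j = b$, and the solution is trivial under the single-class partition $\{[\ell+1]\}$.

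Now suppose $y = \sum_k a_k d^{j_k} \in A''$ with $j_1 < \cdots < j_\ell$. The same vanishing argument at positions outside $\{j_1,\ldots,j_\ell\}$ confines every $x_j$'s support to these $\ell$ positions. Hence each $x_j \in A'$ has the form $bd^{j_{k(j)}}$ for some $k(j) \in [\ell]$, and each $x_j \in A''$ has its $\ell$-element support inside a set of size $\ell$, so equals $y$. Writing $T = \{j : x_j \in A''\}$, $S = \sum_{j \in T} a_j$, and $R_m = \sum_{j \notin T,\,k(j)=m} a_j$, comparison of coefficients at position $j_m$ gives $S a_m + b R_m = b a_m$, i.e.\ $R_m = a_m(b-S)/b$.

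The crucial observation closing the argument is that if $S < b$ then every $R_m$ is a \emph{positive} integer, so each preimage $\{j \notin T : k(j) = m\}$ is non-empty; this forces $|T^c| \ge \ell$ and hence $T = \emptyset$, so $S = 0$. In that case $R_m = a_m$, the map $k$ is a bijection, and $\{x_1,\ldots,x_\ell\}$ is exactly the $\ell$-element set $\{bd^{j_1},\ldots,bd^{j_\ell}\}$, as required. The only remaining possibility $S \ge b$, combined with the identity $S\bigl(\sum_j a_j - b\bigr) = 0$ obtained by summing the digit-equations over $m$, forces $\sum_j a_j = b$ and $T = [\ell]$, giving the trivial all-equal solution $x_1 = \cdots = x_\ell = y$. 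Non-triviality of the canonical solutions follows because $y$ has $\ell \ge 2$ non-zero digits while each $x_j$ has only one, so all $\ell+1$ variables are distinct; every partition class is thus a singleton whose coefficient ($a_j$ or $-b$) is nonzero. The main obstacle is handling the case $y \in A''$ with some $x_j \in A''$ in a clean way; once positivity of $R_m$ is used to bound $|T^c|$ from below, everything closes immediately.
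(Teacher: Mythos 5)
Your proof is correct and follows essentially the same route as the paper: expand everything in base $d$, note that the choice of $d$ prevents carries so the two sides must agree digit by digit, and split on whether $y\in A'$ or $y\in A''$. Your handling of the mixed case via the explicit digit equations $Sa_m+bR_m=ba_m$ is a more computational rendering of the paper's support-cardinality contradiction, but the underlying argument is the same.
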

\noindent To prove the claim it is helpful to consider the natural numbers working in base $d$. We will use the \emph{coordinate notation} $[c_0,c_1,c_2,\dots]$ to denote the natural number $c_0d^0+c_1d^1+c_2d^2+\dots$.
So with respect to this notation, each $bd^i\in A'$ has a zero in each coordinate except the $i$th coordinate, which takes value $b$. Each 
$(a_1 d^{i_1}+a_2d^{i_2}+\dots +a_\ell d^{i_{\ell}})\in A''$ takes value $a_j$ in its $i_j$th coordinate, and zero otherwise.

Suppose we have $x_1, \dots , x_t \in A$ for some $t \in \mathbb N$. Define $\text{coord}(x_1,\dots, x_t)$ to be the set of all integers $i\geq 0$ such that for at least one of the elements $x_j$ in $\{x_1,\dots,x_t\}$, the $i$th coordinate of $x_j$ is non-zero.  Note that (since $d$ is sufficiently large compared with the $a_i$ and $b$) we have $|\text{coord}(x)|=1$ for all $x \in A'$ and $|\text{coord}(y)| = \ell$ for all $y \in A''$.  Moreover, for any $x,x' \in A$, $\text{coord}(x) = \text{coord}(x')$ if and only if $x = x'$.

Suppose $(x_1,\dots,x_{\ell},y)$ is a non-trivial solution to $\LL$ in $A$. (Note the choice of $\LL$ ensures the only trivial solutions to $\LL$ are such that $x_1=\dots=x_\ell=y$.)
Crucially we defined $d$ to be large with respect to the $a_i$ and $b$. Thus, $\text{coord}(x_1,\dots, x_{\ell})=\text{coord}(a_1 x_1+a_2x_2+\dots +a_\ell x_{\ell})$.
That is, in coordinate notation, the coordinates of $(a_1 x_1+a_2x_2+\dots +a_\ell x_{\ell})$ that are non-zero are precisely those coordinates that are non-zero in at least one of the $x_j$.
So this gives us that $\text{coord}(x_1,\dots, x_{\ell})=\text{coord}(by)$.

If $y \in A'$ then $|\text{coord}(by)|=1$. Note in this case we obtain a contradiction if  $|\text{coord}(x_1,\dots, x_{\ell})|\geq 2$. So it must be the case that $x_1=\dots=x_\ell$ and $x_1 \in A'$. This means that there is some $i \in [n]$ such that
$x_1=\dots=x_\ell=bd^i$;  $y=bd^i$; and further $a_1+\dots +a_\ell=b$. Thus $(x_1,\dots,x_\ell,y)$ is a trivial solution to $\LL$, a contradiction to our assumption.

Therefore $y \in A''$. Write $y=a_1 d^{i_1}+a_2d^{i_2}+\dots +a_\ell d^{i_{\ell}}$ where  $i_1<i_2<\dots<i_\ell$.
Suppose that  all of the $x_j$ lie in $A''$. Then since  $\text{coord}(x_1,\dots, x_{\ell})=\text{coord}(by)$, we must have that $x_1=\dots=x_\ell=y$. This implies $a_1+\dots +a_\ell=b$ and hence $(x_1,\dots,x_\ell,y)$ is a trivial solution to $\LL$, a contradiction.

Next suppose there is at least one $x_j \in A''$ and at least one $x_{j'} \in A'$. 
Without loss of generality, we may assume that there is some $1\leq \ell '\leq \ell -1$ so that $x_1,\dots, x_{\ell '}\in A''$ and $x_{\ell'+1},\dots, x_{\ell}\in A'$.
Since $\text{coord}(x_1,\dots, x_{\ell})=\text{coord}(by)$, we must have that $x_1=\dots=x_{\ell '}=y$. Furthermore, as $a_1x_1+\dots+a_\ell x_\ell = b y$ we have that $a_1x_1+\dots+a_{\ell'} x_{\ell'} < b y$ and thus
$a_1+\dots+a_{\ell'}<b$. In particular, $\text{coord}((b-a_1-\dots-a_{\ell'})y)=\{i_1,\dots, i_{\ell}\}$.
Therefore, as $a_{\ell'+1}x_{\ell'+1}+\dots+a_{\ell}x_{\ell}=(b-a_1-\dots-a_{\ell'})y$, this implies
$\text{coord}(a_{\ell'+1}x_{\ell'+1}+\dots+a_{\ell}x_{\ell})=\{i_1,\dots, i_{\ell}\}$.
However, since $\ell'\geq 1$ and $x_{\ell'+1},\dots, x_{\ell} \in A'$, we have that $|\text{coord}(a_{\ell'+1}x_{\ell'+1}+\dots+a_{\ell}x_{\ell})|<\ell$, a contradiction.

Altogether this implies that each $x_j \in A'$. The claim, and therefore lemma, now immediately follows. 
\endproof

A relationship between independent sets in  $H$ and $\LL$-free subsets of $A$ now follows easily.

\begin{cor}
\label{cor:is-reduction}
Let $\LL$ be a linear equation $a_1x_1 + \cdots + a_{\ell}x_{\ell} = by$ where each $a_i \in \mathbb{N}$ and $b \in \mathbb{N}$ are fixed, and let $H=(V,E)$ be an $\ell$-uniform hypergraph. 
Let $A$ and $A''$ be as in Lemma~\ref{useful1} on input $H$ and $\LL$.
Then, for any $k \in \mathbb N$,  there is a one-to-one correspondence between independent sets of $H$ of cardinality $k$ and the $\LL$-free subsets of $A$ of cardinality $|A''|+k$ which contain all the elements of $A''$. 
\end{cor}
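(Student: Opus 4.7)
The plan is to exploit Lemma~\ref{useful1}(3) directly: it tells us exactly which tuples in $A^{\ell+1}$ are non-trivial solutions to $\LL$, and this characterisation is precisely the structure of a hyperedge of $H$ transported through the bijections $\phi_V$ and $\phi_E$. So we only need to translate ``contains a non-trivial solution'' into ``contains a hyperedge'' and then count.

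First I would fix the map. Given a subset $S \subseteq A$ of cardinality $|A''|+k$ which contains all of $A''$, write $S = A'' \cup T$ where $T := S \setminus A'' \subseteq A'$, and note $|T|=k$. Define $\Phi(S) := \phi_V(T) \subseteq V$; since $\phi_V$ is a bijection, $|\Phi(S)|=k$. I would then show that $\Phi$ is the desired bijection by proving: $S$ is $\LL$-free if and only if $\Phi(S)$ is an independent set in $H$.

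For the main equivalence, assume $S$ is not $\LL$-free, i.e.\ $S$ contains a non-trivial solution $(x_1,\ldots,x_\ell,y)$. By Lemma~\ref{useful1}(3) we must have $x_1,\ldots,x_\ell \in A'$, $y \in A''$, and $\{\phi_V(x_1),\ldots,\phi_V(x_\ell)\} = \phi_E(y) \in E$. Since $A' \cap S = T$, all the $x_i$ lie in $T$, so $\phi_E(y) \subseteq \phi_V(T) = \Phi(S)$, meaning $\Phi(S)$ contains a hyperedge and is not independent. Conversely, if $\Phi(S)$ contains a hyperedge $e \in E$, set $y := \phi_E^{-1}(e) \in A'' \subseteq S$ and let $x_1,\ldots,x_\ell \in T$ be the preimages under $\phi_V$ of the vertices of $e$; by Lemma~\ref{useful1}(3) $(x_1,\ldots,x_\ell,y)$ is a non-trivial solution to $\LL$ contained in $S$, contradicting $\LL$-freeness. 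Finally, $\Phi$ is injective because $T$ (hence $S$) is recovered from $\Phi(S)$ via $\phi_V^{-1}$, and surjective because for any independent set $I \subseteq V$ of size $k$ the set $S := A'' \cup \phi_V^{-1}(I)$ contains all of $A''$, has cardinality $|A''|+k$, and (by the argument above) is $\LL$-free with $\Phi(S)=I$.

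There is no real obstacle here: all the work was done in Lemma~\ref{useful1}. The only thing to be careful about is to verify that every non-trivial solution witnessing non-$\LL$-freeness of $S$ indeed uses the special ``$y\in A''$, $x_i \in A'$'' shape, which is exactly what Lemma~\ref{useful1}(3) guarantees, so that the hyperedge structure is faithfully mirrored.
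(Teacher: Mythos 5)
Your proposal is correct and follows essentially the same route as the paper: the correspondence is $I \mapsto \phi_V^{-1}(I)\cup A''$ (equivalently $S \mapsto \phi_V(S\setminus A'')$), with the equivalence between $\LL$-freeness and independence read off directly from Lemma~\ref{useful1}(3) and injectivity/surjectivity from the bijectivity of $\phi_V$. You simply spell out both directions of the key equivalence in more detail than the paper does.
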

\proof
The corollary follows immediately from Lemma~\ref{useful1}. Indeed, let $\phi_V$ and $\phi _E$ be as in Lemma~\ref{useful1}. Given an independent set $I$ of $H$, note that $\phi ^{-1}_V(I) \cup A''$ is an $\LL$-free subset of $A$ of size
$|I|+|A''|$; by bijectivity of $\phi_V$, the $\LL$-free subsets corresponding to independent sets $I_1 \neq I_2$ are distinct. Further, given any $\LL$-free subset $S\cup A''$ of $A$ of size $|S|+|A''|$, we have that $\phi _V (S)$ is an independent set in $H$ of size $|S|$; again, by bijectivity of $\phi_V$, we obtain a unique independent set for each such $\LL$-free subset.
\endproof

Finally, if we are only interested in the existence of independent sets, we can drop one of the conditions on the $\LL$-free subsets.

\begin{cor}
\label{cor:includes-A''}
Let $\LL$ be a linear equation $a_1x_1 + \cdots + a_{\ell}x_{\ell} = by$ where each $a_i \in \mathbb{N}$ and $b \in \mathbb{N}$ are fixed, and let $H=(V,E)$ be an $\ell$-uniform hypergraph. 
Let $A$ and $A''$ be as in Lemma~\ref{useful1} on input $H$ and $\LL$.
Then, for any $k \in \mathbb N$,  $H$ contains an independent set of cardinality $k$ if and only if $A$ contains an $\LL$-free subset of cardinality $|A''| + k$.
\end{cor}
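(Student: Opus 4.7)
The forward direction is immediate from Corollary \ref{cor:is-reduction}: if $H$ has an independent set $I$ of size $k$, then $\phi_V^{-1}(I) \cup A''$ is an $\LL$-free subset of $A$ of size $|A''| + k$. So the content of the corollary lies in showing that any $\LL$-free subset of size $|A''| + k$ can be converted into an independent set of $H$ of size $k$, even if it does not contain $A''$.

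For the reverse direction, the plan is as follows. Suppose $S \subseteq A$ is $\LL$-free with $|S| = |A''| + k$, and write $S' := S \cap A'$ and $S'' := S \cap A''$, so $|S'| + |S''| = |A''| + k$. Set $I := \phi_V(S') \subseteq V$, noting $|I| = |S'|$. The key observation is to control how far $I$ is from being independent by counting edges of $H$ entirely contained in $I$. If $e = \{\phi_V(x_1),\ldots,\phi_V(x_\ell)\} \in E$ lies entirely within $I$, then $x_1,\ldots,x_\ell \in S'$, and by property (3) of Lemma \ref{useful1} the element $y := \phi_E^{-1}(e) \in A''$ satisfies $(x_1,\ldots,x_\ell,y)$ being a non-trivial solution to $\LL$. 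Since $S$ is $\LL$-free, this forces $y \notin S''$, so such an edge corresponds to an element of $A'' \setminus S''$. Hence the number of edges of $H$ fully contained in $I$ is at most $|A''| - |S''|$.

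Now, to obtain an independent set, I remove one endpoint from each edge of $H$ contained in $I$; this destroys all such edges and shrinks $I$ by at most $|A''| - |S''|$. The resulting set $I^* \subseteq I$ is independent in $H$, and
\[
|I^*| \;\geq\; |S'| - (|A''| - |S''|) \;=\; (|S'| + |S''|) - |A''| \;=\; k.
\]
Taking any $k$-element subset of $I^*$ yields the desired independent set.

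I do not expect any serious obstacle: once one notices that $\LL$-freeness of $S$ together with property (3) of Lemma \ref{useful1} forces $\phi_E^{-1}(e) \in A'' \setminus S''$ for every edge $e \subseteq I$, the cardinality bookkeeping works out cleanly. The only minor subtlety is remembering that $\phi_V$ and $\phi_E$ are bijections, so the correspondences between edges in $I$ and elements of $A'' \setminus S''$ give genuine cardinality bounds and not merely inequalities in one direction.
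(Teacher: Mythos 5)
Your proof is correct. It differs from the paper's in how the ``reverse'' direction is organised: the paper first shows (via an iterative exchange on the integer set $A$) that any $\LL$-free subset of size $|A''|+k$ can be converted into one of at least the same size that \emph{contains all of} $A''$, and then invokes Corollary \ref{cor:is-reduction}; you instead pass immediately to the hypergraph and run a direct counting argument, using the injectivity of $\phi_E$ to show that the edges of $H$ fully contained in $\phi_V(S\cap A')$ inject into $A''\setminus S$, so that deleting one vertex per such edge leaves an independent set of size at least $k$. The two arguments exploit exactly the same consequence of Lemma \ref{useful1}(3) --- each edge whose witnesses all survive costs one element of $A''$ --- but yours replaces the paper's iterative swap-and-repeat procedure with a single clean cardinality bound, which is arguably tidier; the paper's version has the mild advantage that it produces an $\LL$-free set containing $A''$ as an intermediate object, a fact it reuses verbatim in the PTAS reduction of Section \ref{sec:ap}. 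The one point worth stating explicitly in your write-up is that property (3) of Lemma \ref{useful1} guarantees a non-trivial solution for (at least one ordering of) the preimages of the vertices of $e$ together with $\phi_E^{-1}(e)$, which is all your injection needs.
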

\proof
By Corollary \ref{cor:is-reduction}, it suffices to show that, if $A$ contains an $\LL$-free subset of cardinality $|A''| + k$, then in fact $A$ contains such a subset which includes all elements of $A''$.  To see this, let $A_1$ be an $\LL$-free subset of $A$ of size $|A''| + k$ which does not contain all elements of $A''$; we will show how to construct an $\LL$-free subset of equal or greater size which does have this additional property.

Suppose $y\in A''$ such that $y \not \in A_1$. By Lemma~\ref{useful1}(3)  there is a unique choice of $x_1,\dots,x_\ell \in A$ such that $(x_1,\dots,x_\ell,y)$ is a non-trivial solution to $\LL$ in $A$.
If one of these $x_j$ does not lie in $A_1$ we add $y$ to $A_1$ without creating a solution to $\LL$. Otherwise, arbitrarily remove one of the $x_j$ from $A_1$ and replace it with $y$.
Repeating this process, we obtain an $\LL$-free subset which contains $A''$ and is at least as large as $A_1$.
\endproof

\subsection{The case of three or more variables}
\label{sec:dec-NP}

The next result shows that for a range of linear equations, $\LL$-\textsc{Free Subset} is $\NP$-complete. For example, the result includes the cases when $\LL$ is $x+y=z$ (i.e. sum-free sets) and $x+y=2z$ (i.e. progression-free sets).
To prove that $\LL$-\textsc{Free Subset} is $\NP$-hard we will use a reduction from the following $\NP$-complete problem~\cite{garey}.
Recall that a \emph{hitting set} $S$ in a hypergraph $H$ is a collection of vertices such that every edge in $H$ contains at least one vertex from $S$.
\begin{framed}
\noindent $\ell$-\textsc{Hitting Set}\newline
\textit{Input:} An $\ell$-uniform hypergraph $H$ and $s \in \mathbb N$.\newline
\textit{Question:} Does $H$ contain a hitting set of size $s$?
\end{framed}

\begin{thm}\label{npthm}
Let $\LL$ be a linear equation of the form $a_1x_1+\dots+a_\ell x_\ell = b y$ where each $a_i \in \mathbb N$ and $b \in \mathbb N$ are fixed and $\ell\geq 2$.
Then $\LL$-\textsc{Free Subset} is $\NP$-complete.
\end{thm}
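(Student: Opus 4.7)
The plan is to establish $\NP$-completeness in the standard two stages, using Corollary~\ref{cor:includes-A''} to carry out the hardness reduction. For membership in $\NP$, given an instance $(A,k)$ of $\LL$-\textsc{Free Subset}, we guess a subset $A'\subseteq A$ with $|A'|=k$ and then verify that $A'$ is $\LL$-free by exhaustively checking all $(\ell+1)$-tuples from $A'$; as discussed in Section~\ref{sec:complexity}, since $\ell$ is a constant, this takes time polynomial in $\size{A}$.

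For $\NP$-hardness, I would reduce from $\ell$-\textsc{Hitting Set}, which is $\NP$-complete for every $\ell\geq 2$ (the case $\ell=2$ being \textsc{Vertex Cover}). Given an instance $(H,s)$ of $\ell$-\textsc{Hitting Set} with $H=(V,E)$, apply the construction of Lemma~\ref{useful1} to produce in polynomial time a set $A = A' \cup A''$ with $|A''|=|E|$; then output the $\LL$-\textsc{Free Subset} instance $(A, k)$ where
\[
k := |V| + |E| - s.
\]
Correctness follows from the textbook duality between hitting sets and independent sets: $S\subseteq V$ is a hitting set of $H$ if and only if $V\setminus S$ is an independent set, so $H$ has a hitting set of size $s$ if and only if $H$ has an independent set of size $|V|-s$. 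By Corollary~\ref{cor:includes-A''}, the latter holds precisely when $A$ contains an $\LL$-free subset of size $|A''|+(|V|-s) = |E|+|V|-s = k$. Since Lemma~\ref{useful1} guarantees that $A$ is constructed in polynomial time, the reduction is valid.

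The bulk of the technical work, namely engineering a set $A$ in which the only non-trivial solutions to $\LL$ correspond exactly to the edges of $H$ (so that $\LL$-freeness translates cleanly into an independence-type condition), has already been carried out in Lemma~\ref{useful1} via the base-$d$ encoding with $d=2\ell a^2 b^2$. Consequently the only remaining obstacle is essentially cosmetic: one has to confirm that the $\ell$-\textsc{Hitting Set} problem is indeed $\NP$-complete for every $\ell\geq 2$ (and not just $\ell\geq 3$), which is immediate from the $\NP$-completeness of \textsc{Vertex Cover}. No additional combinatorial insight beyond the lemma and corollary is required.
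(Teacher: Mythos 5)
Your proposal is correct and follows essentially the same route as the paper: membership in $\NP$ via the polynomial-time $\LL$-freeness check, and hardness via a reduction from $\ell$-\textsc{Hitting Set} through Lemma~\ref{useful1} and Corollary~\ref{cor:includes-A''}, with the same target size $k=|V|+|E|-s=|A|-s$. No substantive differences.
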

\proof
Recall from the discussion in Section \ref{sec:complexity} that we can determine in time polynomial in $\size{A}$ whether a set $A$ is $\LL$-free, so $\LL$-\textsc{Free Subset} is in $\NP$.  To show that the problem is $\NP$-complete, we give a reduction from $\ell$-\textsc{Hitting Set}.

Let $(H,s)$ be an instance of $\ell$-\textsc{Hitting Set}.  We construct $A$ and $A''\subseteq A$ as in Lemma~\ref{useful1}  under input $H$ and $\LL$ (taking time polynomial in $\size{A}$).  It suffices to show that $H$ has a hitting set of size $s$ if and only if $A$ contains an $\LL$-free subset of size $k:=|A|-s$.

Observe that $H$ has a hitting set of size $s$ if and only if it has an independent set of size $|H| - s$; by Corollary~\ref{cor:includes-A''}, this holds if and only if $A$ has an $\LL$-free subset of size $|A''| + |H| - s = |A| - s = k$.
\endproof
Note that since Lemma~\ref{useful1} outputs a set $A$ of \emph{natural numbers}, we have actually proved the following stronger result.
\begin{thm}\label{npthm2}
Let $\LL$ be a linear equation of the form $a_1x_1+\dots+a_\ell x_\ell = b y$ where each $a_i \in \mathbb N$ and $b \in \mathbb N$ are fixed and $\ell\geq 2$.  Then $\LL$-\textsc{Free Subset} is $\NP$-complete, even if the input set $A$ is a subset of $\mathbb{N}$.
\end{thm}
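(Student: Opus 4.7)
The plan is to observe that the reduction from $\ell$-\textsc{Hitting Set} employed in the proof of Theorem~\ref{npthm} already produces an input set consisting entirely of natural numbers, so no new argument is needed; one simply re-reads the construction of Lemma~\ref{useful1} with this restriction in mind.

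Concretely, I would point to the explicit formulas from the construction: with $a = \max_{j} a_j$, $d = 2\ell a^2 b^2$, and vertex set $V = \{v_1,\ldots,v_n\}$, the set $A'$ was defined as $\{bd^i : i \in [n]\}$ and $A''$ as
\[
\bigl\{a_1 d^{i_1} + a_2 d^{i_2} + \cdots + a_\ell d^{i_\ell} : i_1 < i_2 < \cdots < i_\ell,\; v_{i_1}v_{i_2}\cdots v_{i_\ell} \in E\bigr\}.
\]
Since all of $a_1,\ldots,a_\ell, b, d$ are natural numbers and the exponents $i, i_1,\ldots,i_\ell$ are positive integers, every element of $A = A' \cup A''$ lies in $\mathbb{N}$. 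Thus the reduction produced in the proof of Theorem~\ref{npthm} is, without modification, a polynomial-time reduction from $\ell$-\textsc{Hitting Set} to $\LL$-\textsc{Free Subset} whose output is a subset of $\mathbb{N}$. The containment in $\NP$ of course still holds when we further restrict to natural-number inputs, so $\NP$-completeness transfers verbatim.

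There is essentially no obstacle here: the work is already done in Lemma~\ref{useful1} and in the proof of Theorem~\ref{npthm}. The only thing worth emphasising in the writeup is the sign-positivity of every ingredient of $A'$ and $A''$, to make clear that the apparent strengthening costs nothing.
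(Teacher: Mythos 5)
Your proposal is correct and matches the paper exactly: the paper derives Theorem~\ref{npthm2} from the single observation that Lemma~\ref{useful1} already outputs a set of natural numbers, so the reduction used for Theorem~\ref{npthm} gives the stronger statement with no further work. Your additional remark that membership in $\NP$ is preserved under the restriction to $\mathbb{N}$ is a harmless (and correct) extra detail.
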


\subsection{The two variable case}
\label{sec:dec-2var}

For \emph{any} linear equation $\LL$ in two variables, it is straightforward to see that $\LL$-\textsc{Free Subset} is in $\P$.  Our strategy here is to reduce \emph{to} the problem of finding an independent set (rather than reducing from this problem as in the previous section) and to note that the graph we create must have a very specific structure.

\begin{thm}
Fix any linear equation $\LL$ in two variables.
Then $\LL$-\textsc{Free Subset} is in $\P$.
\end{thm}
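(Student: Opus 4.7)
Write the equation as $a_1 x_1 + a_2 x_2 = b$. The plan is to translate the problem into a maximum independent set problem on a graph of maximum degree $2$, which is trivially solvable in polynomial time. Before doing so, I would dispose of the degenerate cases: if $a_1 = a_2 = 0$, then every subset is $\LL$-free (or none is, depending on whether $b = 0$); if exactly one $a_i$ is $0$, say $a_2 = 0$, then the non-trivial solutions are exactly the pairs $(b/a_1, y)$, so being $\LL$-free simply means excluding the single element $b/a_1$ from $A$. Similarly, if $a_1 + a_2 = 0$ and $b = 0$, then the only solutions $(x_1, x_2)$ to $\LL$ satisfy $x_1 = x_2$ and are trivial, so every subset of $A$ is $\LL$-free. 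In what follows assume $a_1, a_2 \neq 0$ and we are not in this last degenerate case.

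Define an undirected graph $G = (A, E)$ together with a set $F \subseteq A$ of \emph{forbidden} vertices as follows: put $x \in F$ whenever $(x, x)$ is a non-trivial solution to $\LL$, and put $\{x, y\} \in E$ (with $x \neq y$) whenever at least one of the ordered pairs $(x, y)$, $(y, x)$ is a non-trivial solution to $\LL$. Given $x \in A$, there is at most one $y \in \mathbb{Z}$ satisfying $a_1 x + a_2 y = b$, namely $y = (b - a_1 x)/a_2$, and at most one $y \in \mathbb{Z}$ satisfying $a_1 y + a_2 x = b$; consequently every vertex of $G$ has degree at most $2$. We can construct $G$ and $F$ in time polynomial in $\size{A}$: after presorting $A$, iterate over each $x \in A$, compute the at most two candidate neighbours by the formulae above, and perform a binary search in $A$ to decide whether each candidate lies in $A$.

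By construction, a subset $A' \subseteq A$ is $\LL$-free if and only if $A' \cap F = \emptyset$ and $A'$ is an independent set of $G$. Since every subset of an $\LL$-free set is $\LL$-free, the instance $(A, k)$ is a yes-instance precisely when the graph $G' := G - F$ admits an independent set of size at least $k$. But $G'$ has maximum degree at most $2$ and is therefore a disjoint union of paths and cycles, whose connected components can be computed in polynomial time; the maximum independent set in a path on $p$ vertices has size $\lceil p/2 \rceil$ and in a cycle on $q$ vertices has size $\lfloor q/2 \rfloor$, so summing these values across the components yields $\alpha(G')$ in polynomial time, and we answer yes iff $\alpha(G') \geq k$.

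The main obstacle is really only a careful bookkeeping of the trivial solutions in the very special sub-cases identified above; the interesting structural fact -- that any $2$-variable equation forces the associated ``conflict graph'' to have maximum degree at most $2$ -- is an immediate consequence of the observation that any linear equation determines $x_2$ uniquely from $x_1$ (and vice versa) when the coefficients are nonzero. Once the degenerate cases are isolated, the reduction to maximum independent set on a union of paths and cycles makes the polynomial-time solvability immediate.
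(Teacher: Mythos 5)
Your proof is correct and follows essentially the same route as the paper: build the conflict graph on $A$ whose edges are the non-trivial solutions, observe that (outside degenerate coefficient cases) each vertex has degree at most $2$, and compute the maximum independent set on the resulting disjoint union of paths and cycles. Your explicit treatment of the degenerate cases (a zero coefficient, or $a_1+a_2=0$ with $b=0$) is a welcome extra precaution that the paper's version glosses over, but the core argument is identical.
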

\proof
  Let $A = \{a_1,\ldots,a_n\} \subseteq \mathbb{Z}$, and let $k \in \mathbb{N}$.  We now construct $G$ to be the graph with vertex set $A$, where $a_ia_j \in E(G)$ precisely when $(a_i,a_j)$ is a non-trivial solution to $\LL$. 
	Note that we can construct $G$ in  time bounded by a polynomial function of $\size{A}$. The construction of $G$ ensures that a set $A' \subseteq A$ is $\LL$-free if and only if $A'$ is an independent set in $G$.

Notice that a vertex $x$ could lie in a loop. However, in this case $x$ is not adjacent to any other vertex in $G$. All other vertices in $G$ have degree at most $2$. Thus, $G$ is a collection of vertex-disjoint paths, cycles, isolated vertices and loops.
 The largest independent set in both a path and an even cycle on $t$ vertices is $\lceil t/2 \rceil$; the largest independent set in an odd cycle on $t$ vertices is $\lfloor t/2 \rfloor$.
So in time $\mathcal{O}(n)$ we can determine the size of the largest 
independent set in $G$, and thus whether $A$ contains an $\LL$-free subset of size $k$.
\endproof

%%%%%%%%%%%%%

\section{Approximating the size of the largest $\LL$-\textsc{Free Subset}}\label{sec:ap}

Thus far we have focussed on decision problems involving solution-free sets (``Does the set $A$ contain a solution-free set of a certain size?''), but it is also natural to consider a maximisation problem: ``What is the size of the largest solution-free subset of $A$?''  An efficient algorithm to answer the decision problem can clearly be used to solve the maximisation problem, as we can repeatedly run our decision algorithm with different target sizes; however, as we have demonstrated in Section \ref{sec:decision} that in many cases such an algorithm is unlikely to exist, it makes sense to ask whether we can efficiently approximate the optimisation problem.  We define the maximisation version of $\LL$-\textsc{Free Subset} formally as follows.

\begin{framed}
\noindent \textsc{Maximum $\LL$-Free Subset}\newline
\textit{Input:} A finite set $A \subseteq \mathbb Z$.\newline
\textit{Question:} What is the cardinality of the largest $\LL$-free subset $A' \subseteq A$?
\end{framed}

Given any instance $I$ of an optimisation problem, we denote by $\opt(I)$ the value of the optimal solution to $I$ (so, for example, the cardinality of the largest solution-free subset).  Given a constant $\rho > 1$, we say that an approximation algorithm for the maximisation problem has \emph{performance ratio $\rho$} if, given any instance $I$ of the problem, the algorithm will return a value $x$ such that
$$1 \leq \frac{\opt(I)}{x} \leq \rho.$$
Note that there is a trivial approximation algorithm for \textsc{Maximum Sum-Free Subset} with performance ratio 3: if we always return $|A|/3$ then, as $|A|/3 \leq \opt(A) \leq |A|$, we have $1 \leq \frac{\opt(A)}{|A|/3} \leq 3$ as required.

We might hope to improve on this to obtain, given arbitrary positive $\epsilon$, an approximation algorithm for \textsc{Maximum $\LL$-Free Subset} with performance ratio $1+ \epsilon$.  However, we will show in this section that in certain cases this is no easier than solving the problem exactly.  Specifically, we show that for a large family of 3-variable linear equations (including those defining sum-free and progression-free sets), there is no polynomial-time approximation scheme unless $\P=\NP$.  

A \emph{polynomial-time approximation scheme (PTAS)} for a maximisation problem is an algorithm which, given any instance $I$ of the problem and a constant $\epsilon >0$, returns, in polynomial-time, a value $x$ such that 
$$1 \leq \frac{\opt(I)}{x} \leq 1 + \epsilon.$$
Note that the exponent of the polynomial is allowed to depend on $\epsilon$. 

The complexity class $\APX$ contains all optimisation problems (whose decision version belongs to $\NP$) which can be approximated within some constant factor in polynomial time; this class includes problems which do not admit a PTAS unless $\P=\NP$, so one way to demonstrate that an optimisation problem is unlikely to admit a PTAS is to show that it is hard for the class $\APX$.  In order to show that a problem is $\APX$-hard (and so does not admit a PTAS unless $\P=\NP$), it suffices to give a PTAS reduction from another $\APX$-hard problem.  

\begin{adef}
Let $\Pi_1$ and $\Pi_2$ be maximisation problems.  A \emph{PTAS reduction from $\Pi_1$ to $\Pi_2$} consists of three polynomial-time computable functions $f$, $g$ and $\alpha$ such that:
\begin{enumerate}
\item for any instance $I_1$ of $\Pi_1$ and any constant error parameter $\epsilon$, $f$ produces an instance $I_2 = f(I_1,\epsilon)$ of $\Pi_2$;
\item if $\epsilon > 0$ is any constant and $y$ is any solution to $I_2$ such that 
$\frac{\textrm{opt}(I_2)}{|y|} \leq \alpha(\epsilon)$, then $x = g(I_1,y,\epsilon)$ is a solution to $I_1$ such that $\frac{\textrm{opt}(I_1)}{|x|} \leq 1 + \epsilon$.
\end{enumerate}
\end{adef}
 
We cannot immediately deduce results about the inapproximability of \textsc{Maximum $\LL$-Free Subset} from Corollary~\ref{cor:includes-A''} together the inapproximability of \textsc{Independent Set}, as the cardinality of the largest $\LL$-free subset in $A$ will in general be dominated by the cardinality of $A''$.  However, we can instead reduce from \textsc{Max IS-3}, the problem of finding the size of a maximum independent set in a graph of maximum degree 3, which was shown to be $\APX$-hard by Alimonti and Kann \cite{alimonti00}.  For this reduction we imitate the approach of Froese, Janj, Nichterlein and Niedermeier \cite{froese16}, who obtained a result analogous to Lemma \ref{useful1} when reducing 3-\textsc{Hitting Set} to the problem of finding a maximum subset of points in general position.

Corollary \ref{cor:includes-A''} implies that we have a polynomial-time reduction from \textsc{3-IS} to \textsc{$\LL$-Free Subset} (for suitable $\LL$) in which $|A''| \leq \frac{3|V|}{2}$. 
 Since it is also well-known that in any graph $G$ on $n$ vertices with maximum degree at most $\Delta$, every maximal independent set has cardinality at least $\frac{n}{\Delta + 1}$ (if the independent set is smaller than this, there must be some vertex which does not have any neighbour in the set and so can be added to the independent set), it follows that for every maximal independent set $U$ in $G$ we have $|U| \geq \frac{|V|}{4} \geq \frac{1}{6}|A''|$.

Using this observation, we can now define a PTAS reduction from \textsc{Max IS-3} to \textsc{Maximum $\LL$-Free Subset} for certain 3-variable equations.

\begin{lemma}
Let $\LL$ be a linear equation of the form $a_1x_1 + a_2x_2 = by$, where $a_1,a_2,b \in \mathbb{N}$ are fixed.  Then there is a PTAS reduction from \textsc{Max IS-3} to \textsc{Maximum $\LL$-Free Subset}.
\end{lemma}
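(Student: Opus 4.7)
The plan is to use Lemma~\ref{useful1} and Corollary~\ref{cor:includes-A''} almost directly, letting $f$ ignore $\epsilon$: given an instance $G=(V,E)$ of \textsc{Max IS-3}, let $f(G,\epsilon)$ be the set $A = A' \sqcup A''$ built from $G$ (treated as a $2$-uniform hypergraph) via Lemma~\ref{useful1}. Since $G$ has maximum degree $3$, we have $|A''|=|E|\leq 3|V|/2$, and since every maximum independent set in $G$ has size at least $|V|/4$, we get $\opt(G) \geq |V|/4 \geq |A''|/6$. By Corollary~\ref{cor:includes-A''}, $\opt(f(G,\epsilon)) = |A''| + \opt(G)$.

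For the recovery map $g$, given any $\LL$-free subset $y\subseteq A$, I would first apply the exchange argument in the proof of Corollary~\ref{cor:includes-A''} to transform $y$ in polynomial time into an $\LL$-free subset $y'\supseteq A''$ with $|y'|\geq |y|$; then set $g(G,y,\epsilon) := \phi_V(y'\cap A')$, which by Corollary~\ref{cor:is-reduction} is an independent set in $G$ of size $|y'|-|A''|\geq |y|-|A''|$.

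It remains to choose $\alpha(\epsilon)$. Writing $\text{OPT}:=\opt(G)$ and assuming $\opt(f(G,\epsilon))/|y|\leq \alpha(\epsilon)$, we have
\[
|g(G,y,\epsilon)| \;\geq\; |y|-|A''| \;\geq\; \frac{|A''|+\text{OPT}}{\alpha(\epsilon)}-|A''| \;=\; \frac{\text{OPT}-(\alpha(\epsilon)-1)|A''|}{\alpha(\epsilon)}.
\]
Requiring $\text{OPT}/|g(G,y,\epsilon)|\leq 1+\epsilon$ and rearranging gives the sufficient condition $(1+\epsilon-\alpha(\epsilon))\text{OPT} \geq (1+\epsilon)(\alpha(\epsilon)-1)|A''|$. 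Using $|A''|\leq 6\,\text{OPT}$, this is implied by $(1+\epsilon-\alpha(\epsilon))\geq 6(1+\epsilon)(\alpha(\epsilon)-1)$, and writing $\delta=\alpha(\epsilon)-1$ the condition becomes $\delta\leq \epsilon/(7+6\epsilon)$. So I would set $\alpha(\epsilon):=1+\epsilon/(7+6\epsilon)$, which is polynomial-time computable and $>1$ for every $\epsilon>0$.

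The only potentially subtle step is verifying that the degree-$3$ hypothesis is used precisely where needed: it is what bounds $|A''|$ linearly in $\text{OPT}$, and without this the additive slack $|A''|$ in $\opt(f(G,\epsilon))=|A''|+\opt(G)$ could swamp the optimisation value and destroy any multiplicative guarantee we pull back from $\LL$-\textsc{Free Subset} to \textsc{Max IS-3}. All three maps $f,g,\alpha$ are clearly polynomial-time computable, so the three conditions in the definition of a PTAS reduction are met.
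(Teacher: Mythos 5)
Your proposal is correct and follows essentially the same route as the paper: the same construction $f$ via Lemma~\ref{useful1}, the same recovery map $g$ via the exchange argument of Corollary~\ref{cor:includes-A''}, and the same key bounds $|A''|=|E|\leq 3|V|/2$ and the degree-$3$ lower bound $|V|/4$. The only (harmless) differences are that you invoke $\opt(G)\geq |V|/4$ directly rather than making the extended set maximal so as to bound $|I|\geq |V|/4$ as the paper does, and you consequently land on $\alpha(\epsilon)=1+\epsilon/(7+6\epsilon)$ in place of the paper's $1+\epsilon/7$; either choice is a valid PTAS reduction.
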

\begin{proof}
We define the functions $f$, $g$ and $\alpha$ as follows.  

First, we let $f$ be the function which, given an instance $G$ of \textsc{Max IS-3} (where $G=(V,E)$) and any $\epsilon > 0$, outputs the set $A = \phi_V^{-1}(V) \cup \phi_E^{-1}(E) \subseteq \mathbb{N}$ described in Lemma \ref{useful1}; we know from Lemma \ref{useful1} that we can construct this set in polynomial time.

Next suppose that $B$ is an $\LL$-free subset in $A$.  We can construct in polynomial time a set $\widetilde{B}$, with $|\widetilde{B}| \geq |B|$, such that
\begin{enumerate}
\item $\phi_E^{-1}(E) \subseteq \widetilde{B}$; and
\item $\widetilde{B}$ is a maximal $\LL$-free subset of $A$.
\end{enumerate}
If $B$ fails to satisfy the first condition, we can use the method of Corollary \ref{cor:includes-A''} to obtain a set with this property, and if the resulting set is not maximal we can add elements greedily until this condition is met.  We now define $g$ to be the function which, given an $\LL$-free set $B \subseteq A$ and any $\epsilon > 0$, outputs $\phi_V(\widetilde{B} \setminus \phi_E^{-1}(E))$.

Finally, we define $\alpha$ to be the function $\epsilon \mapsto 1 + \frac{\epsilon}{7}$.  Let us denote by $\opt(G)$ the cardinality of the maximum independent set in $G$, and by $\opt(A)$ the cardinality of the largest $\LL$-free subset in $A$.  Note that $\opt(A) = \opt(G) + |E|$.  To complete the proof, it suffices to demonstrate that, whenever $B$ is an $\LL$-free subset in $A$ such that $\frac{\opt(A)}{|B|} \leq \alpha(\epsilon) = 1 + \frac{\epsilon}{7}$, we have $\frac{\opt(G)}{|I|} \leq 1 + \epsilon$, where $I := \phi_V(\widetilde{B} \setminus \phi_E^{-1}(E))$.  Observe that
\begin{align*}
\frac{\opt(A)}{|B|} &\leq 1 + \frac{\epsilon}{7} \\
\Rightarrow \frac{\opt(A)}{|\widetilde{B}|} &\leq 1 + \frac{\epsilon}{7} \\
\Rightarrow \frac{|E| + \opt(G)}{|E| + |I|} & \leq 1 + \frac{\epsilon}{7} \\
\Rightarrow \frac{|E| + \opt(G)}{|I|} & \leq \left(1 + \frac{\epsilon}{7}\right)\left(\frac{|E| + |I|}{|I|}\right) \\
\Rightarrow \frac{\opt(G)}{|I|} & \leq \left(1 + \frac{\epsilon}{7}\right)\frac{|E|}{|I|} + \left(1 + \frac{\epsilon}{7}\right) - \frac{|E|}{|I|} \\
& = \frac{\epsilon}{7}\frac{|E|}{|I|} + 1 + \frac{\epsilon}{7}.
\end{align*}
Since we know that $|E| \leq \frac{3|V|}{2}$ and, by our assumptions on maximality of $\widetilde{B}$ and hence $I$, we also know that $|I| \geq \frac{|V|}{4}$, it follows that $\frac{|E|}{|I|} \leq 6$.  We can therefore conclude that
$$\frac{\opt(A)}{|B|} \leq 1 + \frac{\epsilon}{7} 
\Rightarrow \frac{\opt(G)}{|I|} \leq 6\frac{\epsilon}{7} + 1 + \frac{\epsilon}{7} = 1 + \epsilon,$$
as required.
\end{proof}

We now obtain our main inapproximability result as an immediate corollary.

\begin{thm}
Let $\LL$ be a linear equation of the form $a_1x_1 + a_2x_2 = by$, where $a_1,a_2,b \in \mathbb{N}$ are fixed.  Then \textsc{Maximum $\LL$-Free Subset} is $\APX$-hard.
\end{thm}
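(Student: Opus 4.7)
The plan is to obtain this theorem as an immediate consequence of the preceding lemma together with the known $\APX$-hardness of \textsc{Max IS-3}. Recall from the discussion before the lemma that Alimonti and Kann~\cite{alimonti00} established that \textsc{Max IS-3} is $\APX$-hard, and that $\APX$-hardness is preserved under PTAS reductions: if $\Pi_1$ is $\APX$-hard and there is a PTAS reduction from $\Pi_1$ to $\Pi_2$, then any PTAS for $\Pi_2$ could be composed with the reduction to yield a PTAS for $\Pi_1$, contradicting its $\APX$-hardness (unless $\P=\NP$).

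First I would note that \textsc{Maximum $\LL$-Free Subset} is in $\NP$ in the relevant sense, since by the discussion in Section~\ref{sec:complexity} we can verify in polynomial time whether a candidate subset is $\LL$-free. Then I would simply invoke the PTAS reduction from \textsc{Max IS-3} to \textsc{Maximum $\LL$-Free Subset} constructed in the preceding lemma, which applies to any equation of the form $a_1 x_1 + a_2 x_2 = by$ with $a_1, a_2, b \in \mathbb{N}$. Composing this reduction with the known $\APX$-hardness of \textsc{Max IS-3} yields $\APX$-hardness of \textsc{Maximum $\LL$-Free Subset} for every such $\LL$.

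There is essentially no technical obstacle here: the work has already been done in establishing the PTAS reduction. The only thing worth being careful about is that the definition of PTAS reduction used (which requires a suitable function $\alpha$ with $\alpha(\epsilon) > 1$) is indeed the standard one that preserves membership of $\APX$ and hence transfers $\APX$-hardness; this is a routine check using the definitions from, for example, \cite{flumgrohe}. Hence the proof can be written as a single sentence, citing the previous lemma and~\cite{alimonti00}.
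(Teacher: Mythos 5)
Your proposal is correct and matches the paper exactly: the theorem is stated as an immediate corollary of the preceding lemma (the PTAS reduction from \textsc{Max IS-3}) together with the $\APX$-hardness of \textsc{Max IS-3} due to Alimonti and Kann \cite{alimonti00}, which is precisely your argument.
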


%%%%%%%%%%%%%
\section{$\LL$-free subsets of arbitrary sets of integers}
\label{sec:arbitrary-sets}

In much of the rest of this paper, we prove complexity results which hold whenever we can guarantee that our input set $A$ will contain a reasonably large $\LL$-free subset.  We already know that this is the case for sum-free subsets (in which case an arbitrary input set $A$ of non-zero integers must contain a sum-free subset of size at least $(|A|+1)/3$); in this section we extend this result to a much larger family of linear equations, proving that, given any homogeneous non-translation-invariant linear equation $\LL$, every finite set of non-zero integers contains an $\LL$-free set of linear size.  Note that a homogeneous linear equation $\LL$ is non-translation-invariant if and only if it can be written in the form $a_1x_1+\dots+a_{k}x_k=b_1y_1+\dots+b_{\ell} y_{\ell}$ for some fixed $a_i,b_i \in \mathbb N$ where $a_1+\dots +a_k \neq b_1+\dots+b_{\ell}$.

For this we use the following simple observation. 
\begin{obs}\label{obs1}
Consider a homogeneous linear equation $\LL$ of the form $a_1x_1+\dots+a_{k}x_k=b_1y_1+\dots+b_{\ell} y_{\ell}$ for some fixed $a_i,b_i \in \mathbb N$ where $a_1+\dots +a_k > b_1+\dots+b_{\ell}$.
Then the interval 
$$I:=\left [\left \lfloor \frac{(b_1+\dots + b_{\ell})n}{a_1+\dots +a_k}\right \rfloor+1, n  \right ]$$
is $\LL$-free. 
\end{obs}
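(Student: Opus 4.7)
The plan is to show something stronger than being $\LL$-free: namely, that the equation $\LL$ has \emph{no} solution whatsoever with all variables in $I$. This trivially implies the absence of non-trivial solutions.

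First I would set $S_a := a_1+\cdots+a_k$ and $S_b := b_1+\cdots+b_\ell$, and note the hypothesis $S_a > S_b$. Then the key observation is that every element $z \in I$ satisfies the two-sided bound
$$\frac{S_b\, n}{S_a} \;<\; z \;\leq\; n,$$
where the strict lower bound comes from the standard fact that $\lfloor y \rfloor + 1 > y$ for every real $y$, applied with $y = S_b n / S_a$.

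Next, suppose for contradiction that $(x_1,\ldots,x_k,y_1,\ldots,y_\ell)$ is a solution to $\LL$ with every coordinate in $I$. Using the lower bound on each $x_i$ together with $a_i \in \mathbb{N}$, the left-hand side satisfies
$$a_1 x_1 + \cdots + a_k x_k \;>\; (a_1+\cdots+a_k)\cdot \frac{S_b\, n}{S_a} \;=\; S_b\, n.$$
Using the upper bound on each $y_j$ together with $b_j \in \mathbb{N}$, the right-hand side satisfies
$$b_1 y_1 + \cdots + b_\ell y_\ell \;\leq\; (b_1+\cdots+b_\ell)\, n \;=\; S_b\, n.$$
Combining these contradicts the equality required by $\LL$, so no solution exists in $I$ and in particular $I$ is $\LL$-free.

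There is essentially no obstacle here: the argument is a one-line pigeonhole-type inequality, and the only mild care needed is to ensure that the strict inequality from the floor function is preserved when summing (which it is, since the $a_i$ are positive integers and so at least one $a_i x_i$ contributes a strict gain over $a_i \cdot S_b n / S_a$).
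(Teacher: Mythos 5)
Your proof is correct and is essentially the paper's own argument: the paper dispatches the observation in one line by noting that $(a_1+\dots+a_k)\min(I) > (b_1+\dots+b_\ell)\max(I)$, which is exactly the inequality you establish via the bound $S_b n/S_a < z \leq n$ for $z \in I$. You have simply written out the same comparison of the two sides in more detail.
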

Note that Observation~\ref{obs1} is immediate since $(a_1+\dots +a_k) \min (I) >(b_1+\dots+b_{\ell}) \max(I)$.
\begin{thm}\label{lamthm}
Consider a non-translation-invariant homogeneous linear equation $\LL$.
There exists some $\lambda=\lambda(\LL)>0$ such that, if $n \in \mathbb N$ is sufficiently large, then any set $Z\subseteq \mathbb Z\setminus \{0\}$ so that $|Z|=n$ contains
an $\LL$-free subset of size more than $\lambda n$.
\end{thm}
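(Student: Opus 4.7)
The plan is to adapt Erd\H{o}s's classical random-dilation argument (used to produce sum-free subsets) by exploiting non-translation-invariance to build a dense $\LL$-free subset of $\mathbb{Z}/p\mathbb{Z}$ for every large prime $p$. Write $\LL$ in the form $a_1x_1+\dots+a_kx_k = b_1 y_1 + \dots + b_\ell y_\ell$ with $a_i, b_j \in \mathbb{N}$, and set $S_a := a_1+\dots+a_k$ and $S_b := b_1+\dots+b_\ell$. Non-translation-invariance gives $S_a \neq S_b$; without loss of generality $S_a > S_b$.

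The first step is to pick real numbers $0 < \gamma < \delta < 1$ satisfying
\[ S_a \gamma > S_b \delta \quad \text{and} \quad S_a \delta - S_b \gamma < 1; \]
such a choice exists (precisely because $S_a > S_b$) by taking $\epsilon \in (0, 1/(S_a+S_b))$, choosing $\gamma$ in the nonempty open interval $(S_b\epsilon/(S_a-S_b),\ (1-S_a\epsilon)/(S_a-S_b))$, and setting $\delta := \gamma + \epsilon$. For any prime $p$ let $T_p := \{t \in \mathbb{Z} : \gamma p < t < \delta p\} \subseteq \{1,\dots,p-1\}$, so that $|T_p|/p \to \lambda_0 := \delta - \gamma > 0$ as $p \to \infty$. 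For any $t_1,\dots,t_k,s_1,\dots,s_\ell \in T_p$, the integer $\sum_i a_i t_i - \sum_j b_j s_j$ lies in the open interval $(S_a\gamma p - S_b\delta p,\ S_a\delta p - S_b\gamma p) \subseteq (0, p)$, so it cannot be a multiple of $p$. Hence $T_p$ contains no solution of $\LL$ modulo $p$ whatsoever, whether trivial or not.

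Given $Z \subseteq \mathbb{Z} \setminus \{0\}$ with $|Z| = n$, Bertrand's postulate supplies a prime $p$ with $\max^*(Z) < p \leq 2\max^*(Z)$, so every element of $Z$ is invertible modulo $p$. For $\alpha$ chosen uniformly at random from $(\mathbb{Z}/p\mathbb{Z})^*$, each $\alpha z \bmod p$ is uniform on $(\mathbb{Z}/p\mathbb{Z})^*$, giving
\[ \Prob[\alpha z \bmod p \in T_p] = \frac{|T_p|}{p-1} \geq \lambda_0 - O(1/p). \]
Linearity of expectation then produces some $\alpha$ for which $Z_\alpha := \{z \in Z : \alpha z \bmod p \in T_p\}$ has size at least $(\lambda_0 - O(1/p))\, n$. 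Because $|Z|=n$ forces $\max^*(Z) \geq \lceil n/2 \rceil$, we have $1/p = O(1/n)$, so for $n$ sufficiently large, $|Z_\alpha| > \lambda n$ with $\lambda := \lambda_0/2 > 0$.

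Finally, $Z_\alpha$ is $\LL$-free: any non-trivial solution of $\LL$ with elements in $Z_\alpha$, after multiplying the equation through by $\alpha$ and reducing modulo $p$, would yield a solution of $\LL$ in $T_p$ modulo $p$, contradicting the second step. The main obstacle is that second step: the inequalities $S_a\gamma > S_b\delta$ and $S_a\delta - S_b\gamma < 1$ can be satisfied simultaneously exactly when $S_a > S_b$, so non-translation-invariance of $\LL$ is used in an essential way. Once a dense $\LL$-free set modulo $p$ is in hand, the dilation/averaging step concludes the proof immediately.
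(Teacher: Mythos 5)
Your proof is correct and follows essentially the same strategy as the paper's: construct a positive-density subset of $\mathbb{Z}_p$ containing no solutions to $\LL$ at all (the paper gets this from Observation~\ref{obs1} by rescaling an $\LL$-free interval of $[\lfloor p/c\rfloor]$, you get it by choosing the interval $(\gamma p,\delta p)$ directly so that the weighted sums neither coincide nor wrap around modulo $p$ — both hinge on $S_a\neq S_b$), and then apply the Erd\H{o}s random-dilation argument. The only cosmetic differences are your explicit choice of $\gamma,\delta$ and your use of $\max^*(Z)\geq n/2$ to control the $O(1/p)$ error, neither of which changes the substance.
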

\proof
Suppose that $\LL$ is of the form $a_1x_1+\dots+a_{k}x_k=b_1y_1+\dots+b_{\ell} y_{\ell}$ for some fixed $a_i,b_i \in \mathbb N$ where $a_1+\dots +a_k \neq b_1+\dots+b_{\ell}$.  Observation~\ref{obs1} implies that there is some $\lambda'=\lambda'(\LL)>0$ such that, if $m' \in \mathbb N$ is sufficiently large, then $[m']$ contains an $\LL$-free subset
of size at least $\lambda 'm'$. We say a subset $S$ of a group $G$ is \emph{$\LL$-free} if $S$ contains \emph{no} solutions to $\LL$.
Set $c:=\max\{(a_1+\dots+a_k), (b_1+\dots +b_{\ell})\}$.

\begin{claim}\label{c11}
There is some  $\lambda:=\lambda'/(2c)>0$ such that, if $m \in \mathbb N$ is sufficiently large, then $\mathbb Z_m$ contains an $\LL$-free subset
of size at least $\lambda m$ that does not contain the zero element.
\end{claim}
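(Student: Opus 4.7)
The plan is to lift the integer $\LL$-free interval guaranteed by the preceding paragraph into $\mathbb{Z}_m$ after rescaling so that the natural arithmetic in $\mathbb{Z}_m$ does not wrap around. By the symmetry of the two sides of $\LL$, I may assume without loss of generality that $a_1+\dots+a_k > b_1+\dots+b_\ell$, so that $c = a_1+\dots+a_k$.

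First I would set $n := \lfloor (m-1)/c \rfloor$ and observe that, for $m$ sufficiently large, $n$ is also sufficiently large, so by the statement recorded at the start of the proof (which is itself a direct consequence of Observation~\ref{obs1}) there is an $\LL$-free subset $I \subseteq [n]$ with $|I|\geq \lambda' n$. I will regard $I$ as a subset of $\mathbb{Z}_m$ via the injection $[1,n]\hookrightarrow \mathbb{Z}_m$, which is well-defined because $n<m$; note that the zero element of $\mathbb{Z}_m$ is not in the image.

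Next I would verify that this image is $\LL$-free in the group sense. Suppose $(x_1,\dots,x_k,y_1,\dots,y_\ell)\in I^{k+\ell}$ satisfies
$$a_1x_1+\dots+a_kx_k \;\equiv\; b_1y_1+\dots+b_\ell y_\ell \pmod{m}.$$
Viewed as integers, both sides lie in $[1,cn]\subseteq [1,m-1]$ by the choice of $n$, so their difference lies in the open interval $(-m,m)$. Hence the congruence forces equality over $\mathbb{Z}$, contradicting $\LL$-freeness of $I$ in $\mathbb{Z}$.

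Finally the size bound is routine: $|I|\geq \lambda' n = \lambda'\lfloor (m-1)/c\rfloor \geq \lambda' m/(2c) = \lambda m$, where the middle inequality holds once $m$ is large enough in terms of $c$. The only step requiring care is controlling the wraparound, which is what dictates the scaling by $c$ (and hence the factor $2c$ in $\lambda$) and which is what makes the integer $\LL$-freeness transfer to the group setting; once this is in place, the argument is essentially bookkeeping.
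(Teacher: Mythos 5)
Your argument is correct and is essentially the paper's own proof: both rescale to an interval of length roughly $m/c$ so that neither side of $\LL$ can exceed $m$, forcing any congruence mod $m$ among elements of that interval to be an honest equality over $\mathbb{Z}$, and then invoke the integer result derived from Observation~\ref{obs1}. The only differences are cosmetic (your $\lfloor (m-1)/c\rfloor$ versus the paper's $\lfloor m/c\rfloor$, and your slightly more explicit wraparound bound).
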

\noindent To prove the claim, suppose that $m\in \mathbb N$ is sufficiently large and define $m':=\lfloor m/c\rfloor$. 
Note  a set $S\subseteq \{1,\dots, m'\}$ is an $\LL$-free subset of $\mathbb Z_m$ if and only if $S$ is an $\LL$-free subset of $[m']$.
Indeed, suppose for a contradiction there is a solution $(x_1,\dots, x_k,y_1,\dots, y_{\ell})$ to $\LL$ in $\{1,\dots, m'\}\subseteq \mathbb Z_m$ that is not a solution to $\LL$ when viewed as a subset of $[m']$.
So viewing $x_1,\dots, x_k,y_1,\dots, y_{\ell}$ as integers we have that $a_1x_1+\dots+a_{k}x_k\not = b_1y_1+\dots+b_{\ell} y_{\ell}$ however, $a_1x_1+\dots+a_{k}x_k\equiv b_1y_1+\dots+b_{\ell} y_{\ell}\mod m$. Thus the difference between $a_1x_1+\dots+a_{k}x_k$ and $b_1y_1+\dots+b_{\ell} y_{\ell}$ is at least $m$. This yields a contradiction since, by the definition of $m'$, neither of these numbers is bigger than $m$.

Thus, as $[m']$ contains an $\LL$-free subset of size at least $\lambda 'm'\geq \lambda m$, $\mathbb Z_m$ contains an $\LL$-free subset of size at least $\lambda m$ avoiding the zero element, proving the claim.

\smallskip

The rest of the proof modifies the argument presented in~\cite[Theorem 1.4.1]{alonspencer} that shows every set of $n$ non-zero integers contains a sum-free subset of size more than $n/3$.
Let $n \in \mathbb N$ be sufficiently large and consider any set $Z=\{z_1,\dots, z_n\}$ of non-zero integers. Let $p$ be a prime so that $p>2\max(Z)$. 
Since $n$ is sufficiently large and $p>n$, by the claim we have that $\mathbb Z_p$ contains an $\LL$-free subset $S$ so that $|S|\geq \lambda  p$ and additionally $0\not \in S$.

Choose an integer $x$ uniformly at random from $\{1,2,\dots, p-1\}$, and define $d_1,\dots, d_n$ by $d_i \equiv x z_i \mod p$ where $0\leq d_i <p$.
For every fixed $1\leq i \leq n$, as $x$ ranges over all numbers $1,2,\dots, p-1$, then $d_i$ ranges over all non-zero elements of $\mathbb Z_p$. Therefore,
$\mathbb P(d_i \in S)=|S|/(p-1)>\lambda  $.
So the expected number of elements $z_i$ such that $d_i \in S$ is more than $\lambda n$.
Thus, there is some choice of $x$ with $1\leq x<p$ and a subset $Z'\subseteq Z$ of size $|Z'|>\lambda n$ such that $xz_i$ (mod $p$) $\in S$ for all $z_i \in Z'$.
Since $S$ is $\LL$-free in $\mathbb Z_p$, and $\LL$ is homogeneous, this implies $Z'$ is an $\LL$-free set of integers, as desired. 
\endproof

In the case when $\LL$ is translation-invariant, Ruzsa~\cite{ruzsa} observed that the largest $\LL$-free subset of $[n]$ has size $o(n)$.  So one cannot prove an analogue of Theorem~\ref{lamthm} for such equations $\LL$.

The next result follows immediately from  Theorem~\ref{lamthm}.

\begin{thm}\label{lamthm2}
Consider a non-translation-invariant homogeneous linear equation $\LL$.
There exists some $\lambda=\lambda(\LL)>0$ such that every finite set $Z\subseteq \mathbb Z\setminus \{0\}$  contains
an $\LL$-free subset of size more than $\lambda |Z|$.
\end{thm}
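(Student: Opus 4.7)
The plan is to reduce Theorem~\ref{lamthm2} to Theorem~\ref{lamthm} by handling the ``small set'' case with a trivial singleton argument. Let $\lambda_0 = \lambda(\LL)$ and $n_0 = n_0(\LL)$ be the constant and threshold provided by Theorem~\ref{lamthm}, so that any set of $n \geq n_0$ non-zero integers has an $\LL$-free subset of size greater than $\lambda_0 n$. I would then define the new constant for Theorem~\ref{lamthm2} as $\lambda := \min\{\lambda_0, 1/n_0\}/2$ (halving just to guarantee strict inequality comfortably).

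For $|Z| \geq n_0$, Theorem~\ref{lamthm} directly produces an $\LL$-free subset of size strictly larger than $\lambda_0 |Z| \geq \lambda |Z|$, so there is nothing to do.

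For the small case $|Z| < n_0$, I would take any singleton $\{z\}\subseteq Z$ and claim it is already $\LL$-free. Since $\LL$ is homogeneous, it can be written $c_1 z_1 + \dots + c_m z_m = 0$ with $c_i \in \{a_1,\dots,a_k,-b_1,\dots,-b_\ell\}$ and non-translation-invariance gives $\sum_i c_i \neq 0$. Any candidate solution supported on $\{z\}$ must assign every $z_i = z$, but then the left-hand side equals $(\sum_i c_i)\,z$, which is nonzero since $z \neq 0$; so there is no solution at all in $\{z\}$ and in particular no non-trivial one. Since $|Z| < n_0$ we have $\lambda |Z| < \lambda n_0 \leq 1/2 < 1$, so the singleton satisfies the required bound.

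The only potential pitfall is the justification that singletons of non-zero integers are $\LL$-free, which hinges on combining the homogeneity with the non-translation-invariance hypothesis; this is exactly where the theorem's assumptions are used, but the argument is a one-liner. Everything else is bookkeeping, which matches the authors' remark that the result follows immediately from Theorem~\ref{lamthm}.
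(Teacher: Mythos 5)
Your proposal is correct and matches the paper's intent: the paper simply asserts that Theorem~\ref{lamthm2} follows immediately from Theorem~\ref{lamthm}, and your argument — apply Theorem~\ref{lamthm} for $|Z|\geq n_0$, and for $|Z|<n_0$ note that any singleton $\{z\}$ with $z\neq 0$ is $\LL$-free because homogeneity and non-translation-invariance force $(\sum_i c_i)z\neq 0$ — is exactly the routine bookkeeping being elided. The choice $\lambda=\min\{\lambda_0,1/n_0\}/2$ correctly guarantees the strict inequality in both cases.
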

Note that is necessary to restrict our attention to sets $Z\subseteq \mathbb Z\setminus \{0\}$ here since $Z:=\{0\}$ does not contain a non-empty $\LL$-free subset.

It is natural to ask how large $\lambda(\LL)$ can be in the previous theorem.  Let $\mathcal C  (\LL)$ denote the set of all positive reals $\kappa$ so that Theorem~\ref{lamthm2} holds with $\kappa$ playing the role of $\lambda$, and define $\mathcal C '  (\LL)$ analogously now with respect to Theorem~\ref{lamthm}. We claim that $\mathcal C  (\LL)=\mathcal C '  (\LL)$. It is immediate that $\mathcal C  (\LL)\subseteq \mathcal C '  (\LL)$. To see that there is no $\lambda \in \mathcal C '  (\LL)
\setminus \mathcal C  (\LL)$ consider the following observation:
Suppose $Z\subseteq \mathbb Z\setminus \{0\}$ is such that it does not contain an $\LL$-free subset of size more than $\lambda |Z|$ for some $\lambda >0$. Set $z:=|Z|$. Then for every $n \in\mathbb N$ there is a set $Z'\subseteq \mathbb Z\setminus\{0\}$ of size $zn$
such that it does not contain an $\LL$-free subset of size more than $\lambda |Z'|$. Indeed, writing $cZ$ as shorthand for $\{cz: z \in Z\}$, one can choose $Z'$ to be the union of $c_1Z, \dots  ,c_n Z$ where the $c_i$s are positive integers chosen to ensure the sets $c_iZ$ are pairwise disjoint. (Notice we required that $0 \not \in  Z$ to ensure this.)

Define
$$\kappa (\LL):=\sup (\mathcal C(\LL)).$$

Write $\LL$ as $a_1x_1+\dots+a_{k}x_k=b_1y_1+\dots+b_{\ell} y_{\ell}$ for some fixed $a_i,b_i \in \mathbb N$ where $a_1+\dots +a_k > b_1+\dots+b_{\ell}$.
We remark that it is easy to check  in the statement of Theorem~\ref{lamthm}, and therefore Theorem~\ref{lamthm2}, one can set 
\begin{align}\label{quote}
\lambda (\LL)=\frac{1}{2c}\left (1-\frac{b_1+\dots+b_\ell}{a_1+\dots+a_k}\right )
\end{align}
where here we define  $c:=\max\{(a_1+\dots+a_k), (b_1+\dots +b_{\ell})\}$.
That is, $\kappa (\LL)\geq \frac{1}{2c}\left (1-\frac{b_1+\dots+b_\ell}{a_1+\dots+a_k}\right )$.

In the case when $\LL$ is $x+y=z$ we know that $\kappa(\LL)=1/3$ and this supremum is attained. Indeed, recall that every set of $n$ non-zero integers has a sum-free subset of size at least $(n+1)/3$ \cite{alonkleitman} whilst  there are sets of positive integers $A$ of size $n$ such that $A$ does not contain any sum-free subset of size greater than $n/3+o(n)$ \cite{egm}.
It would be interesting to determine $\kappa (\LL)$ for other equations $\LL$.
\begin{problem}
Determine $\kappa (\LL)$ for  non-translation-invariant homogeneous linear equations $\LL$.
\end{problem}

\section{Parameterised complexity of the decision problem}
\label{sec:param-dec}

In this section we consider the complexity of $\LL$-\textsc{Free Subset} with respect to two natural parameterisations, namely the number of elements in the sum-free subset ($k$) and the number of elements \emph{not} in this subset ($|A| - k$).

First, it is straightforward, using the results of Section \ref{sec:arbitrary-sets}, to see that the problem is in $\FPT$ when parameterised by $k$, whenever $\LL$ satisfies the conditions of Theorem \ref{lamthm2}.

\begin{prop}
Let $\LL$ be a non-translation-invariant homogeneous linear equation.  Then $\LL$-\textsc{Free Subset}, parameterised by $k$, is in $\FPT$.
\end{prop}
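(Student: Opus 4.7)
The plan is a straightforward win-win argument based on Theorem~\ref{lamthm2}. Let $\lambda = \lambda(\LL) > 0$ be the constant it supplies. First I would pass from $A$ to $A \setminus \{0\}$: the remark immediately after Theorem~\ref{lamthm2} notes that $\{0\}$ contains no non-empty $\LL$-free subset, so no $\LL$-free subset of $A$ can contain the element $0$, and removing it does not change the answer to the instance.

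Next I would branch on the size of $A$. If $|A| > k/\lambda$, then Theorem~\ref{lamthm2} guarantees an $\LL$-free subset $S \subseteq A$ with $|S| > \lambda|A| > k$. Since every subset of an $\LL$-free set is $\LL$-free, any $k$-element subset of $S$ witnesses a YES-instance; crucially, because we only have to \emph{decide} existence, we do not actually need to construct $S$, so we simply output YES. In the remaining case $|A| \leq k/\lambda$, the number of $k$-element subsets of $A$ is at most $\binom{\lceil k/\lambda\rceil}{k}$, a function of $k$ alone. I would enumerate all of these subsets and apply the naive polynomial-time $\LL$-freeness test from Section~\ref{sec:complexity} to each, outputting YES iff some subset passes.

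The total running time is bounded by $f(k)\cdot \mathrm{poly}(\size{A})$ for a computable function $f$, which is the definition of an fpt-algorithm. All the genuine combinatorial content sits in Theorem~\ref{lamthm2}; once that linear-size guarantee is in hand the fpt-algorithm is immediate, so there is no real obstacle beyond invoking that result (and handling the trivial $0 \in A$ technicality).
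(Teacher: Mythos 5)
Your proposal is correct and follows essentially the same argument as the paper: strip out $0$, then either answer YES immediately via the linear lower bound of Theorem~\ref{lamthm2} when $|A| > k/\lambda$, or brute-force over the $f(k)$-many $k$-subsets otherwise. No gaps.
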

\begin{proof}
By Theorem \ref{lamthm2} and equation (\ref{quote}), we know that there exists an explicit constant $\lambda = \lambda(\LL) > 0$ such that any finite set $A \subset \mathbb{Z}\setminus \{0\}$  contains an $\LL$-free subset of size at least $\lambda|A|$.
Let $(A,k)$ be the input to  $\LL$-\textsc{Free Subset}.  We first set $A' := A \setminus \{0\}$: note that $|A'| \geq |A| - 1$, and $A'$ contains an $\LL$-free subset of size $k$ if and only if $A$ does, as no $\LL$-free subset of $A$ can contain $0$.  There are two cases:
\begin{enumerate}
\item $|A'| \leq \frac{k}{\lambda}$: in this case we can solve the problem by means of a brute-force search in time bounded by a function of $k$ only;
\item $|A'| > \frac{k}{\lambda}$: in this case we can immediately return the answer YES by the choice of $\lambda$.
\end{enumerate}
We therefore obtain an fpt-algorithm with respect to the parameter $k$ for $\LL$-\textsc{Free Subset}, by first considering the cardinality of $A'$ to determine which of the two cases is relevant, and then applying the appropriate method.
\end{proof}

We now argue that the problem is also in $\FPT$ with respect to the dual parameterisation.  We have seen that, subject to certain conditions on $\LL$, we can reduce an appropriate version of \textsc{Hitting Set} to $\LL$-\textsc{Free Subset}; we now show that we can also reduce in the opposite direction, from $\LL$-\textsc{Free Subset} to an appropriate (different) version of \textsc{Hitting Set}. 

\begin{lemma}
\label{lma:to-HS}
Let $\LL$ be any fixed linear equation with $\ell$ variables, and let $A \subseteq \mathbb{Z}$ be finite.  Then we can construct, in time polynomial in $\size{A}$, a hypergraph $G$ on $|A|$ vertices in which every edge contains at most $\ell$ vertices, such that there is a one-to-one correspondence between $\LL$-free subsets of $A$ of cardinality $k$ and hitting sets in $G$ of cardinality $|A|-k$.
\end{lemma}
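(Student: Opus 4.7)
The plan is to go in the reverse direction of Lemma~\ref{useful1}: start from $A$, enumerate the non-trivial solutions of $\LL$, and encode each one as a hyperedge. Concretely, I will take the vertex set of $G$ to be $A$ itself (so $|V(G)|=|A|$), and for every non-trivial solution $(x_1,\ldots,x_\ell)\in A^\ell$ of $\LL$, I will add the \emph{set} $\{x_1,\ldots,x_\ell\}$ as a hyperedge of $G$. Because the $x_i$ may repeat, this set has at most $\ell$ distinct elements, which is exactly the bound we need on the edge sizes.

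To verify the construction can be carried out in time polynomial in $\size{A}$, I will appeal to the discussion in Section~\ref{sec:complexity}: there are $|A|^\ell$ ordered $\ell$-tuples of elements of $A$, and since $\ell$ and the coefficients of $\LL$ are constants, each tuple can be checked for being a non-trivial solution in time polynomial in $\size{A}$. (Identifying trivial solutions only requires inspecting the partition structure of the tuple, which depends on $\ell$ alone.) Iterating over all tuples and recording the distinct resulting sets therefore fits into a polynomial-time procedure.

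The core of the proof is the following equivalence, which I will establish directly from the definitions:
\begin{equation*}
S\subseteq A \text{ is } \LL\text{-free} \iff A\setminus S \text{ is a hitting set of } G.
\end{equation*}
Indeed, $S$ fails to be $\LL$-free precisely when there exists a non-trivial solution $(x_1,\ldots,x_\ell)$ with every $x_i\in S$; equivalently, there is a hyperedge $\{x_1,\ldots,x_\ell\}$ of $G$ disjoint from $A\setminus S$; equivalently, $A\setminus S$ fails to be a hitting set. Taking contrapositives gives the stated equivalence.

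Finally, to extract the one-to-one correspondence, I will use the tautological bijection $S\mapsto A\setminus S$ between subsets of $A$ of cardinality $k$ and subsets of cardinality $|A|-k$. By the equivalence above, this bijection restricts to a bijection between $\LL$-free subsets of $A$ of size $k$ and hitting sets of $G$ of size $|A|-k$, as required. I do not anticipate a significant obstacle here; the only point requiring a small amount of care is the collapsing of possibly-repeated entries of a solution tuple into a set of size $\le \ell$, which is precisely why the edges of $G$ have at most (rather than exactly) $\ell$ vertices.
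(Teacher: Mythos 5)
Your proposal is correct and matches the paper's own proof essentially step for step: the same hypergraph on vertex set $A$ with one edge $\{x_1,\ldots,x_\ell\}$ per non-trivial solution, the same polynomial-time enumeration of $\ell$-tuples, and the same complementation bijection $S \mapsto A\setminus S$ between $\LL$-free subsets and hitting sets. Nothing further is needed.
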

\begin{proof}
Suppose without loss of generality that $\LL$ is of the form $a_1x_1 + \cdots + a_{\ell}x_{\ell} = b$, where $a_1,\ldots,a_{\ell},b \in \mathbb{Z}$. 
Let $G$ be the hypergraph with vertex set $A$ and edge set
$$E := \left\lbrace \{x_1,\ldots,x_{\ell}\}: \text{ $(x_1,\dots,x_{\ell})$ is a non-trivial solution to $\LL$} \right\rbrace.$$
Note that $x_1,\ldots,x_{\ell}$ are not necessarily all distinct, so while every edge in $E$ contains \emph{at most} $\ell$ vertices, an edge may contain strictly fewer than $\ell$ vertices.  It is clear that we can construct $G$ in time $\mathcal{O}(\size{A}^{\ell})$.

We claim that the function $\theta:A \rightarrow A$ defined by $\theta(B) = A \setminus B$ is a bijection from $\LL$-free subsets of $A$ to hitting sets of $G$.  It is clear that this function is a bijection (indeed it is self-inverse), and moreover we have that if $|B| = k$ then $|\theta(B)| = |A|-k$; in order to complete the proof it remains only to show that if $B$ is an $\LL$-free subset of $A$ then $\theta(B)$ is a hitting set of $G$, and that if $B$ is a hitting set of $G$ then $\theta^{-1}(B)=\theta(B)$ is an $\LL$-free subset of $A$.

Suppose that $B$ is an $\LL$-free subset of $A$.  Then, by definition of $E$, there is no $e \in E$ such that $e \subseteq A$.  It follows immediately that every edge $e \in E$ contains at least one vertex of $A \setminus B = \theta(B)$, so $\theta(B)$ is a hitting set in $G$.  Conversely, suppose that $B$ is a hitting set in $G$.  Then we know that there is no $e \in E$ such that $e \subseteq A \setminus B$.  It follows from the definition of $E$ that $A \setminus B = \theta(B)$ is $\LL$-free.
\end{proof}

In particular, this result means that, in order to decide if $A$ contains a solution-free subset of cardinality $k$, it suffices to determine whether a hypergraph on $|A|$ vertices contains a hitting set of cardinality $|A|-k$.  We can therefore make use of known algorithms for the following parameterised problem.

\begin{framed}
\noindent \textbf{p}-card-\textsc{Hitting Set}\newline
\textit{Input:} A hypergraph $G=(V,E)$ and $s \in \mathbb{N}$.\\
\textit{Parameter:} $s + d$, where $d = \max_{e \in E} |e|$. \newline
\textit{Question:} Does $G$ contain a hitting set of cardinality $s$?
\end{framed}

This problem is known to belong to $\FPT$ \cite[Theorem 1.14]{flumgrohe}, so we obtain the following result as an immediate corollary.

\begin{thm}\label{thm:dual-param-dec}
Let $\LL$ be any fixed linear equation.  Then $\LL$-\textsc{Free Subset}, parameterised by $|A|-k$, belongs to $\FPT$.
\end{thm}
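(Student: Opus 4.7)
The plan is to observe that this is an essentially immediate corollary of Lemma~\ref{lma:to-HS} combined with the known fpt-algorithm for \textbf{p}-card-\textsc{Hitting Set}. Given an input $(A,k)$ to $\LL$-\textsc{Free Subset}, I would first apply Lemma~\ref{lma:to-HS} to produce, in time polynomial in $\size{A}$, a hypergraph $G$ on $|A|$ vertices whose edges all have size at most $\ell$, such that $A$ contains an $\LL$-free subset of cardinality $k$ if and only if $G$ contains a hitting set of cardinality $s := |A|-k$.

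The key point is that $\ell$ is a fixed constant depending only on $\LL$ (it is the number of variables in $\LL$) and not on the input. Hence the maximum edge size $d$ of $G$ satisfies $d \leq \ell = \mathcal{O}(1)$, so for the instance $(G,s)$ of \textbf{p}-card-\textsc{Hitting Set}, the parameter value $s + d$ is bounded above by a computable function of $|A|-k$ alone. Invoking the fpt-algorithm of \cite[Theorem 1.14]{flumgrohe} on $(G,s)$ therefore decides the original instance $(A,k)$ in time $f(|A|-k) \cdot \size{A}^{\mathcal{O}(1)}$ for some computable function $f$, which is an fpt-algorithm with respect to the parameter $|A|-k$.

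There is no substantive obstacle here: all the work was done in setting up Lemma~\ref{lma:to-HS} (which ensures the reduction is polynomial-time and the edge sizes stay bounded by the constant $\ell$) and in citing the classical fpt result for bounded-edge-size hitting set. The only thing to verify is that the reduction constitutes a valid fpt-reduction in the sense defined in Section~\ref{sec:complexity}, that is, that the new parameter value depends only on the old parameter value, and this is immediate since $d$ is absorbed into the constant $\ell$.
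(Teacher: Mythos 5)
Your proposal is correct and matches the paper's argument exactly: the paper also obtains this theorem as an immediate corollary of Lemma~\ref{lma:to-HS} together with the fpt-algorithm for \textbf{p}-card-\textsc{Hitting Set} from \cite[Theorem 1.14]{flumgrohe}, with the maximum edge size absorbed into the constant $\ell$. Nothing is missing.
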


%%%%%%%%%%%%%%%

\section{$\LL$-free subsets covering a given fraction of elements}\label{sec:6}

We know, by Theorem~\ref{npthm}, that there is unlikely to be a polynomial time algorithm to decide whether a set $A$ has an $\LL$-free subset of size $k$, for arbitrary $k\in \mathbb N$.  It is therefore natural to ask whether we can efficiently solve a restricted version of the problem in which we want to determine whether a finite set $A$ of (non-zero) integers contains an $\LL$-free subset 
that houses some fixed proportion of the elements of $A$. Given any linear equation $\LL$  and
$0<\eps<1$, we define the following problem.
\begin{framed}
\noindent $\eps$-$\LL$-\textsc{Free Subset}\newline
\textit{Input:} A finite set $A \subseteq \mathbb Z \setminus \{0\}$.\newline
\textit{Question:} Does there exist an $\LL$-free subset $A' \subseteq A$ such that $|A'|\geq \eps|A|$?
\end{framed}
In the case when $\LL$ is $x+y=z$ we refer to $\eps$-$\LL$-\textsc{Free Subset} as $\eps$-\textsc{Sum-Free Subset}. 
Note that $\eps$-$\LL$-\textsc{Free Subset} concerns finite sets of non-zero integers $A$; thus, the definition of $\kappa (\LL)$ (given in Section~\ref{sec:arbitrary-sets}) immediately implies that $\eps$-$\LL$-\textsc{Free Subset} is in
$\P$ for all $\eps \leq \kappa (\LL)$, as in this case every instance is a yes-instance.

Further, recall from Section~\ref{sec:complexity} that, given any fixed linear equation $\LL$, we can decide in time polynomial in $\size{A'}$ whether a set $A' \subseteq \mathbb{Z}$ is $\LL$-free, so $\epsilon$-$\LL$-\textsc{Free Subset} clearly belongs to $\NP$.

We will show in Section \ref{62} that, for certain choices of $\LL$ and $\epsilon$, the $\epsilon$-$\LL$-\textsc{Free Subset} problem is no easier than $\LL$-\textsc{Free Subset}.  
For this, we will actually restrict our attention to the case when we have input set $A\subseteq \mathbb N$.
In this case, we need to be able to add elements to  $A $ without creating any additional solutions; we prove results about this in Section \ref{sec:extend}.

\subsection{Extending sets without creating additional solutions} \label{sec:extend}
In Section~\ref{62}, and also later in Section~\ref{sec:counting}, we will make use of the following lemma, which allows us to extend sets without creating additional solutions to an equation.

\begin{lemma}\label{extend1}
Suppose $\LL$ is a linear equation $ax+by=cz$ where $a,b,c \in \mathbb N$ are fixed and $a+b \not = c$. 
Suppose $A\subseteq \mathbb N$ is a finite set and $t \in \mathbb N$ so that $t>|A|$. 
Then there is a set $B\subseteq \mathbb N$ such that:
\begin{itemize}
\item[(i)] $|B|=t$;
\item[(ii)] $A\subseteq B$;
\item[(iii)] the only solutions to $\LL$ in $B$ lie in $A$;
\item[(iv)] $\max (B)= \mathcal{O}\left(t\left(\max(A)\right)^2\right)$.
\end{itemize}
Moreover, $B$ can be computed in time polynomial in $\size{A}$ and $t$.
\end{lemma}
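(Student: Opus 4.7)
\smallskip

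\noindent\textbf{Proof plan.}
The plan is to build $B$ by adjoining to $A$ an arithmetic progression of $m := t - |A|$ new integers, $n_i := M + iD$ for $0 \leq i \leq m-1$, with common difference $D$ taken of order $N := \max(A)$ and starting value $M$ of order $tN$. Once it is verified that these new elements create no additional solutions to $\LL$, one immediately obtains $\max(B) = M + (m-1)D = \mathcal{O}(tN)$, comfortably within the claimed bound $\mathcal{O}(tN^2)$.

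To verify (iii), I would split any putative new solution $(x,y,z) \in B^3$ according to how many of $x, y, z$ lie in $\{n_0, \dots, n_{m-1}\}$. A simple size comparison (each element of $A$ contributes at most $N$ on the relevant side while each new element contributes at least $M$) rules out any solution with at most one new component, and also any solution $a n_i + b n_j = cz$ with $z \in A$ and both $n_i, n_j$ new, provided $M$ exceeds a suitable constant multiple of $N$.

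The substantive cases are the ``mixed'' ones $an_i + by = cn_k$ with $y \in A$, its symmetric partner $ax + bn_j = cn_k$ with $x \in A$, and the ``all-new'' case $an_i + bn_j = cn_k$. Substituting $n_i = M + iD$ (and similarly for $j$, $k$) into each produces a linear equation in $M$ with leading coefficient $a - c$, $b - c$, or $a+b-c$ respectively. When this leading coefficient is non-zero the resulting relation forbids at most one value of $M$ per choice of (at most three) indices from $\{0, \dots, m-1\}$ and (at most one) element of $A$, yielding $\mathcal{O}(m^3 + m^2 |A|) = \mathcal{O}(t^3)$ forbidden values in total, each of magnitude $\mathcal{O}(tD) = \mathcal{O}(tN)$. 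Any $M$ strictly exceeding the maximum magnitude of a forbidden value is therefore automatically valid.

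The delicate point is the degenerate subcases $a = c$ or $b = c$, in which the leading coefficient of the relevant mixed case vanishes and the equation in $M$ collapses to a constraint of the form $(k-i)D = by/a$ (respectively $(k-j)D = ax/b$) independent of $M$. I would rule this out by choosing $D > b\max(A)/a$ (respectively $a\max(A)/b$), which forces $k = i$ (respectively $k = j$); this in turn forces $y = 0$ (respectively $x = 0$), contradicting $A \subseteq \mathbb{N}$. Setting $D := (a+b+c)N + 1$ simultaneously handles both degeneracies and the earlier threshold conditions. I expect organising this case analysis, particularly the degenerate subcases, to be the main obstacle to a clean write-up. Finally, a valid $M$ can be located by enumeration in time polynomial in $\size{A}$ and $t$, using the naive $\LL$-free test from Section~\ref{sec:complexity} to check each candidate.
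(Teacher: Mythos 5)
Your proposal is correct, but it takes a genuinely different route from the paper. The paper builds the new elements by taking an $\LL$-free subinterval of $[N]$ (supplied by Observation~\ref{obs1}) and multiplying it by a prime $p\in(abc\max(A),2abc\max(A))$ obtained from Bertrand's postulate: homogeneity makes the scaled set $\LL$-free for free, a single size comparison kills solutions with two elements of $A$, and a single divisibility-by-$p$ argument kills solutions with two new elements and one element of $A$. You instead adjoin an arithmetic progression $M+iD$ and treat every mixed or all-new solution as a linear constraint on $M$ with leading coefficient $a-c$, $b-c$ or $a+b-c$, choosing $M$ above the magnitude of all $\mathcal{O}(t^3)$ forbidden values and choosing $D>(a+b+c)\max(A)$ to dispose of the degenerate subcases $a=c$ and $b=c$ (which you correctly identify as the delicate point, and which your divisibility-free argument must handle by hand where the paper's prime does the work automatically). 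Your case analysis is exhaustive and the thresholds all check out; as a bonus your construction gives $\max(B)=\mathcal{O}(t\max(A))$, improving the stated $\mathcal{O}(t(\max(A))^2)$, and it avoids any appeal to Chebyshev's theorem or primality testing in the algorithmic claim --- indeed, since a single explicit $M$ above the computed bound works, your closing enumeration step is unnecessary. The price is a longer case analysis; the paper's prime-scaling trick buys a shorter verification at the cost of heavier (though standard) number-theoretic input.
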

\proof
Write $m:=\max (A)$ and set $\tau:=\min\{\frac{c}{a+b}, \frac{a+b}{c}\}$. Define $N\in \mathbb N$ to be the smallest natural number so that $\lfloor \tau N   \rfloor \geq 2(a+b+c)m$ and $N-\lfloor \tau N \rfloor \geq t$.
This choice of $N$ means that  $\lfloor \tau (N-1)   \rfloor < 2(a+b+c)m$ or $(N-1)-\lfloor \tau (N-1) \rfloor< t$.
Thus,
\begin{align}\label{ll}
N\leq \max \left\{\frac{t}{1-\tau}, \frac{2(a+b+c)m}{\tau}\right \}+1.
\end{align}

By Observation~\ref{obs1} and the choice of $N$, $[N]$ contains an $\LL$-free subset $I'$ so that
$$I'\subseteq [\lfloor \tau N   \rfloor+1, N]$$
and $|I'|=t-|A|$.

Chebyshev's theorem implies that there is a prime $p$ so that $abcm<p<2abcm$. Set $I'':=pI'$ and let $B:=A\cup I''$.  Note that we can clearly determine $N$ and hence construct $I'$ in time bounded by a polynomial function of $\size{A}$ and $t$.  We can determine an appropriate value for $p$ (and then construct $I''$) by exhaustively searching the specified interval and testing for primality in polynomial time (using the AKS test \cite{aks}).  This set immediately satisfies (ii) and, since $p>m$, $A$ and $I''$ are disjoint so (i) is satisfied.  Moreover, it follows from \eqref{ll} and the choice of $p$ that $\max(B) \leq \max\left\lbrace \frac{2abctm}{1 - \tau}, \frac{4m^2abc(a+b+c)}{\tau}\right\rbrace + 2abcm = \mathcal{O}\left(t \left(\max(A)\right)^2\right)$, so (iv) is satisfied. 

To see that (iii) is satisfied, we first observe that there are no solutions to $\LL$ in $I''$. Since $\min(I'') > 2(a+b+c)m$ it is easy to check that there are no solutions to $\LL$ in $B$ which consist of two elements from $A$ and one element from $I''$.
Suppose there is a solution to $\LL$ in $B$ which consists of two elements $z_1,z_2$ from $I''$ and one element $z_3$ from $A$. 
Consider the case when $az_1+bz_2=cz_3$ (the other cases follow identically).
Since every element of $I''$ is divisible by $p$ we have that $p$ divides $cz_3$. So as $c<p$ this implies $p$ must divide $z_3$. However, no element of $A$ is divisible by $p$ since $\max (A)=m<p$, a contradiction.  Hence $B$ satisfies condition (iii).  This completes the proof.
\endproof

We can also prove an analogous result for equations $\LL$ of the form $ax+by=cz$ where $a,b,c\in \mathbb N$ are fixed and $a+b=c$.  To do so, we will need the following fact.

\begin{fact}\label{easy}
Suppose $\LL$ is a linear equation $ax+by=cz$ where $a,b,c \in \mathbb N$ are fixed; $a+b  = c$; and $a\leq b$. 
Given any $x_1<x_2<x_3$ that form a solution $(x,y,z)$ to $\LL$ in $\mathbb N$, we have that $x_2$ plays the role of $z$ and $cx_2>ax_3$.
\end{fact}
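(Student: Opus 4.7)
The plan is to exploit that the hypothesis $a+b=c$ turns $ax+by=cz$ into the statement ``$z$ is a convex combination of $x$ and $y$ with strictly positive weights.'' First I would rewrite the equation as $z = (a/c)\,x + (b/c)\,y$ and note that, since $a,b\in\mathbb{N}$ are both strictly positive and $a/c+b/c=1$, this is a strict convex combination: whenever $x\neq y$, the value $z$ lies strictly between $\min(x,y)$ and $\max(x,y)$. Because $x_1<x_2<x_3$ are pairwise distinct, the roles of $x$ and $y$ in the solution must be filled by two different elements of $\{x_1,x_2,x_3\}$, so in particular $x\neq y$, and consequently $z$ is neither the smallest nor the largest of the three. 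Hence $z=x_2$, which is the first assertion.

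For the second assertion, I would substitute $z=x_2$ into the equation to obtain $cx_2 = ax + by$, and split on which of $x_1,x_3$ plays the role of $x$. If $x=x_1$ and $y=x_3$, then $cx_2 = ax_1 + bx_3$, so $cx_2 - ax_3 = ax_1 + (b-a)x_3 > 0$, where I use $a,x_1 \geq 1$ together with the hypothesis $a \leq b$. If instead $x=x_3$ and $y=x_1$, then $cx_2 = ax_3 + by > ax_3$ immediately from $b,y\geq 1$. Either way the strict inequality $cx_2 > ax_3$ holds.

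This is essentially a one-line observation, so I anticipate no real obstacle; the only subtlety worth flagging is that the hypothesis $a\leq b$ is precisely what is needed to handle the sub-case in which $x_3$ plays the role of $y$, while the other sub-case uses nothing beyond positivity of the coefficients.
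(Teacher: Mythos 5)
Your proof is correct and follows essentially the same route as the paper's: the strict-convex-combination observation is just a repackaging of the paper's inequalities $cx_3>\max\{ax_1+bx_2,\,ax_2+bx_1\}$ and $ax+by>cx_1$, and your explicit case split on whether $x_1$ or $x_3$ plays the role of $x$ (with $a\leq b$ needed only when $x_3$ plays the role of $y$) is exactly the step the paper compresses into its final line.
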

\proof
 Consider any $x_1<x_2<x_3$ that form a solution $(x,y,z)$ to $\LL$ in $\mathbb N$. 
Note that since $a+b=c$, we have $cx_3>\max\{(ax_1+bx_2), (ax_2+bx_1)\}$. Thus $x_3$ cannot play the role of $z$.
Further, $x_2$ must play the role of $z$. Indeed, otherwise $x_1$  plays the role of $z$ and then we have $ax+by>cz$, a contradiction.
Altogether this implies that $cx_2>ax_3$.
\endproof

\begin{lemma}\label{extend2}
Suppose $\LL$ is a linear equation $ax+by=cz$ where $a,b,c \in \mathbb N$ are fixed and $a+b  = c$. 
Suppose $A\subseteq \mathbb N$ is a finite set and $t \in \mathbb N$. Then there is a set $B\subseteq \mathbb N$ such that:
\begin{itemize}
\item[(i)] $|B|=|A|+t$;
\item[(ii)] $A\subseteq B$;
\item[(iii)] the only non-trivial solutions to $\LL$ in $B$ lie in $A$;
\item[(iv)] $\max (B)=2\left(\max(A)\right)c^t$.
\end{itemize}
Moreover, $B$ can be computed in time polynomial in $t$ and $\size{A}$.
\end{lemma}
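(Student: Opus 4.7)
My plan is to take $B = A \cup \{y_1,\dots,y_t\}$ where the $y_i$'s form a geometric sequence with common ratio $c$ and all exceed $\max(A)$; concretely, with $m := \max(A)$ (taking $m \geq 1$, so the case $A = \emptyset$ is trivial), set $y_i := 2mc^i$ for $i \in [t]$. Since $c = a+b \geq 2$, the $y_i$'s are strictly increasing and strictly larger than every element of $A$, so properties (i), (ii), (iv) hold immediately: $|B| = |A| + t$, $A \subseteq B$, and $\max(B) = y_t = 2mc^t$. The construction clearly runs in time polynomial in $t$ and $\size{A}$, as each $y_i$ has bit-length $O(\size{A} + t)$.

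The bulk of the work is (iii). First I would establish that every non-trivial solution $(x,y,z)$ to $\LL$ in $\mathbb{N}$ has three pairwise distinct coordinates. Because $a+b = c$, the only trivial solutions to $\LL$ are of the form $(x,x,x)$. A short case analysis then shows that equality of any two coordinates forces the third to agree: for example, if $x = z$ then $ax + by = cx$ yields $by = (c-a)x = bx$, so $y = x$; the cases $x = y$ and $y = z$ are analogous. Hence any putative non-trivial solution in $B$ has three distinct values $x_1 < x_2 < x_3$, and (assuming WLOG $a \leq b$, using symmetry in $x$ and $y$) Fact~\ref{easy} gives $cx_2 > ax_3$, i.e.\
\[
\frac{x_3}{x_2} \;<\; \frac{c}{a} \;\leq\; c.
\]

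Now suppose for contradiction that some non-trivial solution in $B$ uses at least one $y_i$. Since every $y_i > m \geq \max(A)$, the largest element $x_3$ must be some $y_{j^*}$. The displayed bound then forces $x_2 > (a/c)\cdot 2mc^{j^*} \geq 2m > m$, so $x_2$ is also one of the $y_i$'s, say $y_{j'}$ with $j' < j^*$. But then $x_3/x_2 = c^{j^*-j'} \geq c$, which contradicts $x_3/x_2 < c/a \leq c$. Therefore no non-trivial solution in $B$ involves any $y_i$, so every such solution lies entirely in $A$, as required.

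The only mildly delicate step is the case analysis ruling out non-trivial solutions with two equal coordinates; after that, the geometric-growth argument is essentially forced, and the ratio $c$ between consecutive $y_i$'s is exactly tight against the bound from Fact~\ref{easy}, which is why we cannot do better than $\max(B) = 2mc^t$ with this approach.
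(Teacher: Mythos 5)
Your proposal is correct and follows essentially the same route as the paper: the identical construction $B = A \cup \{2\max(A)\,c^i : i \in [t]\}$, the same reduction of non-triviality to pairwise distinctness (using $a+b=c$), and the same appeal to Fact~\ref{easy} to force $cx_2 > ax_3$ and derive a contradiction with the geometric growth. The paper merely organises the final contradiction as three cases according to how many of the solution's elements lie in the new set, whereas you unify them by noting $x_3$ and then $x_2$ must both be new elements; this is a cosmetic difference only.
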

\proof
Without loss of generality assume that $a\leq b$.
Note that  $(x,y,z)$ is a non-trivial solution to $\LL$ if and only if $(x,y,z)$ is a solution to $\LL$ with $x,y,z$ distinct.

Set $m := \max(A)$, and define $A':=\{ c^i\cdot 2m \ : \ i \in [t] \}$; we can clearly construct $B$ in time polynomial in $t$ and $\size{A}$. We claim that $B:=A\cup A'$ is our desired set. Certainly (i), (ii) and (iv)  follow immediately.

We now prove (iii). Suppose $x_1<x_2<x_3$  form a solution $(x,y,z)$ to $\LL$ in $A'$. Then by Fact~\ref{easy} we must have that
$cx_2>ax_3$. However, by definition of $A'$, $ax_3\geq x_3\geq c  x_2$, a contradiction. So $A'$ does not contain any non-trivial solutions to $\LL$.
The same argument shows that there are no non-trivial solutions to $\LL$ in $B$ which contain two elements from $A'$ and one element from $A$.
Finally suppose  $x_1<x_2<x_3$  form a solution $(x,y,z)$ to $\LL$ in $B$ where $x_1,x_2 \in A$ and $x_3 \in A'$. As before we must have that
$cx_2>ax_3$. However, $ax_3>acm\geq cx_2$ by definition of $A'$, a contradiction. This proves (iii).
\endproof

Lemma~\ref{extend2} will be applied in the next subsection to prove that for any equation $\LL$ as in its statement, $\eps$-$\LL$-\textsc{Free Subset} is $\NP$-complete for any $0<\eps <1$.

\subsection{Hardness of $\eps$-$\LL$-\textsc{Free Subset}}\label{62}

In this section we show that, in two specific cases, $\eps$-$\LL$-\textsc{Free Subset} is $\NP$-complete.  We begin with the case of sum-free subsets.  Note that if $\eps \leq \frac{1}{3}$ then the problem is trivially in $\P$ as the answer is always ``yes''; also if $\eps = 1$ then it suffices to check whether the input set is sum-free (which can be done in polynomial time).  We now demonstrate that the problem is $\NP$-complete for all other values of $\eps$.  Recall that, given a set $X \subseteq \mathbb{N}$ and $y \in \mathbb{N}$, we write $yX$ as shorthand for $\{yx: x \in X\}$.

\begin{thm}\label{thmc}
Given any rational $1/3<\eps<1$, $\eps$-\textsc{Sum-Free Subset} is $\NP$-complete.
\end{thm}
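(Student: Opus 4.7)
The problem is in $\NP$ since one can verify the size and sum-freeness of any candidate subset in polynomial time, so my plan is to show $\NP$-hardness by reducing from \textsc{Sum-Free Subset}, which is $\NP$-complete by Theorem~\ref{npthm}. Writing $\eps = p/q$ in lowest terms and given an instance $(A,k)$, I will construct in polynomial time a set $A^* \subseteq \mathbb{N}$ such that $A$ has a sum-free subset of size $\geq k$ if and only if $A^*$ has a sum-free subset of size $\geq \eps|A^*|$ (trivial cases such as $k > |A|$ are handled separately).

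The template will be $A^* := A' \cup B$, where $A'$ consists of $\lambda$ disjoint ``scaled'' copies of $A$---obtained by multiplying through by large distinct primes so that no non-trivial Schur triple crosses between different copies, essentially as in the proof of Lemma~\ref{extend1}---and $B$ is a padding chosen so as not to introduce further Schur triples spanning different components. Writing $M$ for the size of the largest sum-free subset of $A$ and $\rho_B := M(B)/|B|$, this set-up gives $M(A^*) = \lambda M + M(B)$ and $|A^*| = \lambda|A| + |B|$, so the $\eps$-condition on $A^*$ rearranges to
\[ \lambda M \ \geq\ \eps\lambda|A| \,+\, |B|(\eps - \rho_B). \]
For this to be equivalent to the desired $M \geq k$, the sign of $\eps - \rho_B$ must match that of $k - \eps|A|$, which forces a split into two cases.

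\textbf{Case 1 ($k \leq \eps|A|$).} I will use sum-free padding (so $\rho_B = 1$): take $\lambda := q-p$ and apply Lemma~\ref{extend1} to $A'$ with target size $t := q(|A|-k)$, obtaining $A^*$ of size $t$ whose only Schur triples lie inside $A'$. A direct calculation then verifies $M(A^*) \geq \eps|A^*|$ iff $M \geq k$; the side condition $t > \lambda|A|$ needed by Lemma~\ref{extend1} is exactly the hypothesis $\eps|A| > k$ (the boundary $\eps|A| = k$ is trivial).

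\textbf{Case 2 ($k > \eps|A|$).} Now $\rho_B < \eps$ is required, so $B$ must itself have low sum-free ratio. I will fix a precomputed set $B_0 \subseteq \mathbb{N}$ with $\rho_0 := M(B_0)/|B_0| < \eps$; the existence of such a $B_0$ follows from the discussion of $\kappa(\textup{sum-free}) = 1/3$ in Section~\ref{sec:arbitrary-sets}, which invokes Eberhard--Green--Manners~\cite{egm} to show that this supremum is attained, and since $|B_0|$ depends only on $\eps$ it can be located by brute-force search in constant time. Take $B$ to be $r$ prime-scaled disjoint copies of $B_0$ arranged to avoid all cross Schur triples (both between copies and with $A'$); then $\rho_B = \rho_0$. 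Choosing $\lambda$ and $r$ to clear denominators in the rational equation $\eps\lambda|A| + r|B_0|(\eps - \rho_0) = \lambda k$ yields positive integer solutions (for example, $\lambda$ proportional to the denominator of $\eps - \rho_0$ times $|B_0|$), and the required equivalence follows. The hard part is Case~2: the obvious explicit constructions of sum-free-sparse sets (intervals, matching-based graphs via Lemma~\ref{useful1}, etc.) stall at ratio $\geq 1/2$, so for $\eps \leq 1/2$ the non-trivial density result of~\cite{egm} genuinely is needed; everything else reduces to integer bookkeeping to ensure polynomial size.
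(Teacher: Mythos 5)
Your proposal is correct and follows essentially the same route as the paper: a two-case reduction from \textsc{Sum-Free Subset} split on the sign of $k-\eps|A|$, using the sum-free padding of Lemma~\ref{extend1} when $k\le\eps|A|$, and dilated disjoint copies of a fixed constant-size Eberhard--Green--Manners-type set of sum-free ratio below $\eps$ when $k>\eps|A|$. The only differences are cosmetic: you take several scaled copies of $A$ to avoid ceiling functions and balance an exact equation in Case~2, whereas the paper pads with copies of $S$ to bring $k^*$ below $\eps|A^*|$ and then reuses Case~1.
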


%if $c<\kappa(\LL)$ (or $c\leq \kappa(\LL)$ if this supremum is attained) then $c$-$\LL$-\textsc{Free Subset} is trivially in P.
%In particular, $c$-\textsc{Sum-Free Subset} is NP-complete if $c>1/3$ and in P if $c\leq 1/3$.
\proof
Recall that $\eps$-\textsc{Sum-Free Subset} belongs to $\NP$.  
To show that the problem is $\NP$-hard, we 
 describe a reduction from \textsc{Sum-Free Subset} (restricted to inputs $A\subseteq \mathbb N$), shown to be $\NP$-hard in Theorem~\ref{npthm2}.

Suppose that $(A,k)$ is an instance of \textsc{Sum-Free Subset} where $A\subseteq \mathbb N$. 
We will define a set $B \subseteq \mathbb N$ such that $B$  has a sum-free subset of size at least $\eps|B|$ if and only if $A$ has a sum-free subset of size $k$.
The construction of $B$ depends on the value $k$.
%Furthermore, note that  $\LL$-\textsc{Free Subset} is NP-complete conditioned on the assumption the input $(A,k)$ is such that $A$ has size bigger than some arbitrary fixed constant. 
%So in what follows we may assume the set $A$ is sufficiently large. 

First suppose that $k\leq \eps|A|$.  Set
$$d := \left\lceil \frac{\eps |A| - k}{1 - \eps} \right \rceil,$$
so that $d$ is the least positive integer such that $\eps (|A| + d) \leq k+d$ and hence $\lceil \eps(|A| + d)\rceil = k + d$.  Note that $d = \mathcal{O}(|A|)$.  By Lemma~\ref{extend1} we can construct, in time polynomial in $\size{A}$ and $d$, a set $B \subseteq \mathbb{N}$ of size $|A|+d$ such that $B$ has a sum-free subset of size $k+d$ if and only if $A$ has a sum-free subset of size $k$.  By the choice of $d$, we know that $B$ has a sum-free subset of size $k+d$ if and only if $B$ has a sum-free subset of size at least $\eps(|A| + d)$; so $B$ is a yes-instance for $\eps$-\textsc{Sum-Free Subset} if and only if $(A,k)$ is a yes-instance for \textsc{Sum-Free Subset}.

Now suppose $k >\eps|A|$.
The result of Eberhard, Green and Manners~\cite{egm} implies that there is a set $S\subseteq \mathbb N$ such that the largest sum-free subset of $S$ has size precisely $\eps'|S|$ where $1/3<\eps'<\eps$. 
In particular, through an exhaustive search, one can construct such a set $S$.  Crucially, $S$ is independent of our input $(A,k)$ (so $\size{S}$, $|S|$ and $\max (S)$  are all fixed constants).

Set 
$$r := \left\lceil \frac{k - \eps|A|}{(\eps - \eps')|S|} \right\rceil,$$
and note that $r = \mathcal{O}(|A|)$.  Set $m := \max(A)$, $m' := \max(S)$, and define $d_i := 3^im(m')^{i-1}$ for each $1 \leq i \leq r$.  Note that $\log d_r = \mathcal{O}(r + \log m)$, so for each $1 \leq i \leq r$, we can represent the set $d_iS$ in space $\mathcal{O}(\size{A})$.  Now define
$$A^*:= A \cup \bigcup _{1\leq i \leq r} d_i S.$$
The choice of the $d_i$ ensures the only solutions to $x+y=z$ in $A^*$ are such that $x,y,z\in A$ or $x,y,z\in d_i S$ for some $i \in [r]$. The largest sum-free set in  $d_iS$ is of size $\eps'|S|=\eps'|d_iS|$.
Define $k^*:=k+r\eps'|S|$, and observe that $A$ has a sum-free subset of size $k$ if and only if $A^*$ has a sum-free subset of size $k^*$.
By definition of $r$, $r \eps' |S| \leq r \eps |S| + \eps |A| - k$, so we see that $k^*\leq \eps(|A|+r|S|)=\eps|A^*|$. Now we can argue precisely as in the first case: from $A^*$ one can construct a set $B$ in time polynomial in $\size{A}$ so that $A^*$ has a sum-free subset of size $k^*$ if and only if $B$ has a sum-free subset of size at least $\eps|B|$.
In particular, $B$ will be a yes-instance for $\eps$-\textsc{Sum-Free Subset} if and only if $(A,k)$ is a yes-instance for \textsc{Sum-Free Subset}, as required.
\endproof

We are also able to prove an $\NP$-completeness result in the only other cases of three-variable equations $\LL$ where $\kappa(\LL)$ is known, using a slight variation on the method of Theorem \ref{thmc}.  
In particular, the following result covers the case of progression-free sets.
Recall that if $\LL$ is translation-invariant, Ruzsa~\cite{ruzsa} observed that the largest $\LL$-free subset of $[n]$ has size $o(n)$ (and so $\kappa (\LL)=0$).

\begin{thm}\label{thmcti}
Consider any rational $0 <\eps<1$ and let $\LL$ denote the equation $ax+by=cz$ where $a,b,c\in \mathbb N$ and $a+b=c$. Then $\eps$-$\LL$-\textsc{Free Subset} is $\NP$-complete.
\end{thm}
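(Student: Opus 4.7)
The proof adapts the strategy of Theorem~\ref{thmc}, with Lemma~\ref{extend2} playing the role of Lemma~\ref{extend1} and Ruzsa's theorem~\cite{ruzsa} replacing the Eberhard--Green--Manners result used for sum-free sets. Membership in $\NP$ is immediate. To prove $\NP$-hardness, I would reduce from $\LL$-\textsc{Free Subset} restricted to inputs $A\subseteq\mathbb N$ (shown $\NP$-hard in Theorem~\ref{npthm2}). Given $(A,k)$, the goal is to produce $B\subseteq\mathbb N$ in polynomial time such that $B$ has an $\LL$-free subset of size at least $\eps|B|$ if and only if $A$ has an $\LL$-free subset of size $k$. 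Write $L(A)$ for the size of the largest $\LL$-free subset of $A$. The construction splits according to whether $k\leq\eps|A|$ or $k>\eps|A|$.

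If $k\leq\eps|A|$, I take $t:=\lceil(\eps|A|-k)/(1-\eps)\rceil=\mathcal{O}(|A|)$, chosen so that $\lceil\eps(|A|+t)\rceil=k+t$, and apply Lemma~\ref{extend2} to produce $B\supseteq A$ with $|B|=|A|+t$ whose only non-trivial $\LL$-solutions lie in $A$. Lemma~\ref{extend2}(iv) then yields $\log\max(B)=\mathcal{O}(t+\log\max(A))=\mathcal{O}(\size{A})$, so the reduction runs in polynomial time. Since adjoining any subset of the $t$ new elements creates no non-trivial solution, the largest $\LL$-free subset of $B$ has size exactly $L(A)+t$, and by the choice of $t$ this is at least $\eps|B|$ if and only if $L(A)\geq k$.

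If instead $k>\eps|A|$, I first reduce to the previous case by padding $A$ with structured blocks of $\LL$-free density strictly less than $\eps$. Fix a rational $0<\eps'<\eps$; since $\LL$ is translation-invariant, Ruzsa's theorem~\cite{ruzsa} supplies a constant $n$ (depending only on $\LL$ and $\eps'$) such that the largest $\LL$-free subset of $S:=[n]$ has size $s<\eps'n$. Put $r:=\lceil(k-\eps|A|)/((\eps-\eps')n)\rceil=\mathcal{O}(|A|)$ and form
\[
A^*:=A\cup\bigcup_{i=1}^{r}d_iS,
\]
where multipliers $d_1<\cdots<d_r$ are chosen (see below) so that no non-trivial solution to $\LL$ in $A^*$ uses elements from two or more of the disjoint pieces $A,d_1S,\ldots,d_rS$. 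Since $\LL$ is homogeneous, each $d_iS$ has the same $\LL$-free structure as $S$, so $L(A^*)=L(A)+rs$; the choice of $r$ forces $k^*:=k+rs\leq\eps|A^*|$, and $A$ has an $\LL$-free subset of size $k$ if and only if $A^*$ has one of size $k^*$. Applying the first-case construction to $(A^*,k^*)$ then yields the required $B$.

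The main technical point is selecting the $d_i$ to preclude cross-block solutions while keeping bit-lengths polynomial in $\size{A}$. Mirroring the prime trick inside the proof of Lemma~\ref{extend1}, I would take $d_i$ to be the $i$-th prime strictly exceeding $abc\cdot\max(A)\cdot n$; Chebyshev's theorem and the AKS test~\cite{aks} produce all $r$ such primes in polynomial time, and any non-trivial cross-block solution would force an element $p_jt$ (with $t\in S$, so $t<p_i$) to be divisible by a distinct prime $p_i$, which is impossible. A more elementary alternative, available because $a+b=c$, is to set $d_i:=(cn)^i\max(A)$ and invoke Fact~\ref{easy}: a non-trivial solution $x_1<x_2<x_3$ must satisfy $cx_2>ax_3$, which fails whenever $x_2$ and $x_3$ lie in distinct blocks (since $\min(d_{i+1}S)/\max(d_iS)\geq c\geq c/a$), and when $x_2,x_3\in d_iS$ the remaining element $x_1$ is forced to be a positive integer multiple of $d_i$, so it cannot lie in any lower block either. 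Either variant yields $\log\max(A^*)=\mathcal{O}(|A|)$, so the entire reduction is polynomial-time.
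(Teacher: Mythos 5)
Your proposal follows the paper's proof almost exactly: the same two-case split on $k\leq\eps|A|$ versus $k>\eps|A|$, Lemma~\ref{extend2} for the first case, a constant-size set $S$ of low $\LL$-free density obtained from Ruzsa's theorem for the second, and geometrically spaced copies $d_iS$ glued on via Fact~\ref{easy}; your second choice of multipliers $d_i=(cn)^i\max(A)$ is essentially the paper's $d_i=3^ic^i m(m')^{i-1}$, and with that choice the argument is correct.

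However, your \emph{first} choice of multipliers (consecutive primes just above $abc\cdot\max(A)\cdot n$) does not close all cases. The divisibility argument you sketch handles a cross-block solution in which the two larger elements $x_2,x_3$ lie in the same block $p_iS$ (then $ax_1$ or $bx_1$ must be divisible by $p_i$, which is impossible). But Fact~\ref{easy} leaves a second case: $x_3\in p_iS$ while $x_2$ lies in a strictly lower block. There the equation reads $cx_2=ax_1+bx_3$ (or with $a,b$ swapped), and reducing mod $p_i$ gives no contradiction; one instead needs the size obstruction $cx_2\leq ax_3$, which requires $p_i\geq (c/a)\,n\,p_{i-1}$ --- and consecutive primes are far too close together for this. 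So that variant needs either a growth condition on the primes or a further argument; the geometric variant avoids the problem because each block exceeds the previous by a factor $cn$. A second, minor, imprecision: when $x_2,x_3\in d_iS$, it is $ax_1$ or $bx_1$ (not $x_1$ itself) that is forced to be a multiple of $d_i$; the conclusion $x_1\geq d_i/b>\max(d_{i-1}S)$ still follows, which is exactly how the paper argues.
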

\proof
Fix $\eps$ and $\LL$ as in the statement of the theorem; we will assume without loss of generality that $b \geq a$. Recall that $\eps$-$\LL$-\textsc{Free Subset} is in $\NP$.  To show $\NP$-hardness, we once again give a reduction from $\LL$-\textsc{Free Subset}
(restricted to inputs $A\subseteq \mathbb N$), shown to be $\NP$-hard in Theorem~\ref{npthm2}.

Suppose that $(A,k)$ is an instance of $\LL$-\textsc{Free Subset}, where $A \subseteq \mathbb{N}$.
We will define a set $B \subseteq \mathbb N$ such that $B$  has an $\LL$-free subset of size at least $\eps|B|$ if and only if $A$ has an $\LL$-free subset of size $k$.
The construction of $B$ depends on the value $k$.
%Furthermore, note that  $\LL$-\textsc{Free Subset} is NP-complete conditioned on the assumption the input $(A,k)$ is such that $A$ has size bigger than some arbitrary fixed constant. 
%So in what follows we may assume the set $A$ is sufficiently large. 

First suppose that $k\leq \eps|A|$. 
As in the proof of Theorem~\ref{thmc}, we define $d$ so that $\lceil \eps(|A|+d)\rceil=k+d$.  By Lemma \ref{extend2} we can construct, in time bounded by a polynomial function of $\size{A}$, a set $B \subseteq \mathbb N$ of size $|A|+d$ such that (by conditions (i)--(iii) of the lemma) $B$ has an $\LL$-free subset of size $k+d$ if and only if $A$ has an $\LL$-free subset of size $k$.  By our choice of $d$, this means that $B$ is a yes-instance to $\eps$-$\LL$-\textsc{Free Subset} if and only if $(A,k)$ is a yes-instance to $\LL$-\textsc{Free Subset}.

Now suppose $k >\eps|A|$. Since the largest $\LL$-free subset of $[n]$ has size $o(n)$, 
we can find by exhaustive search a set $S\subseteq \mathbb N$ such that the largest $\LL$-free subset of $S$ has size $\eps'|S|$ for some $0<\eps'<\eps$. 
Crucially, $S$ is independent of our input $(A,k)$ (so $|S|$, $\max (S)$  and $\size{S}$ are all fixed constants).

As in the proof of Theorem \ref{thmc}, we set 
$$r := \left \lceil \frac{k - \eps |A|}{(\eps - \eps')|S|} \right \rceil.$$
Set $m := \max (A)$, $m' := \max(S)$, and define $d_i := 3^ic^im(m')^{i-1}$ for each $1 \leq i \leq r$.  Note that, for each $d_i$, we can represent the set $d_iS$ in space $\mathcal{O}(\size{A})$.  Observe also that $\max(A) < \min (d_1S)$ and, for $1 \leq i \leq r-1$, $\max(d_iS) < \min(d_{i+1}S)$.

Now set 
$$A^*:= A \cup \bigcup _{1\leq i \leq r} d_i S.$$
We claim that the only solutions to $ax+by=cz$ in $A^*$ are such that $x,y,z\in A$ or $x,y,z\in d_i S$ for some $i \in [r]$.
To see this, first suppose there are $x_1<x_2<x_3$ in $A^*$ so that $x_3\in d_i S$ for some $i \in [r]$,  $x_2 \not \in d_i S$, and $x_1,x_2,x_3$ form a solution to $\LL$. Then by Fact~\ref{easy} we have that
$cx_2 >ax_3$. However, we also know that, if $i > 1$, $ax_3\geq x_3 \geq d_i = 3cm'd_{i-1} >cm'd_{i-1} \geq cx_2$; if $i=1$ then $ax_3\geq 3acm>cm \geq cx_2$.  In either case this gives a contradiction.
Next suppose there are $x_1<x_2<x_3$ in $A^*$ so that $x_2,x_3\in d_i S$ for some $i \in [r]$, $x_1 \not \in d_i S$, and $x_1,x_2,x_3$ form a solution to $\LL$.
 Suppose $i>1$. Then $d_i$ divides $x_2$ and $x_3$ and so, by Fact~\ref{easy}, $d_i$ divides $ax_1$ or $bx_1$. In particular, we have that $bx_1\geq d_i$.
However, since  $x_1 \not \in d_i S$, we have that $x_1\leq d_{i-1}m'$ and so $bx_1\leq cm'd_{i-1} < 3cm'd_{i-1} = d_i$, a contradiction. The case $i=1$ yields an analogous contradiction.
Altogether this indeed proves the only solutions $(x,y,z)$ to $\LL$ in $A^*$ are such that $x,y,z\in A$ or $x,y,z\in d_i S$ for some $i \in [r]$.

Now we can continue as in the proof of Theorem~\ref{thmc}.  Note that the largest $\LL$-free set in  $d_iS$ is of size $\eps'|S|=\eps'|d_iS|$, and set $k^*:=k+r\eps'|S|$, so that $A$ has an $\LL$-free subset of size $k$ if and only if $A^*$ has an $\LL$-free subset of size $k^*$.
By definition of $r$, we have $k^*\leq \eps(|A|+r|S|)=\eps|A^*|$, so we can now argue precisely as in the first case to obtain a set $B$ such that $B$ is a yes-instance to $\eps$-$\LL$-\textsc{Free Subset} if and only if $(A,k)$ is a yes-instance to $\LL$-\textsc{Free Subset}.
\endproof

%%%%%%%%%%%%%%%%%
\section{Counting solution-free subsets of a given size}\label{sec:counting}

Consider the following counting problem.

\begin{framed}
\noindent \countprob\\ 
\textit{Input:} A finite set $A \subseteq \mathbb Z$ and $k \in \mathbb N$.\newline
\textit{Question:} How many $\LL$-free subsets of $A$ have cardinality exactly $k$?
\end{framed}

It is clear that, whenever $\LL$ satisfies the conditions of Theorem~\ref{npthm}, there cannot be any polynomial-time algorithm for this problem unless $\P = \NP$, as such an algorithm would certainly tell us whether or not the number of such subsets is zero and hence solve the decision problem.  However, it is interesting to consider the complexity of the counting problem with respect to the parameterisations $k$ and $|A| - k$, as we saw in Section \ref{sec:param-dec} that the decision problem is tractable with respect to both of these parameterisations.

In Section~\ref{sec:counting-dual} we show that the counting problem is in $\FPT$ when parameterised by $|A| - k$; similar to the proof of Theorem \ref{thm:dual-param-dec}, this result relies on a reduction to a counting version of \textsc{Hitting Set}.  In contrast, we show in Section \ref{sec:counting-k} that, for certain equations $\LL$, the counting problem is unlikely to admit an fpt-algorithm when parameterised by $k$; however, in many cases there is an efficient parameterised algorithm to solve this problem \emph{approximately}, as we will see in Section \ref{sec:counting-approx}.  We begin in Section \ref{sec:counting-cplx} with some background on the theory of parameterised counting complexity.

\subsection{Parameterised counting complexity}
\label{sec:counting-cplx}

We make use of the theory of parameterised counting complexity developed by Flum and Grohe \cite{flum04,flumgrohe}.  Let $\Sigma$ be a finite alphabet.  A parameterised counting problem is formally defined to be a pair $(\Pi,\kappa)$ where $\Pi: \Sigma^* \rightarrow \mathbb{N}_0$ is a function  and $\kappa: \Sigma^* \rightarrow \mathbb{N}$ is a parameterisation (a polynomial-time computable mapping).  Flum and Grohe define two types of parameterised counting reductions, \emph{fpt parsimonious reductions} and \emph{fpt Turing reductions}.  The latter is more flexible than the former, as it does not require us to preserve the number of witnesses as we tranform between problems; rather we must be able to compute the number of witnesses in one problem using information about the number of witnesses in one or more instances of the other problem, which allows us to make use of several standard techniques for counting reductions (such as polynomial interpolation and matrix inversion).  
\begin{adef}
An fpt Turing reduction from $(\Pi,\kappa)$ to $(\Pi',\kappa')$ is an algorithm $A$ with an oracle to $\Pi'$ such that
\begin{enumerate}
\item $A$ computes $\Pi$,
\item $A$ is an fpt-algorithm with respect to $\kappa$, and
\item there is a computable function $g:\mathbb{N} \rightarrow \mathbb{N}$ such that for all oracle queries ``$\Pi'(I') = \; ?$'' posed by $A$ on input $I$ we have $\kappa'(I') \leq g\left(\kappa(I)\right)$.
\end{enumerate}
In this case we write $(\Pi,\kappa)$ \leqfptT $(\Pi',\kappa')$.
\end{adef}
There is an analogue of the W-hierarchy for counting problems; in order to demonstrate that a parameterised counting problem is unlikely to belong to $\FPT$ it suffices to show that it is hard (with respect to fpt-Turing reductions) for the first level of this heirarcy, \#W[1] (see \cite{flumgrohe} for the formal definition of the class \#W[1]).

A parameterised counting problem is considered to be efficiently approximable if it admits a \emph{fixed parameter tractable randomised approximation scheme (FPTRAS)}, which is defined as follows:
\begin{adef}
A fixed parameter tractable randomised approximation scheme (FPTRAS) for a parameterised counting problem $(\Pi,\kappa)$ is a randomised approximation scheme that takes an instance $I$ of $\Pi$ (with $|I| = n$), and rational numbers $\eps > 0$ and $0 < \delta < 1$, and in time $f(\kappa(I)) \cdot g(n,1/\eps,\log(1/\delta))$ (where $f$ is any computable function, and $g$ is a polynomial in $n$, $1/\eps$ and $\log(1 / \delta)$) outputs a rational number $z$ such that
$$\mathbb{P}[(1-\eps)\Pi(I) \leq z \leq (1 + \eps)\Pi(I)] \geq 1 - \delta.$$
\end{adef}

\subsection{Parameterisation by the number of elements not included in the solution-free set}
\label{sec:counting-dual}

In this section we show that the counting problem, parameterised by $|A|-k$, is in $\FPT$.  This is a straightforward extension of the argument used for the decision problem in Section \ref{sec:param-dec}: since there is a one-to-one correspondence between $\LL$-free subsets of cardinality $k$ and hitting sets of cardinality $|A|-k$ in the construction described in Lemma \ref{lma:to-HS}, we can also make use of parameterised algorithms for \emph{counting} hitting sets to count $\LL$-free subsets.  Thurley \cite{thurley07} describes an fpt-algorithm for the following counting version of the problem.  

\begin{framed}
\noindent \#\textbf{p}-card-\textsc{Hitting Set}\newline
\textit{Input:} A hypergraph $G=(V,E)$ and $s \in \mathbb{N}$.\\
\textit{Parameter:} $s + d$, where $d = \max_{e \in E} |e|$. \newline
\textit{Question:} How many hitting sets in $G$ have cardinality exactly $s$?
\end{framed}

As for Theorem \ref{thm:dual-param-dec}, our result now follows immediately.

\begin{thm}
Let $\LL$ be any fixed linear equation.  Then \countprob, parameterised by $|A|-k$, belongs to $\FPT$.
\end{thm}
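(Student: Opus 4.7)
The plan is to mirror the approach used for the decision problem in Theorem~\ref{thm:dual-param-dec}, but now invoking the counting version of parameterised hitting set, \#\textbf{p}-card-\textsc{Hitting Set}, rather than its decision analogue. Fortunately, Lemma~\ref{lma:to-HS} was already stated in a form strong enough to be reused for counting: it provides a \emph{one-to-one correspondence} (in fact, a self-inverse bijection $\theta(B) = A \setminus B$) between $\LL$-free subsets of $A$ of cardinality $k$ and hitting sets of cardinality $|A|-k$ in a certain hypergraph $G$, rather than merely preserving existence.

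Concretely, given an instance $(A,k)$ of \countprob, I would first apply Lemma~\ref{lma:to-HS} to construct, in time polynomial in $\size{A}$, a hypergraph $G$ on $|A|$ vertices in which every edge contains at most $\ell$ vertices, where $\ell$ is the fixed number of variables of $\LL$. The bijectivity of the correspondence immediately implies
\[
\#\{\text{$\LL$-free subsets of $A$ of size $k$}\} = \#\{\text{hitting sets in $G$ of size $|A|-k$}\}.
\]
So it suffices to compute the right-hand side.

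To do this, I would invoke Thurley's fpt-algorithm for \#\textbf{p}-card-\textsc{Hitting Set} on the instance $(G, |A|-k)$, whose parameter is $s + d$ with $s = |A|-k$ and $d = \max_{e \in E(G)} |e|$. Since $d \leq \ell$ and $\ell$ is an absolute constant depending only on the (fixed) equation $\LL$, the parameter $s + d$ is bounded by $(|A|-k) + \ell$, a computable function of the original parameter $|A|-k$. Consequently the total running time is of the form $f((|A|-k) + \ell) \cdot |A|^{\mathcal{O}(1)}$, which is an fpt running time with respect to $|A|-k$, as required.

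No significant obstacle is anticipated: the argument is just a composition of the reduction of Lemma~\ref{lma:to-HS} with a known fpt-algorithm, and the only condition to verify is that the edge-size parameter $d$ in Thurley's problem is absorbed into a constant, which is immediate from the construction. In particular, we do not need to worry about preserving any more refined structure than the bijection already supplied by Lemma~\ref{lma:to-HS}, since the fpt-algorithm for \#\textbf{p}-card-\textsc{Hitting Set} returns the exact count, so no Turing-style post-processing (e.g.\ interpolation or inclusion-exclusion) is needed.
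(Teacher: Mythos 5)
Your proposal is correct and follows exactly the paper's argument: compose the cardinality-preserving bijection of Lemma~\ref{lma:to-HS} with Thurley's fpt-algorithm for \#\textbf{p}-card-\textsc{Hitting Set}, observing that the edge-size bound $d \leq \ell$ is a constant so the parameter $s+d$ depends only on $|A|-k$. No differences worth noting.
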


\subsection{Parameterisation by the cardinality of the solution-free set}
\label{sec:counting-k}
 
In contrast with the positive result in the previous section, we now show that there is unlikely to be an fpt-algorithm with respect to the parameter $k$ to solve \countprob.  To do this, we give an fpt-Turing reduction from the following problem, which can easily be shown to be \#W[1]-hard by means of a reduction from \paramcount{Clique} (shown to be \#W[1]-hard in \cite{flum04}), along the same lines as the proof of the W[1]-hardness of \paramdec{Multicolour Clique} in \cite{fellows09}.

\begin{framed}
\noindent \paramcount{Multicolour Clique}\newline
\textit{Input:} A graph $G = (V,E)$, and a partition of $V$ into $k$ sets $V_1,\ldots,V_k$.\newline
\textit{Parameter:} $k$ \newline
\textit{Question:} How many $k$-vertex cliques in $G$ contain exactly one vertex from each set $V_1,\ldots,V_k$?
\end{framed}

\noindent
When reducing from \paramdec{Multicolour Clique} or its counting version, it is standard practice to assume that, for each $1 \leq i < j \leq k$, the number of edges from $V_i$ to $V_j$ is equal.  We can make this assumption without loss of generality because we can easily transform an instance which does not have this property to one which does without changing the number of multicolour cliques; note that if the input does not already satisfy this condition then $k \geq 3$.  We set $q := \max\{e(V_i,V_j): 1 \leq i < j \leq k\}$ (where $e(A,B)$ denotes the number of edges with one endpoint in $A$ and the other in $B$), and for any pair of sets $(V_i,V_j)$ where $e(V_i,V_j) = q'<q$, we add vertices $\{u_1,\ldots,u_{q-q'}\}$ to $V_i$ and $\{w_1,\ldots,w_{q-q'}\}$ to $V_j$, and the set of edges $\{u_rw_r:1 \leq r \leq q-q'\}$; note that the largest cliques created by this process contain two vertices.  

We in fact reduce \paramcount{Multicolour Clique} to a multicolour version of \countprob, defined as follows.

\begin{framed}
\noindent \paramcount{Multicolour $\LL$-Free Subset}\newline
\textit{Input:} A $k$-tuple of disjoint subsets $A_1,\ldots,A_k \subseteq \mathbb{Z}$.\newline
\textit{Parameter:} $k$ \newline
\textit{Question:} How many $\LL$-free subsets of $A = \bigcup_{1 \leq i \leq k} A_i$ contain exactly one element from each set $A_1,\ldots,A_k$?
\end{framed}

It is easy to give an fpt-Turing reduction from \paramcount{Multicolour $\LL$-Free Subset} to \countprob\, parameterised by $k$.

\begin{lemma}
\label{lma:reduce-multicol-L}
Let $\LL$ be a linear equation.  Then \paramcount{Multicolour $\LL$-Free Subset} \leqfptT \countprob (where \countprob\, is parameterised by $k$).
\end{lemma}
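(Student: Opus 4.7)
The plan is to use a standard inclusion--exclusion argument, making one oracle call to \countprob\ for each subset $T\subseteq [k]$. Given an instance $(A_1,\ldots,A_k)$ of \paramcount{Multicolour $\LL$-Free Subset}, write $A:=\bigcup_{i\in[k]}A_i$, and for each $T\subseteq[k]$ set $A_T:=\bigcup_{i\in T}A_i$; let $N(T)$ denote the number of $\LL$-free subsets of $A_T$ of cardinality exactly $k$.  Since $\LL$-freeness is hereditary under taking subsets, $N(T)$ equals the number of $\LL$-free $k$-subsets of $A$ that happen to be contained in $A_T$, so a single query to the \countprob\ oracle on the input $(A_T,k)$ returns $N(T)$.

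To extract the multicolour count from the numbers $\{N(T)\}_{T\subseteq[k]}$, let $X$ denote the set of $\LL$-free $k$-subsets of $A$, and for each $i\in[k]$ let $X_i\subseteq X$ consist of those members disjoint from $A_i$.  Since the $A_i$ are pairwise disjoint, any $k$-subset of $A$ meeting each $A_i$ must contain exactly one element from each; thus the answer we seek is $|X\setminus \bigcup_{i\in[k]} X_i|$.  The standard inclusion--exclusion identity, together with the observation that $\bigcap_{i\in S}X_i$ is exactly the set of $\LL$-free $k$-subsets of $A$ contained in $A_{[k]\setminus S}$, yields
\[
\Bigl|X\setminus \bigcup_{i\in[k]} X_i\Bigr| \;=\; \sum_{S\subseteq [k]} (-1)^{|S|}\Bigl|\bigcap_{i\in S} X_i\Bigr| \;=\; \sum_{T\subseteq [k]} (-1)^{k-|T|}\, N(T),
\]
after the substitution $T:=[k]\setminus S$.

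The reduction thus issues $2^k$ oracle queries, one per subset $T\subseteq [k]$, on inputs of size at most $\size{A}$ and each with parameter $k$, and then combines the returned values via the displayed signed sum.  Both the construction of the oracle inputs and the final arithmetic run in time $f(k)\cdot \size{A}^{\mathcal{O}(1)}$ for a computable function $f$, and the parameter of every oracle query equals the original parameter $k$, so all three requirements in the definition of an fpt-Turing reduction are met.  There is no real obstacle to overcome here: the only point that needs checking is that the inclusion--exclusion indexing lines up so that each $N(T)$ is directly the value returned by a \countprob\ oracle call on $(A_T,k)$, which is precisely what hereditariness of the $\LL$-free property guarantees.
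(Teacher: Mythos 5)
Your proposal is correct and follows essentially the same route as the paper: both count $\LL$-free $k$-subsets of each union $\bigcup_{i\in T}A_i$ via one oracle call per subset $T\subseteq[k]$ and recover the multicolour count by the signed sum $\sum_T(-1)^{k-|T|}N(T)$. The only cosmetic difference is that you include $T=\emptyset$ in the sum (which contributes $0$), and you spell out the inclusion--exclusion justification in slightly more detail than the paper does.
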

\begin{proof}
Let $(A_1,\ldots,A_k)$ be the input to an instance of \paramcount{Multicolour $\LL$-Free Subset}.  For each non-empty $I \subseteq [k]$, we can use our oracle to $\LL$-Free Subset to find $N_I$, the number of  $\LL$-free subsets of cardinality exactly $k$ in the set $\bigcup_{i \in I} A_i$.  This requires $\Theta(2^k)$ oracle calls, and for each oracle call the parameter value is the same as for the original problem.  We can now use an inclusion-exclusion method to compute the number of  $\LL$-free subsets of size $k$ in $A$ that contain exactly one number from each of the sets $A_i$: this is precisely
$$\sum_{\emptyset \neq I \subseteq [k]} (-1)^{k-|I|}N_I.$$
\end{proof}

The main work in the reduction is in the next lemma, where we show that \paramcount{Multicolour Clique} can be reduced to \paramcount{Multicolour $\LL$-Free Subset} for certain equations $\LL$.
\begin{lemma}
\label{lma:reduce-clique-L}
Let $\LL$ be a linear equation of the form $a_1x_1 + a_2x_2 = by$, where $a_1,a_2,b \in \mathbb{N}$ are fixed.  Then \paramcount{Multicolour Clique} \leqfptT \paramcount{Multicolour $\LL$-Free Subset}.
\end{lemma}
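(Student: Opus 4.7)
The plan is to apply Lemma \ref{useful1} to a family of extensions of $G$ and recover the colorful $k$-clique count of $G$ from the oracle answers by polynomial interpolation. Starting from $(G,V_1,\ldots,V_k)$, I would invoke the remark preceding the lemma to assume that every pair $(V_i,V_j)$ spans the same number $r$ of edges of $G$. For each integer $q$ in the range $r\le q\le r+\binom{k}{2}$, build an extended graph $H_q$ by adding, for each pair $i<j$, a set of $q-r$ \emph{virtual} edges forming a matching between fresh pairs of virtual vertices, so that in $H_q$ every pair $(V_i^+,V_j^+)$ spans exactly $q$ edges.

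Next I would apply Lemma \ref{useful1} to $H_q$ with the equation $\LL$, but then \emph{remove} from the resulting set $A'$ all the integers corresponding to virtual vertices, keeping the edge set $A''$ intact; call the result $\widehat A_q$. Partition $\widehat A_q$ into $k+\binom{k}{2}$ color classes: the vertex classes $\phi_V^{-1}(V_i)$ for $i\in[k]$, and the edge classes $\mathcal A_{ij}$ collecting the $q$ elements of $A''$ whose associated edge of $H_q$ lies between $V_i^+$ and $V_j^+$. By Lemma \ref{useful1}(3), every non-trivial solution to $\LL$ in the unrestricted $A_q$ uses two elements of $A'$ together with one of $A''$, so deleting the virtual-vertex elements kills every solution that would have involved a virtual edge; in particular, the padding introduces no spurious $\LL$-solutions.

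I would then verify that a colorful subset $\{v_1,\ldots,v_k\}\cup\{e_{ij}\}_{i<j}$ of $\widehat A_q$ is $\LL$-free if and only if $\phi_E(e_{ij})\neq\{\phi_V(v_i),\phi_V(v_j)\}$ for every $i<j$. Writing $M_q$ for the oracle's answer on this instance and $[\cdot]$ for the Iverson bracket, this gives
\[
M_q \;=\; \sum_{(v_1,\ldots,v_k)\in V_1\times\cdots\times V_k}\,\prod_{i<j}\bigl(q-[v_iv_j\in E(G)]\bigr) \;=\; \sum_{T\subseteq\binom{[k]}{2}}(-1)^{|T|}\,q^{\binom{k}{2}-|T|}\,N_T,
\]
where $N_T$ denotes the number of colorful $k$-tuples in which every pair indexed by $T$ spans an edge of $G$. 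Thus $q\mapsto M_q$ is a polynomial of degree $\binom{k}{2}$ whose constant term is $(-1)^{\binom{k}{2}}$ times the desired colorful clique count $N_{\binom{[k]}{2}}$, which is recoverable by solving a Vandermonde system from the $\binom{k}{2}+1$ oracle evaluations.

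Each instance fed to the oracle has parameter $k+\binom{k}{2}=O(k^2)$, and is constructible in time polynomial in $\size{A}$ since only $O(k^2)$ virtual vertices ever appear and hence the base-$d$ encoding from Lemma \ref{useful1} keeps the integers of polynomial bit-length; the total number of oracle calls is also $O(k^2)$, yielding the desired fpt-Turing reduction. The main point to get right is the padding argument: we need to inflate each edge class $\mathcal A_{ij}$ up to the common target size $q$ without disturbing the set of $\LL$-solutions, and this is precisely what the omission of the virtual vertices from $A'$ accomplishes by way of Lemma \ref{useful1}(3).
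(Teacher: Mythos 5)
Your proposal is correct, and the heart of it coincides with the paper's proof: in both cases one inflates each edge-colour-class to a common size $q$ with ``inert'' elements, observes that the number of colourful $\LL$-free sets equals $\sum_U q^{\binom{k}{2}-e(U)}(q-1)^{e(U)}$, and recovers the multicolour clique count as (up to sign) the constant term of this degree-$\binom{k}{2}$ polynomial via interpolation at $\binom{k}{2}+1$ values of $q$. Where you differ is in how the padding is realised. The paper keeps the graph $G$ fixed, builds $A$ once via Lemma~\ref{useful1}, and then appends extra integers $X_t$ using Lemmas~\ref{extend1} and~\ref{extend2} (needing both to cover $a_1+a_2\neq b$ and $a_1+a_2=b$, and a side assumption $k^4<|V|$ to keep that construction polynomial). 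You instead pad at the graph level with virtual matchings, apply Lemma~\ref{useful1} to each $H_q$, and delete the virtual-vertex integers from $A'$; Lemma~\ref{useful1}(3) then guarantees that the surviving virtual-edge integers cannot occur in any non-trivial solution. This is a clean alternative that avoids the extension lemmas entirely and works uniformly in $a_1,a_2,b$, at the modest cost of re-running the Lemma~\ref{useful1} construction for each of the $\binom{k}{2}+1$ oracle calls. Two small points: the number of virtual vertices is $O(k^4)$ rather than $O(k^2)$ (up to $2\binom{k}{2}$ fresh endpoints per pair, over $\binom{k}{2}$ pairs), which is still bounded by a function of $k$ and so harmless; and you should say explicitly that the $\binom{k}{2}+1$ interpolation points $q=r,\ldots,r+\binom{k}{2}$ are distinct so the Vandermonde system is invertible, which of course they are.
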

\begin{proof}
Let $(G,\{V_1,\ldots,V_k\})$ be the input to an instance of \paramcount{Multicolour Clique}, where $G = (V,E)$, and for each $1 \leq i < j \leq k$ let $E_{i,j}$ denote the set of edges between $V_i$ and $V_j$.  We may assume that $|E_{i,j}|=q$ for each $1 \leq i < j \leq k$ and that each $V_i$ is an independent set.

Suppose that $V = \{v_1,\ldots,v_n\}$.  We begin by constructing a set $A\subseteq \mathbb N$ as in Lemma \ref{useful1}; note that $|A| = \mathcal{O}(|V|^2)$ and $\log (\max(A)) = \mathcal{O}(|V|)$, so $|A| \log(\max (A)) = \mathcal{O}(|V|^3)$.  We partition $A'$ into $k$ subsets $A_1,\ldots,A_k$, where $A_i := \phi_V^{-1}(V_i)$, and $A''$ into $\binom{k}{2}$ subsets $A_{i,j}$ (for $1 \leq i < j \leq k$) where $A_{i,j} := \phi_E^{-1}(E_{i,j})$.

Now let $t \in \{1,\ldots,\binom{k}{2}\}$.  We define $X_t \subseteq \mathbb{N}$ to be a set of $t \binom{k}{2} < k^4$ natural numbers disjoint from $A$, chosen so that every solution to $\LL$ in $A \cup X_t$ is contained in $A$.  Without loss of generality, we may assume that $k^4 < |V|$ (otherwise we would be able to execute a brute force approach in time bounded by a function of $k$ alone), so it follows from Lemmas~\ref{extend1} and~\ref{extend2} that we can construct such a set $X_t$ in time bounded by a polynomial function of $|A| \log(\max(A))$, and hence by a polynomial function of $|V|$.  Moreover, the space required to represent $X_t$ is also bounded by a polynomial function of $|V|$.  We now partition $X_t$ arbitrarily into $\binom{k}{2}$ sets $X_t^{i,j}$ for $1 \leq i < j \leq k$, each of size exactly $t$.  The set $A_{i,j}[t]$ is then defined to be $A_{i,j} \cup X_t^{i,j}$.  We set 
$$A[t] := \bigcup_{1 \leq i \leq k} A_i \; \cup \bigcup_{1 \leq i < j \leq k} A_{i,j}[t].$$

It follows from Lemma \ref{useful1} and the construction of $A[t]$ that the only solutions to $\LL$ in $A[t]$ are of the form $a_1x_1 + a_2x_2 = by$, where $y$ corresponds to an edge whose endpoints are the vertices corresponding to $x_1$ and $x_2$.  We will say that a subset of $A[t]$ is \emph{colourful} if it contains precisely one element from each set $A_i$ (for $1 \leq i \leq k$) and one element from each set $A_{i,j}[t]$ (for $1 \leq i < j \leq k$).

%We define the set $A[t] \subseteq \mathbb{N}$ from as follows: to each set $A_{i,j}''$ we add precisely $t$ additional elements $a_i^1,\ldots,a_i^t$, chosen so that there are no solutions to $\LL$ in $A[t]$ involving any $a_i^j$ for $1 \leq i \leq k$ and $1 \leq j \leq t$.  By Lemma \ref{useful1}, we therefore know that the only solutions to $\LL$ in $A[t]$ are of the form $a_1x_1 + a_2x_2 = by$ where $y$ corresponds to an edge whose endpoints are the vertices corresponding to $x_1$ and $x_2$.  In total, we add $t \binom{k}{2}$ elements to $A$ to obtain $A[t]$; without loss of generality we may assume $k^4 < |V|$ (otherwise we would be able to execute a brute force approach in time bounded only by a function of $k$), so it follows from Lemma \ref{extend1} that we can construct such a set $A[t]$ in time polynomial by a polynomial function of $|A| \log(\max|A|)$, and hence by a polynomial function of $|V|$.  Moreover, the space required to represent $A[t]$ is also bounded by a polynomial function of $|V|$.
% We define a collection of colourings $g_t:A[t] \rightarrow [k']$ (where $k': = k + \binom{k}{2}$) by assigning a unique colour to each set $A_i'$ for $1 \leq i \leq k$ and $A_{i,j}''$ for each $1 \leq i < j \leq k$.

Let $N(A[t])$ denote the number of $\LL$-free subsets of $A[t]$ that are colourful.  We can compute the value of $N(A[t])$ using a single call to our oracle for \paramcount{Multicolour $\LL$-Free Subset} with input $(A_1,\ldots,A_k,A_{1,2}[t],\ldots,A_{k-1,k}[t])$; note that the total size of the instance in such an oracle call is bounded by $h(k) \cdot |V|^{\mathcal{O}(1)}$ for some function $h$, and that the value of the parameter in our oracle call depends only on $k$.

Given any subset $U \subseteq V$ such that $|U \cap V_i| = 1$ for each $1 \leq i \leq k$, let us denote by $N(A[t],U)$ the number of colourful $\LL$-free subsets of $A[t]$ whose intersection with $A'$ is precisely $\phi_V^{-1}(U)$.  We now claim that 
$$N(A[t],U) = (q+t)^{\binom{k}{2}-e(U)}(q+t-1)^{e(U)},$$
where $e(U)$ denotes the number of edges in the subgraph of $G$ induced by $U$.  To see that this is true, suppose that $U = \{w_1,\ldots,w_k\}$, where $w_i \in V_i$ for each $i$.  If $w_i$ and $w_j$ are not adjacent, then we can choose freely any element of $A_{i,j}[t]$ to add to the set, without risk of creating a solution to $\LL$, so there are $\left|A_{i,j}[t]\right| = q+t$ possibilities for the element of $A_{i,j}[t]$ in the set; on the other hand, if $w_i$ and $w_j$ are adjacent, we must avoid the element of $A_{i,j}[t]$ corresponding to $w_iw_j$, so there are $\left|A_{i,j}[t]\right| - 1 = q+t -1$ possibilities for the element of $A_{i,j}[t]$ in the set.  Since we can choose each element of $A''$ to include in the set independently of the others, this gives the expression above for $N(A[t],U)$.

Observe now that
\begin{align*}
N(A[t]) \quad & = \sum_{\substack{U \subseteq V \\ \text{$|U \cap V_i| = 1$ for each $1 \leq i \leq k$}}} N(A[t],U) \\
		  & = \sum_{\substack{U \subseteq V \\ \text{$|U \cap V_i| = 1$ for each $1 \leq i \leq k$}}} (q+t)^{\binom{k}{2}-e(U)}(q+t-1)^{e(U)}.
\end{align*}
For $0 \leq j \leq \binom{k}{2}$, let $C_j$ denote the number of subsets $U \subset V$ such that $|U \cap V_i| = 1$ for each $1 \leq i \leq k$ and $e(U) = j$.  We can then rewrite the expression above as
$$N(A[t]) = \sum_{j=0}^{\binom{k}{2}} C_j (q+t)^{\binom{k}{2}-j}(q+t-1)^j.$$
If we now define
$$p(z) = \sum_{j=0}^{\binom{k}{2}} C_j z^{\binom{k}{2}-j}(z-1)^j,$$
it is clear that $p$ is a polynomial in $z$ of degree $\binom{k}{2}$, and moreover that $p(z) = N(A[z-q])$.  Thus if we know the value of $p(z)$ for $\binom{k}{2} + 1$ distinct values of $z$, we can interpolate in polynomial time to determine all the coefficients of $p$; we can obtain the required values of $p(z)$ by using our oracle to evaluate $N(A[t])$ for $t \in \{0,1,\ldots,\binom{k}{2}\}$.

To complete the reduction we must demonstrate that, once we know the coefficients of $p(z)$, it is straightforward to calculate the number of multicolour cliques in $G$.  We will in fact argue that we only need to determine the constant term of $p(z)$.  Note that, if $0 \leq j < \binom{k}{2}$, then $z$ is a factor of $C_j z^{\binom{k}{2}-j}(z-1)^j$.  Thus the constant term of $p(z)$ is the same as the constant term of the polynomial
$$C_{\binom{k}{2}}z^{\binom{k}{2}-\binom{k}{2}}(z-1)^{\binom{k}{2}} = C_{\binom{k}{2}} (z-1)^{\binom{k}{2}}.$$
This constant term is
$$C_{\binom{k}{2}} (-1)^{\binom{k}{2}},$$
so the absolute value of the constant term in $p(z)$ is precisely $C_{\binom{k}{2}}$.  But $C_{\binom{k}{2}}$ is by definition the number of subsets $U \subseteq V$ such that $|U \cap V_i| = 1$ for each $i$ and $e(U) = \binom{k}{2}$, that is the number of multicolour cliques in $G$.  

This completes the fpt-Turing reduction from \paramcount{Multicolour Clique} to \paramcount{Multicolour $\LL$-Free Subset}.
\end{proof}

%Finally, we show how we can reduce to remove some of the restrictions on the equation $\LL$.

%\begin{lemma}
%\label{lma:reduce-L1-L2}
%Let $\LL_1$ be a linear equation of the form $a_1x_1 + a_2x_2 = by$, where $a_1,a_2,b \in \mathbb{N}$ are fixed, and let $\LL_2$ be a linear equation of the form $c_1x_1 + \cdots + c_{\ell}x_{\ell} = dy$, where $c_1,\ldots,c_{\ell} \in \mathbb{N}$ and $d \in \mathbb{N}$ are fixed and $\ell \geq 3$.  Then \paramcount{Multicolour $\LL_1$-Free Subset} \leqfptT \paramcount{Multicolour $\LL_2$-Free Subset}.
%\end{lemma}
%\begin{proof}
%DO THIS!
%\end{proof}
 
The main result of this section now follows immediately from Lemmas~\ref{lma:reduce-multicol-L} and~\ref{lma:reduce-clique-L}.

\begin{thm}
Let $\LL$ be a linear equation of the form $a_1x_1+a_2x_2 = b y$ where $a_1,a_2,b \in \mathbb{N}$ are fixed.  Then \countprob, parameterised by $k$, is \#W[1]-hard with respect to fpt-Turing reductions.
\end{thm}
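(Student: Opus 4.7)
The plan is to combine the two preceding lemmas to obtain the desired hardness result by composition of fpt-Turing reductions. Specifically, Lemma~\ref{lma:reduce-clique-L} provides an fpt-Turing reduction from \paramcount{Multicolour Clique} to \paramcount{Multicolour $\LL$-Free Subset}, valid for any equation $\LL$ of the specified form $a_1x_1 + a_2x_2 = by$; and Lemma~\ref{lma:reduce-multicol-L} provides an fpt-Turing reduction from \paramcount{Multicolour $\LL$-Free Subset} to \countprob (parameterised by $k$) for any linear equation $\LL$.

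The first step is to recall that \paramcount{Multicolour Clique} is known to be \#W[1]-hard with respect to fpt-Turing reductions; this follows from the \#W[1]-hardness of \paramcount{Clique} (established by Flum and Grohe~\cite{flum04}) via the standard colour-coding/graph-blow-up construction analogous to that used in~\cite{fellows09} for the decision version, as noted in the text preceding the two lemmas.

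The next step is to verify that the composition of an fpt-Turing reduction from $(\Pi_1,\kappa_1)$ to $(\Pi_2,\kappa_2)$ with an fpt-Turing reduction from $(\Pi_2,\kappa_2)$ to $(\Pi_3,\kappa_3)$ yields an fpt-Turing reduction from $(\Pi_1,\kappa_1)$ to $(\Pi_3,\kappa_3)$; this is a standard fact about the \leqfptT relation (see~\cite{flumgrohe}), and follows because if $g_1$ and $g_2$ are the parameter-bounding functions for the two reductions, then $g_2 \circ g_1$ serves as a parameter-bounding function for the composition, and composing fpt-algorithms with oracle calls remains an fpt-algorithm. Applied to our setting, on input a coloured graph $(G,\{V_1,\dots,V_k\})$, one runs the reduction of Lemma~\ref{lma:reduce-clique-L}; each oracle call it makes to \paramcount{Multicolour $\LL$-Free Subset} with parameter bounded by a function of $k$ is then answered by invoking the reduction of Lemma~\ref{lma:reduce-multicol-L}, which in turn makes $\Theta(2^k)$ oracle calls to \countprob, each of which preserves the parameter value.

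There is no real obstacle here beyond checking that the parameter dependence is tracked correctly through the composition: the parameter of the final \countprob queries is bounded by a computable function of $k$, the running time outside the oracle calls is fpt in $k$, and the reductions themselves produce valid instances because each step is individually correct. Consequently, \countprob parameterised by $k$ is \#W[1]-hard under fpt-Turing reductions, as required.
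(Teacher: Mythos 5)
Your proposal is correct and takes exactly the same route as the paper: the theorem is obtained by composing the fpt-Turing reduction of Lemma~\ref{lma:reduce-clique-L} with that of Lemma~\ref{lma:reduce-multicol-L}, starting from the \#W[1]-hardness of \paramcount{Multicolour Clique}. Your additional remarks on the transitivity of \leqfptT and the tracking of the parameter through the composition are standard and unobjectionable.
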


%%%%%%%%%%%%%%%%%%
\subsection{Approximate counting}
\label{sec:counting-approx}
For $3$-variable homogeneous linear equations $\LL$,
we have seen that there is unlikely to be an fpt-algorithm, with parameter $k$, to solve \countprob\, \emph{exactly}; in this section we show, however, that the problem does admit an FPTRAS (for any non-translation-invariant homogeneous linear equation $\LL$).  The algorithm uses a simple random sampling method; the only requirement is to demonstrate that there are sufficiently many $\LL$-free subsets of size exactly $k$ that we can obtain a good estimate of the overall proportion of such subsets that are $\LL$-free without having to perform too many iterations of the sampling process.  Note that the proof of this result follows that of \cite[Lemma 3.4]{treewidth} very closely.  We write $A^{(k)}$ to denote the set of all subsets of $A$ of size $k$.

\begin{lemma}
\label{lma:dense-approx}
Let $\LL$ be any  non-translation-invariant homogeneous linear equation.   Let $A \subset \mathbb{Z}$ be finite and $k \in \mathbb{N}$, and let $N$ denote the number of elements of $A^{(k)}$ that are $\LL$-free.  Then, for every $\eps > 0$ and $\delta \in (0,1)$ there is an explicit randomised algorithm which outputs an integer $\alpha$, such that
$$\mathbb{P}[|\alpha - N| \leq \eps \cdot N] \geq 1 - \delta,$$
and runs in time at most $f(k)q(\size{A},\eps^{-1},\log(\delta^{-1}))$, where $f$ is a computable function, $q$ is a polynomial.
\end{lemma}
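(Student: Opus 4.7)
The approach is a standard Monte Carlo sampling algorithm. The plan is to draw $M$ size-$k$ subsets of $A$ uniformly at random, test each for $\LL$-freeness (which, as noted in Section~\ref{sec:complexity}, can be done for a fixed $k$-element set in time $f(k)\cdot\mathrm{poly}(\size{A})$), and scale the empirical proportion of successes to estimate $N$. Since each iteration runs in time $f(k)\cdot\mathrm{poly}(\size{A})$, it suffices to show that $M$ can be chosen as a function of $k$, $\epsilon^{-1}$ and $\log(\delta^{-1})$ alone.

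The key quantity to control is the density $p := N/\binom{|A|}{k}$ of $k$-subsets that are $\LL$-free. If $|A|$ is bounded by a function of $k$ alone (say, $|A|\leq 2k/\lambda(\LL)+1$) we simply solve the problem exactly by brute force. Otherwise, set $A^{*} := A\setminus\{0\}$ and apply Theorem~\ref{lamthm2} together with the explicit constant $\lambda=\lambda(\LL)>0$ given by~(\ref{quote}) to obtain an $\LL$-free subset $S\subseteq A^{*}$ with $|S|\geq \lambda|A^{*}|$. Every $k$-subset of $S$ is itself $\LL$-free, so
\[
N \;\geq\; \binom{|S|}{k} \;\geq\; \binom{\lfloor\lambda(|A|-1)\rfloor}{k},
\]
and a short calculation shows that, once $|A|$ is large enough in terms of $k$, the ratio $\binom{\lfloor\lambda(|A|-1)\rfloor}{k}/\binom{|A|}{k}$ is at least $(\lambda/2)^{k}=:g(k)$. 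Thus $p\geq g(k)$, a lower bound that depends only on $k$.

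With this density bound in hand, I would take $M := \lceil 3\log(2/\delta)/(\epsilon^{2}g(k))\rceil$ and output $\alpha$ equal to the nearest integer to $\binom{|A|}{k}\cdot X/M$, where $X$ is the number of sampled subsets found to be $\LL$-free. A standard multiplicative Chernoff bound gives
\[
\Pr\bigl[|X/M-p|\geq\epsilon p\bigr] \;\leq\; 2\exp\!\bigl(-\epsilon^{2}pM/3\bigr) \;\leq\; 2\exp\!\bigl(-\epsilon^{2}g(k)M/3\bigr) \;\leq\; \delta,
\]
and multiplying through by $\binom{|A|}{k}$ yields $|\alpha-N|\leq\epsilon N$ with probability at least $1-\delta$ (the additive rounding error of at most $1$ is absorbed into $\epsilon N$ whenever $N\geq 1$; the case $N=0$ does not occur once $|A|\geq 2k/\lambda+1$). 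The overall running time is $M\cdot f(k)\cdot\mathrm{poly}(\size{A})$, which fits the required bound.

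The only substantive step is the density lower bound $p\geq g(k)$, and this is exactly what Theorem~\ref{lamthm2} delivers for non-translation-invariant homogeneous $\LL$. The remaining ingredients---uniform sampling of $k$-subsets, a polynomial-time $\LL$-freeness check, and the Chernoff concentration---are entirely routine. It is worth noting that the non-translation-invariance hypothesis is essential: for a translation-invariant equation such as $x+z=2y$, Ruzsa's theorem ensures that $p$ can decay with $|A|$, so the number of samples needed would no longer be bounded in terms of $k$ alone and this approach would collapse.
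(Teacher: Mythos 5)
Your proposal is correct and follows essentially the same route as the paper: uniform sampling of $k$-subsets with a polynomial-time $\LL$-freeness check, a Chernoff bound to fix the number of samples, and Theorem~\ref{lamthm2} (with the explicit $\lambda$ from equation~(\ref{quote})) to lower-bound the density of $\LL$-free $k$-subsets by a function of $k$ alone after discarding $0$ and handling small $|A|$ by brute force. The only cosmetic difference is that the paper bounds the ratio $\binom{|A'|}{k}/N$ directly by $(1/\lambda + k/\lambda^{2})^{k}$ rather than via your $(\lambda/2)^{k}$ estimate; both are fine.
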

\begin{proof}
Let $\lambda=\lambda (\LL)>0$ be as defined in (\ref{quote}).  We begin by setting $A' := A \setminus \{0\}$: note that, as no $\LL$-free subset of $A$ can contain $0$, both $A$ and $A'$ contain precisely the same number of $\LL$-free subsets of size $k$.  We may assume throughout that $|A| \geq \frac{k}{\lambda} + 2$ and hence $|A'| \geq \frac{k}{\lambda} + 1$, otherwise we could count $\LL$-free subsets deterministically by brute force within the required time bound.

Let $N$ denote the total number of elements of $A'^{(k)}$ that are $\LL$-free: our goal is then to compute an approximation to $N$.  We do this using a simple random sampling algorithm.  At each step, a subset of $A'$ of size $k$ is chosen uniformly at random from all elements of $A'^{(k)}$; 
we then check in time $\mathcal{O}(k^{\ell}\log (\max^* (A')))$
 (where $\ell$ is the number of variables in $\LL$) whether the chosen subset is $\LL$-free.

To obtain a good estimate for $N$, we repeat this sampling process $t$ times (for some value of $t$ to be determined); we will denote by $t_1$ the number of sets selected in this way that are indeed $\LL$-free.  We then output as our approximation $\frac{t_1}{t} \binom{|A'|}{k}$.  Note that in the case that $N=0$, we are certain to output 0, as required.

The value of $t$ must be chosen to be large enough so that 
$$\mathbb{P}\left[\left| \frac{t_1}{t} \binom{|A'|}{k} - N \right|  > \eps N \right] \leq \delta,$$
or equivalently
$$\mathbb{P}\left[\left| t_1 - \frac{t N}{\binom{|A'|}{k}} \right|  > \eps \frac{t N}{\binom{|A'|}{k}} \right] \leq \delta.$$
Note that $t_1$ has distribution $\mathrm{Bin}(t,p)$, where $p := \frac{N}{\binom{|A'|}{k}}$, so the expected value of $t_1$ is exactly $\frac{t N}{\binom{|A'|}{k}}$.

Using a Chernoff bound, we therefore see that
\begin{align*}
\mathbb{P} \left[\left| t_1 - \frac{t N}{\binom{|A'|}{k}} \right|  > \eps \frac{t N}{\binom{|A'|}{k}} \right] & \leq 2 \exp \left(\frac{- \eps^2 t N}{(2+\eps)\binom{|A'|}{k}}\right), 
\end{align*}
so it is enough to ensure that
$$2 \exp \left(\frac{- \eps^2 t N}{(2+\eps)\binom{|A'|}{k}}\right) \leq \delta.$$
In other words,
\begin{align*}
\frac{- \eps^2 t N}{(2+\eps)\binom{|A'|}{k}} & \leq \log{\delta} - \log{2} \\
\iff t & \geq \frac{\binom{|A'|}{k} (2 + \eps) \left(\log{2} - \log{\delta}\right)}{\eps^2 N} \\
& = \frac{\binom{|A'|}{k}}{N} \left(2 \eps^{-2} + \eps^{-1} \right)\left(\log{2} + \log{\delta^{-1}}\right).
\end{align*}
So, in order to show that we can choose a value of $t$ that is not too large, it suffices to demonstrate that $\displaystyle\frac{\binom{|A'|}{k}}{N}$ is bounded by $f(k)\tilde{q}(\size{A})$, where $f$ is some computable function and $\tilde{q}$ is a polynomial.

However, by Theorem~\ref{lamthm2} we know that $A'$ contains an $\LL$-free subset $B$ of size at least $\lambda |A'|$, and any subset of $B$ is necessarily $\LL$-free; therefore $N \geq \binom{\lambda |A'|}{k}$.  Hence
$$\frac{\binom{|A'|}{k}}{N} \leq \frac{\binom{|A'|}{k}}{\binom{\lambda |A'|}{k}} \leq \left(\frac{|A'|}{\lambda|A'|-k}\right)^k = \left( \frac{\frac{1}{\lambda}|A'|}{|A'|- \frac{1}{\lambda}k}\right)^k = \left( \frac{1}{\lambda} + \frac{\frac{1}{\lambda^2}k}{|A'|-\frac{1}{\lambda}k}\right)^k \leq \left(\frac{1}{\lambda} + \frac{1}{\lambda^2}k \right)^k,$$
where the last inequality follows from our initial assumption that $|A'| \geq \frac{k}{\lambda} + 1$.  Hence $\displaystyle\frac{\binom{|A'|}{k}}{N}$ is bounded by a function of $k$, as required.
\end{proof}

The following result now follows immediately from Lemma \ref{lma:dense-approx}.

\begin{thm}
Let $\LL$ be any  non-translation-invariant homogeneous linear equation.   Then \countprob admits an FPTRAS.
\end{thm}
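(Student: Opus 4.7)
The plan is simply to observe that Lemma \ref{lma:dense-approx} already produces, essentially verbatim, an FPTRAS for \countprob when parameterised by $k$, so the only work left is to match up the notation in the FPTRAS definition with what the lemma provides.

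I would first recall the formal definition of an FPTRAS from Section \ref{sec:counting-cplx}: for every instance $I$ of a parameterised counting problem $(\Pi, \kappa)$ with input size $n$, and every rational $\eps > 0$ and $\delta \in (0,1)$, the scheme must output, in time $f(\kappa(I)) \cdot g(n, 1/\eps, \log(1/\delta))$ (with $f$ computable and $g$ polynomial), a value $z$ satisfying
$$\mathbb{P}[(1-\eps)\Pi(I) \leq z \leq (1+\eps)\Pi(I)] \geq 1 - \delta.$$

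Next I would identify the pieces of Lemma \ref{lma:dense-approx} with the pieces of this definition. The instance is $I = (A, k)$, the parameter is $\kappa(I) = k$, the input size is $n = \size{A}$, and the quantity to be approximated is $N$, which by the lemma's setup is exactly the number of $\LL$-free subsets of $A$ of cardinality $k$, i.e.\ $\Pi(A, k)$. The lemma supplies a randomised algorithm that outputs an integer $\alpha$ with $\mathbb{P}[|\alpha - N| \leq \eps N] \geq 1 - \delta$; rearranging, this is precisely $\mathbb{P}[(1 - \eps)N \leq \alpha \leq (1+\eps)N] \geq 1 - \delta$, exactly the FPTRAS guarantee. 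The running-time bound $f(k) \cdot q(\size{A}, \eps^{-1}, \log(\delta^{-1}))$ from the lemma also matches the form required by the definition, since $q$ is a polynomial and $f$ is a computable function of the parameter $k$ only.

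There is no real obstacle in this step. All of the technical difficulty was absorbed into Lemma \ref{lma:dense-approx}, whose proof used Theorem \ref{lamthm2} to lower-bound the density of $\LL$-free subsets of cardinality $k$ among all $k$-subsets of $A$ by $1/h(k)$ for some computable $h$, which is what lets a straightforward Chernoff-based random-sampling argument converge in the required time. The final theorem is therefore a one-line corollary: invoke Lemma \ref{lma:dense-approx} and observe that the resulting algorithm is, by the above identification, an FPTRAS.
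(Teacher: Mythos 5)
Your proposal is correct and matches the paper exactly: the theorem is stated there as an immediate consequence of Lemma \ref{lma:dense-approx}, with all the substance (the density lower bound via Theorem \ref{lamthm2} and the Chernoff-based sampling) contained in that lemma's proof. Your careful matching of the lemma's output guarantee and running-time bound to the FPTRAS definition is precisely the (routine) verification the paper leaves implicit.
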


%%%%%%%%%%%%%%%%%%%%%%%%
\section{An extension version of the problem}\label{sec:ex}

A natural variant of the problem $\LL$-\textsc{Free Subset} is to ask whether, given $A \subseteq \mathbb{Z}$ and an ($\LL$-free) subset $B \subset A$, there is an $\LL$-free subset of $A$ of cardinality $k$ which contains $B$.  This problem can be stated formally as follows.

\begin{framed}
\noindent $\LL$-\textsc{Free Subset Extension}\newline
\textit{Input:} A finite set $A \subseteq \mathbb Z$, a set $B \subset A$ and $k \in \mathbb N$.\newline
\textit{Question:} Does there exist an $\LL$-free subset $A' \subseteq A$ such that $B \subseteq A'$ and $|A'|=k$?
\end{framed}

We can make certain easy deductions about the complexity of this problem from the results we have already proved about $\LL$-\textsc{Free Subset}.  Notice that we can easily define a reduction from $\LL$-\textsc{Free Subset} to $\LL$-\textsc{Free Subset Extension} by setting $B = \emptyset$; the next result follows immediately from this observation together with Theorem \ref{npthm}.

\begin{prop}
Let $\LL$ be a linear equation of the form $a_1x_1+\dots+a_\ell x_\ell = b y$ where each $a_i \in \mathbb N$ and $b \in \mathbb N$ are fixed and $\ell\geq 2$.
Then $\LL$-\textsc{Free Subset Extension} is $\NP$-complete, and the problem is para-$\NP$-complete parameterised by $|B|$.
\end{prop}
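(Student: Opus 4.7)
The proof is essentially immediate once one observes that $\LL$-\textsc{Free Subset} is the special case of $\LL$-\textsc{Free Subset Extension} in which $B = \emptyset$, so most of the work has already been done in Theorem~\ref{npthm}.

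First, I would verify that $\LL$-\textsc{Free Subset Extension} belongs to $\NP$. The certificate is the set $A'$ itself; verification requires checking that $B \subseteq A' \subseteq A$, that $|A'| = k$, and that $A'$ is $\LL$-free. The first two checks are trivial, and by the discussion in Section~\ref{sec:complexity} the $\LL$-freeness of $A'$ can be tested in time polynomial in $\size{A'}$. Hence the problem lies in $\NP$.

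For $\NP$-hardness, I would reduce from $\LL$-\textsc{Free Subset}, which is $\NP$-hard by Theorem~\ref{npthm} under the stated hypotheses on $\LL$. Given an instance $(A,k)$ of $\LL$-\textsc{Free Subset}, define the corresponding instance of $\LL$-\textsc{Free Subset Extension} by simply setting $B := \emptyset$ and outputting the triple $(A, \emptyset, k)$. The two instances are manifestly equivalent: an $\LL$-free subset $A' \subseteq A$ of size $k$ exists if and only if there is such a subset containing the empty set. The reduction is clearly computable in polynomial time, so $\LL$-\textsc{Free Subset Extension} is $\NP$-complete.

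Finally, for para-$\NP$-completeness with respect to the parameterisation $|B|$, recall that a parameterised problem is para-$\NP$-hard provided that it is already $\NP$-hard when restricted to instances in which the parameter takes some fixed constant value. The reduction above produces instances with $|B| = 0$, so $\LL$-\textsc{Free Subset Extension} remains $\NP$-hard under the restriction $|B| = 0$; combined with membership in $\NP$ this yields para-$\NP$-completeness parameterised by $|B|$. There is no genuine obstacle here: once Theorem~\ref{npthm} is in hand, the only observation to make is that the trivial $B = \emptyset$ reduction delivers both statements simultaneously.
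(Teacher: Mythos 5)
Your proof is correct and takes essentially the same route as the paper, which likewise observes that setting $B=\emptyset$ gives an immediate reduction from $\LL$-\textsc{Free Subset} and invokes Theorem~\ref{npthm}. Your additional remarks on $\NP$-membership and on why a hard slice with $|B|=0$ yields para-$\NP$-completeness are accurate and simply make explicit what the paper leaves implicit.
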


Perhaps the most obvious parameterisation of this problem to consider is $k - |B|$, the number of elements we want to add to the set $B$.  It is straightforward to adapt our earlier results to demonstrate that, in the case of three-term equations, the problem is unlikely to  admit an fpt-algorithm with respect to this parameterisation.

\begin{prop}
Let $\LL$ be a linear equation of the form $a_1x_1 + a_2x_2 = by$, where $a_1,a_2,b \in \mathbb{N}$ are fixed.  Then $\LL$-\textsc{Free Subset Extension} is W[1]-hard, parameterised by $k - |B|$.
\end{prop}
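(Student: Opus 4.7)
The plan is to give an fpt-reduction from the standard parameterised independent set problem $p$-\textsc{Independent Set} (input: a graph $G$ and $k \in \mathbb{N}$; parameter: $k$; question: does $G$ have an independent set of size $k$?), which is well-known to be W[1]-hard, to $\LL$-\textsc{Free Subset Extension} parameterised by $k - |B|$. Since $\LL$ has the form $a_1 x_1 + a_2 x_2 = by$ with $a_1,a_2,b \in \mathbb{N}$, it satisfies the hypotheses of Lemma~\ref{useful1} with $\ell = 2$, so the construction of that lemma (and its Corollary~\ref{cor:is-reduction}) applies directly when the input hypergraph $H$ is simply a graph.

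Given an instance $(G,k)$ of $p$-\textsc{Independent Set} with $G = (V,E)$, I would first apply Lemma~\ref{useful1} with $H := G$ and the fixed equation $\LL$; this produces in polynomial time a set $A \subseteq \mathbb{N}$ partitioned as $A = A' \cup A''$ with $|A'| = |V|$, $|A''| = |E|$, together with the bijections $\phi_V$ and $\phi_E$. Observe that $A''$ is itself $\LL$-free: by Lemma~\ref{useful1}(3), every non-trivial solution $(x_1,x_2,y)$ to $\LL$ in $A$ requires $x_1,x_2 \in A'$, so no solution lies entirely in $A''$. I then output the instance $(A, B, k')$ of $\LL$-\textsc{Free Subset Extension}, where
\[
B := A'' \qquad \text{and} \qquad k' := |A''| + k.
\]

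Correctness is immediate from Corollary~\ref{cor:is-reduction}: there is a bijection between independent sets of $G$ of size $k$ and $\LL$-free subsets of $A$ of size $|A''| + k$ which contain all of $A''$, i.e.\ which extend $B$. Hence $(G,k)$ is a yes-instance of $p$-\textsc{Independent Set} if and only if $(A,B,k')$ is a yes-instance of $\LL$-\textsc{Free Subset Extension}. For the parameter bound, observe that
\[
k' - |B| = (|A''| + k) - |A''| = k,
\]
so the new parameter is literally equal to the original parameter; in particular it is bounded by a computable function of $k$, as required for an fpt-reduction. Combined with the polynomial-time construction granted by Lemma~\ref{useful1}, this establishes W[1]-hardness of $\LL$-\textsc{Free Subset Extension} parameterised by $k - |B|$.

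There is no real obstacle here: the heavy lifting has already been carried out in Lemma~\ref{useful1} and Corollary~\ref{cor:is-reduction}, and the only point to verify is that forcing $B = A''$ into the extension is consistent (which it is, since $A''$ is $\LL$-free) and that $k' - |B|$ matches the source parameter exactly (which it does by construction).
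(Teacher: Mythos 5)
Your proposal is correct and follows essentially the same route as the paper: a reduction from \textbf{p-}\textsc{Independent Set} using the construction of Lemma~\ref{useful1}, setting $B := A''$ and target size $|A''|+k$, with correctness supplied by Corollary~\ref{cor:is-reduction} and the new parameter equal to $k$. Your additional observation that $A''$ is itself $\LL$-free is a sensible sanity check but adds nothing beyond the paper's argument.
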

\begin{proof}
We prove this result by means of a reduction from the W[1]-complete problem \paramdec{Independent Set}, which is defined as follows.
\begin{framed}
\noindent \paramdec{Independent Set} \\
\textit{Input:} A graph $G$ and $k \in \mathbb{N}$.\\
\textit{Parameter:} $k$\\
\textit{Question:} Does $G$ contain an independent set of cardinality $k$?
\end{framed}
The W[1]-completeness of this problem can easily be deduced from that of \paramdec{Clique}, shown to be W[1]-complete in \cite{downey95}.

Let $(G,k')$ be the input to an instance of \paramdec{Independent Set}.  Once again, we rely on the construction in Lemma \ref{useful1} to give us the set $A$ in our instance of $\LL$-\textsc{Free Subset Extension}; we set $B := A''$ and $k := |B| + k'$ (so the parameter of interest in our instance of $\LL$-\textsc{Free Subset Extension} is equal to the solution size in the instance of \paramdec{Independent Set}).  By Corollary \ref{cor:is-reduction}, we know that there is a one-to-one correspondence between independent sets in $G$ of cardinality $k'$ and $\LL$-free subsets of $A$ of cardinality $|A''| + k'$ that contain $A''$, so it follows immediately that $(A,B,k)$ is a yes-instance for $\LL$-\textsc{Free Subset Extension} if and only if $(G,k')$ is a yes-instance for \paramdec{Independent Set}.  
\end{proof}

On the positive side, we observe that we can once again make use of fpt-algorithms for \textbf{p}-card-\textsc{Hitting Set} if we consider $\LL$-\textsc{Free Subset Extension} parameterised by the number of elements of $A$ that are \emph{not} included in the subset.  Note that the standard bounded search tree method for \textbf{p}-card-\textsc{Hitting Set} (or its counting version) \cite[Theorem 1.14]{flumgrohe} in fact gives an fpt-algorithm to find all hitting sets of size $k$.  As it is easy to check in time polynomial in $\size{A}$ whether a given hitting set in the hypergraph defined in the proof of Lemma \ref{lma:to-HS} contains a vertex corresponding to an element of $B$, we can use this method to count $\LL$-free subsets of $A$ that contain $B$ (and hence to decide whether there is at least one).

\begin{prop}
Let $\LL$ be any fixed linear equation. Then $\LL$-\textsc{Free Subset Extension}, parameterised by $|A|-k$, belongs to $\FPT$; the same is true for the counting version of the problem with this parameterisation.
\end{prop}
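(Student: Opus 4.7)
The plan is to mirror the strategy used for Theorem~\ref{thm:dual-param-dec} in Section~\ref{sec:param-dec}, with a small tweak to encode the constraint $B\subseteq A'$ into the hypergraph. Given an instance $(A,B,k)$, I would first apply Lemma~\ref{lma:to-HS} in polynomial time to obtain a hypergraph $G=(A,E)$ in which every edge has size at most $\ell$ (the number of variables of $\LL$), and such that $A'\subseteq A$ is $\LL$-free if and only if its complement $A\setminus A'$ is a hitting set of $G$. Under this bijection, $\LL$-free extensions $A'$ of $B$ with $|A'|=k$ correspond exactly to hitting sets of $G$ of cardinality $|A|-k$ that are disjoint from $B$.

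Next, I would pre-process $G$ into an auxiliary hypergraph $G^{*}=(A\setminus B,\,E^{*})$ by setting $E^{*}:=\{e\setminus B : e\in E\}$. If some $e\in E$ satisfies $e\subseteq B$, then $B$ already contains a non-trivial solution to $\LL$, so no $\LL$-free superset of $B$ can exist and the algorithm can immediately output \textsc{no} (respectively $0$). Otherwise every edge of $G^{*}$ has size between $1$ and $\ell$, and it is immediate from the definition that a set $H\subseteq A\setminus B$ is a hitting set of $G^{*}$ if and only if $H$ is a hitting set of $G$ disjoint from $B$. Consequently, hitting sets of $G^{*}$ of cardinality $s:=|A|-k$ are in bijection with the $\LL$-free subsets of $A$ of size $k$ containing $B$.

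Finally, I would invoke the fpt-algorithm of \cite[Theorem 1.14]{flumgrohe} on input $(G^{*},s)$ to decide the existence of such a hitting set, and Thurley's fpt-algorithm~\cite{thurley07} for \#\textbf{p}-card-\textsc{Hitting Set} to compute the number of them. In either case the relevant parameter is $s+d$, where $d\leq \ell$ is the maximum edge size of $G^{*}$; since $\ell$ is a fixed constant, this is bounded by $(|A|-k)+\mathcal{O}(1)$, and the whole procedure runs in fpt-time with respect to $|A|-k$. The main obstacle, such as it is, lies in verifying the bijection described in the second step, which is a routine bookkeeping exercise; all the heavy lifting is performed by Lemma~\ref{lma:to-HS} and the known fpt-algorithms for the hypergraph problems.
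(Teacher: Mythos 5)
Your reduction is correct, and it follows the same basic strategy as the paper (pass through the hypergraph of Lemma~\ref{lma:to-HS} and hand the complement problem to the known fpt-algorithms for \textbf{p}-card-\textsc{Hitting Set} and its counting version). The difference is in how the constraint $B\subseteq A'$ is enforced. The paper keeps the hypergraph $G$ unchanged and instead observes that the bounded search tree method can be made to produce all hitting sets of size $|A|-k$, after which one discards those meeting $B$; this requires trusting that the enumeration/filtering step can be organised in fpt-time, which is somewhat delicate to state precisely since the raw number of hitting sets of a given size need not be bounded by a function of the parameter. You instead restrict the vertex set to $A\setminus B$ and replace each edge $e$ by $e\setminus B$ (rejecting immediately if some edge lies inside $B$, i.e.\ if $B$ is not $\LL$-free); since any hitting set avoiding $B$ meets $e$ exactly where it meets $e\setminus B$, this gives a parsimonious reduction to \textbf{p}-card-\textsc{Hitting Set} on $G^*$ with parameter $(|A|-k)+d$, $d\le\ell$, and both the decision and counting algorithms can then be used entirely as black boxes. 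Your version is cleaner and self-contained; the only bookkeeping worth making explicit is that distinct edges of $G$ may collapse to the same edge of $G^*$, which is harmless because the hitting condition is unaffected.
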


Finally, we consider parameterising simultaneously by the number of elements we wish to add to $B$ and the size of the set $B$; this is equivalent to parameterising by $k$, the total size of the desired $\LL$-free subset.

\begin{prop}
Let $\LL$ be a linear equation of the form $a_1x_1 + a_2x_2 = by$, where $a_1,a_2,b \in \mathbb{N}$ are fixed and $a_1 + a_2 \neq b$.  Then $\LL$-\textsc{Free Subset Extension}, parameterised by $k$, belongs to $\FPT$.
\end{prop}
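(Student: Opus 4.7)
The plan is to combine a brute-force routine for small instances with a density argument, based on Theorem~\ref{lamthm2}, that shows every sufficiently large input is automatically a yes-instance.  First, check in polynomial time that $B$ is $\LL$-free; otherwise output NO.  Because $a_1+a_2\neq b$, the equation admits no trivial solutions at all, so any triple $(x_1,x_2,y)$ satisfying $\LL$ is non-trivial, and in particular $0\notin B$.  Next, compute the set $C$ of those $a\in(A\setminus B)\setminus\{0\}$ for which $B\cup\{a\}$ is still $\LL$-free.  A straightforward role-by-role count (fix a position for $a$ and let the other two positions range over $B$; at most one value of $a$ is admissible per choice) shows $|(A\setminus B)\setminus C|=O(|B|^2)=O(k^2)$.

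The heart of the argument is the following dichotomy.  Form the ``bad-pair graph'' $G$ on vertex set $C$, declaring $\{a,a'\}$ to be an edge iff for some $y\in B$ and some assignment of $a$, $a'$ and $y$ to the three positions of $\LL$ one obtains a solution.  An analogous count shows each vertex of $G$ has degree at most $\Delta=O(|B|)=O(k)$.  Fix a threshold $g(k):=c_1 k^2/\lambda(\LL)$ for a sufficiently large explicit constant $c_1$, where $\lambda(\LL)$ is from Theorem~\ref{lamthm2}.  If $|C|>g(k)$, greedily build an independent set $I$ in $G$ of size at least $|C|/(\Delta+1)\geq c_2|C|/k$, and apply Theorem~\ref{lamthm2} to $I\subseteq\mathbb{Z}\setminus\{0\}$ to obtain an $\LL$-free subset $I'\subseteq I$ with $|I'|\geq \lambda(\LL)|I|\geq k$.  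Any $(k-|B|)$-subset $X\subseteq I'$ then witnesses a YES instance: solutions using exactly one element of $X$ (and otherwise elements of $B$) are forbidden by $X\subseteq C$; solutions using two distinct elements of $X$ together with an element of $B$ are forbidden by the independence of $X$ in $G$; and solutions lying entirely inside $X$ (with or without a repeated element) are forbidden by the $\LL$-freeness of $I'\supseteq X$.

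If on the other hand $|C|\leq g(k)$, then $|A|\leq |B|+|C|+O(k^2)=O(k^2)$, so we brute-force over all $(k-|B|)$-subsets $X\subseteq C$ and test each $B\cup X$ for $\LL$-freeness in total time $\binom{|C|}{k-|B|}\cdot\mathrm{poly}(\size{A})\leq f(k)\cdot\mathrm{poly}(\size{A})$ for a computable $f$.  The main obstacle is the bounded-degree claim for $G$: this is precisely where the three-variable hypothesis matters, since for a pair $\{a,a'\}$ to form a bad edge we need only one additional element of $B$ and a constant number of role assignments, whereas with more variables the number of ways a fixed pair could be extended to a bad configuration could grow with $|B|$ to a higher power, and the rest of the argument would have to be modified accordingly.
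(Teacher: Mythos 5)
Your proposal is correct and follows essentially the same route as the paper's proof: discard elements that individually conflict with $B$, observe that each remaining element participates in $O(|B|)$ ``bad'' configurations with $B$ (your bounded-degree bad-pair graph is exactly the paper's count of at most $6|B|$ bad triples per element), greedily extract an independent set of size $\Omega(|C|/k)$, and apply Theorem~\ref{lamthm2} to it, with a brute-force fallback when $|C|$ is bounded by a function of $k$. The only differences are cosmetic (explicit graph language and slightly different threshold constants), so there is nothing substantive to add.
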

\begin{proof}
Let $(A,B,k)$ be an instance of $\LL$-\textsc{Free Subset Extension}.  If $k < |B|$ then this is necessarily a no-instance, so we may assume without loss of generality that $k \geq |B|$; we may also assume that $B$ is $\LL$-free (we can check this in polynomial time and if $B$ contains a solution to $\LL$ we immediately return NO).

As a first step in our algorithm, we delete from $A$ every $a \in A \setminus B$ such that $B \cup \{a\}$ is not $\LL$-free: note that this does not change the number of $\LL$-free subsets containing $B$, as such an $a$ cannot belong to any set of this kind.  We call the resulting set $A_1$, and note that we can construct $A_1$ in time polynomial in $\size{A}$. Note that no set containing $0$ can be $\LL$-free, so we know that $0 \not \in A_1$. 

Fix the constant $\lambda = \lambda(\LL)$ as in equation \eqref{quote}.  Our algorithm proceeds as follows.  
If $|A_1| < \displaystyle \left(\frac{6|B| + 1}{\lambda}\right)(k - |B|) + |B|$, we exhaustively consider all $k$-element subsets of $A_1$ and check if they form an $\LL$-free subset;
if $|A_1| \geq \displaystyle \left(\frac{6|B| + 1}{\lambda}\right)(k - |B|) + |B|$ then we return YES.  To see that this is an fpt-algorithm, note that, if $|A_1| < \displaystyle \left(\frac{6|B| + 1}{\lambda}\right)(k - |B|) + |B|$, then $|A_1| = \mathcal{O}(k^2)$, so we can perform the exhaustive search in time depending only on $k$.  It is clear that we will return the correct answer whenever we perform the exhaustive search; in order to prove correctness of the algorithm, it remains to show that, if $|A_1| \geq \displaystyle \left(\frac{6|B| + 1}{\lambda}\right)(k - |B|) + |B|$, then we must have a yes-instance.

To see that this is true, we first prove that there exists a large set $A_2 \subseteq A_1$ such that $B \subseteq A_2$ and no solution to $\LL$ in $A_2$ involves an element of $B$.  We will call a triple $(x,y,z )\in A_1^3$ \emph{bad} if $a_1x + a_2y = bz$.  By construction of $A_1$, note that every bad triple contains at most one element of $B$.  We aim to bound the number of bad triples containing some fixed $u \in A_1 \setminus B$ and at least one element of $B$.  First, we also fix $v \in B$, and bound the number of bad triples which contain both $u$ and $v$.  If we fix the positions of $u$ and $v$ in a triple, there is at most one $w \in A_1 \setminus B$ such that $w$ completes the triple; as there are 6 options for the choice of positions of $u$ and $v$ in the triple, this means there are in total at most $6$ bad triples involving both $u$ and $v$.  Summing over all possibilities for $v$, we see that there are at most $6|B|$ bad triples involving any fixed $u \in A_1 \setminus B$ and at least one element of $B$.

We can therefore greedily construct a set $C\subseteq A_1 \setminus B$ of size at least $\frac{|A_1| - |B|}{6|B|+1}$ such that every bad triple in $B \cup C$ is entirely contained in $C$.  
Indeed, initially set $C:=\emptyset$ and $A':=A_1\setminus B$. Move an arbitrary element $u$ of $A'$ to $C$ and delete all elements of $A'$ that form a bad triple with $u$ and at least one element of $B$; by the reasoning above, this involves deleting at most $6|B|$ elements of $A'$. Repeat this process until $A'=\emptyset$, and note that $C$ is as desired; set $A_2 := B \cup C$.

Now observe that every $\LL$-free subset of cardinality $k - |B|$ in $C$ can be extended to an $\LL$-free subset of $A_2$ of cardinality $k$ which contains $B$.  We know from Theorem \ref{lamthm2} that there exists an $\LL$-free subset $C' \subseteq C$ of cardinality at least $\lambda |C|$ (where $\lambda$ is the constant defined in equation (\ref{quote})).  $B \cup C'$ is then an $\LL$-free subset of $A$, and has cardinality at least 
$$|B| + \lambda \left(\frac{|A_1|-|B|}{6|B|+1}\right) \geq |B| + \lambda\left(\frac{\left(\frac{6|B| + 1}{\lambda}\right)(k - |B|) + |B| - |B|}{6|B| + 1}\right) = k,$$
so we should indeed return YES.
\end{proof}

We note that the argument used in this proof can be adapted to demonstrate the existence of an FPTRAS for the counting version of this problem, using the ideas from Lemma \ref{lma:dense-approx}.  However, there is unlikely to be an fpt-algorithm to solve the counting version exactly, as we can easily reduce \countprob to this problem (with the same parameter) by setting $B = \emptyset$.

\section{Conclusions and open problems}

We have shown that the basic problem of deciding whether a given input set $A \subseteq \mathbb{Z}$ contains an $\LL$-free subset of size at least $k$ is $\NP$-complete when $\LL$ is any linear equation of the form $a_1x_1 + \cdots + a_{\ell}x_{\ell} = by$ (with $a_i,b \in \mathbb{N}$ and $\ell \geq 2$), although the problem is solvable in polynomial time whenever $\LL$ is a linear equation with only two variables.  We also demonstrated that the maximisation version of the problem is $\APX$-hard for equations $\LL$ of the form $a_1x_1 + a_2x_2 = by$ (with $a_1,a_2,b \in \mathbb{N}$).  

Two natural questions arise from these results.  First of all, in our $\NP$-hardness reduction, we construct a set $A$ where $\max(A)$ is exponential in terms of $|A|$: is this problem in fact \emph{strongly} $\NP$-complete, so that it remains hard even if all elements of $A$ are bounded by some polynomial function of $|A|$?  Secondly, can either the $\NP$-completeness proof or the $\APX$-hardness proof be generalised to other linear equations $\LL$?  A natural starting point for an equation that is not covered by Theorem \ref{npthm} would perhaps be the case of Sidon sets (i.e. $x+y=z+w$).

On the positive side, we saw that the decision problem belongs to $\FPT$ for any homogeneous non-translation-invariant equation $\LL$ when parameterised by the cardinality of the desired $\LL$-free subset, and that it belongs to $\FPT$ for any linear equation $\LL$ with respect to the dual parameterisation (the number of elements of $A$ not included in the $\LL$-free subset).  While we have considered two natural parameterisations here, there is another natural parameterisation that we have not considered.  We know that, for certain linear equations, there is  some function $c^*_{\LL}$ such that every set $A \subseteq \mathbb{Z}$ is certain to contain an $\LL$-free subset of cardinality at least $c^*_{\LL}(|A|)$.  It is therefore natural to consider the complexity parameterised above this lower-bound: what is the complexity of determining whether a given subset $A \subseteq \mathbb{Z}$ contains an $\LL$-free subset of cardinality at least $c^*_{\LL}(|A|) + k$, where $k$ is taken to be the parameter?  The main difficulty in addressing this question is that the exact value of $c^*_{\LL}$ is not known for any linear equations $\LL$: even in the case of sum-free subsets, we only know that the bound on $c^*_{\LL}$ is of the form $\frac{|A|}{3} + o(|A|)$.

We also considered the complexity of determining whether a set $A \subseteq \mathbb{Z} \setminus \{0\}$ contains an $\LL$-free subset containing a fixed proportion $\eps$ of its elements.  We demonstrated that this problem is also $\NP$-complete for the case of sum-free sets, and also for $\LL$-free sets whenever $\LL$ is a $3$-variable translation-invariant linear equation.  It would be interesting to investigate how far these results can be generalised to other linear equations: given any non-translation-invariant, homogeneous linear equation $\LL$ and any rational $\kappa (\LL) <\eps<1$, is $\eps$-$\LL$-\textsc{Free Subset} $\NP$-complete?

Concerning the complexity of counting $\LL$-free subsets, we have addressed the problem of counting $\LL$-free subsets containing exactly $k$ elements.  For equations $\LL$ covered by the $\NP$-hardness result of Theorem \ref{npthm}, even approximate counting is hard: there is no FPRAS for arbitrary $k$ unless $\NP=\RP$ (as if we could count approximately we could, with high probability, determine whether there is at least one $\LL$-free subset of size $k$).  We also considered the complexity of this problem parameterised separately by $k$ and $|A|-k$.

However, there are other natural counting problems we have not addressed here.  For example, we might want to count the total number of $\LL$-free subsets of \emph{any} size; here the decision problem (``Is there an $\LL$-free subset of any size?'') is trivial, so there is no immediate barrier to an efficient counting algorithm.  Alternatively, we might want to count the total number of \emph{maximal} $\LL$-free sets.  Our results do not have any immediate consequences for either of these problems, but the corresponding counting versions of the extension problem are necessarily \#P-complete: by Corollary \ref{cor:is-reduction}, we have a polynomial-time reduction to this problem from that of counting all (maximal) independent sets in an arbitrary $\ell$-uniform hypergraph; for $\ell = 2$ and $\ell=3$ this problem was shown to be \#P-complete by Greenhill \cite{greenhill00}, and we can easily add further dummy vertices to each edge (and then require that the elements corresponding to the dummy vertices are included in our $\LL$-free subset) to deal with larger values of $\ell$.

{\footnotesize \obeylines \parindent=0pt
\begin{tabular}{lll}
Kitty Meeks 									&\ & Andrew Treglown\\
School of Computing Science		&\ & School of Mathematics\\
Sir Alwyn Williams Building		&\ & University of Birmingham\\
University of Glasgow					&\ & Edgbaston\\
Glasgow												&\ & Birmingham\\
G12 8QQ												&\ & B15 2TT\\
UK 														&\ & UK
\end{tabular}
}
\begin{flushleft}
{\emph{E-mail addresses}:
\tt{kitty.meeks@glasgow.ac.uk, a.c.treglown@bham.ac.uk}}
\end{flushleft}

\end{document}